\documentclass[a4paper, 10pt]{amsart}
\textwidth16.1cm \textheight21cm \oddsidemargin-0.1cm
\evensidemargin-0.1cm
\usepackage{amsmath}
\usepackage{amssymb, color, hyperref}

\theoremstyle{plain}
\newtheorem{theorem}{\bf Theorem}[section]
\newtheorem{proposition}[theorem]{\bf Proposition}
\newtheorem{lemma}[theorem]{\bf Lemma}
\newtheorem{corollary}[theorem]{\bf Corollary}

\theoremstyle{definition}

\newcommand{\N}{\mathbb N}
\newcommand{\Z}{\mathbb Z}

 \DeclareMathOperator{\ord}{ord}

 \DeclareMathOperator{\supp}{supp}

\renewcommand{\t}{\, | \,}

\numberwithin{equation}{section}

\begin{document}

\title{On the incomparability of systems of sets of lengths}

\address{Institute for Mathematics and Scientific Computing\\ University of Graz, NAWI Graz\\ Heinrichstra{\ss}e 36\\ 8010 Graz, Austria}
\email{alfred.geroldinger@uni-graz.at}
\urladdr{https://imsc.uni-graz.at/geroldinger}

\address{Laboratoire Analyse, G{\'e}om{\'e}trie et Applications, LAGA, Universit{\'e} Sorbonne Paris Nord, CNRS, UMR 7539, F-93430, Villetaneuse, France
 \\ and \\ Laboratoire Analyse, G{\'e}om{\'e}trie et Applications (LAGA, UMR 7539) \\ COMUE  Universit{\'e} Paris Lumi{\`e}res \\  Universit{\'e} Paris 8, CNRS \\  93526 Saint-Denis cedex, France} \email{schmid@math.univ-paris13.fr}

\author{Alfred Geroldinger  and Wolfgang A. Schmid}

\thanks{This work was supported by the Austrian Science Fund FWF, Project Number P33499.}

\keywords{Krull monoids,   transfer Krull monoids, sets of lengths, zero-sum sequences}

\subjclass[2010]{11B30,  13A05, 13F05, 20M13}

\begin{abstract}
Let $H$ be a Krull monoid with finite class group $G$ such that every class contains a prime divisor. We consider the system $\mathcal L (H)$ of all sets of lengths of $H$ and study when $\mathcal L (H)$ contains or is contained in a system $\mathcal L (H')$ of a Krull monoid $H'$ with finite class group $G'$, prime divisors in all classes and Davenport constant $\mathsf D (G')=\mathsf D (G)$. Among others, we show that if $G$ is either cyclic of order $m \ge 7$ or an elementary $2$-group of rank $m-1 \ge 6$, and $G'$ is any group which is non-isomorphic to $G$ but with Davenport constant $\mathsf D (G')=\mathsf D (G)$, then the systems $\mathcal L (H)$ and $\mathcal L (H')$ are incomparable.
\end{abstract}

\maketitle


\section{Introduction} \label{1}

Let $H$ be a Krull monoid or a Krull domain with class group $G$ such that every class contains a prime divisor. The system $\mathcal L (H)$ of all sets of lengths of $H$ is a well-studied invariant describing  factorizations in $H$. It is classic that  $H$ is factorial if and only if $|G|=1$ and that $H$ is half-factorial (i.e., $|L|=1$ for all $L \in \mathcal L (H)$) if and only if $|G|\le 2$.  All sets of lengths $L \in \mathcal L (H)$ are finite and, if $|G| \ge 3$, then for every $N \in \N$ there is  $L_N \in \mathcal L (H)$ such that $|L_N|\ge N$. Every finite subset of $\N_{\ge 2}$ lies in $\mathcal L (H)$ if and only if the class group $G$ is infinite. Suppose that $G$ is finite with $|G| \ge 3$. Then sets of lengths in $\mathcal L (H)$ are well-structured and depend only on the class group $G$. More precisely,  we have $\mathcal L (H) = \mathcal L (G) := \mathcal L \big( \mathcal B (G) \big)$, where $\mathcal B (G)$ denotes the monoid of zero-sum sequences over   $G$. We refer to \cite{Ge-HK06a, Ge-Ru09} for background on Krull monoids and their connection to additive combinatorics and to \cite{Sm13a, Fr-Na-Ri19a, Fa-Tr18a, Go19a, Tr19a} for recent work on  sets of lengths.

The standing conjecture is that the system $\mathcal L (H)$ determines the class group $G$ (apart from two trivial exceptions, given in Theorem A below).  This means that, if $H'$ is a Krull monoid with class group $G'$ such that every class contains a prime divisor, then $\mathcal L (H) = \mathcal L (H')$ if and only if the class groups $G$ and $G'$ are isomorphic. For small groups it is easy to write down their systems of sets of lengths. We denote by $C_n$ a cyclic group of order $n \in \N$ and recall the following well-known result (\cite[Proposition 3.3 and Theorem 3.6]{Ge-Sc-Zh17b}).

\medskip
\noindent
{\bf Theorem A.}

\begin{enumerate}
\item $\big\{ \{k\} \colon k \in \N_0 \big\}    = \mathcal L (C_1)=\mathcal L (C_2)$.

\item $\bigl\{ y
      + 2k + [0, k] \, \colon \, y,\, k \in \N_0 \bigr\} = \mathcal L (C_3)= \mathcal L (C_2 \oplus C_2) \subsetneq \mathcal L (G)$ for all finite abelian groups $G$ with $\mathsf D (G) \ge 4$.
\end{enumerate}

The above result covers all groups $G$ with Davenport constant $\mathsf D (G) \le 3$, and the above mentioned conjecture expects an affirmative answer to the following problem.

\medskip
\noindent
{\bf The Characterization Problem.} Let $G$ be a finite abelian group with Davenport constant $\mathsf D (G) \ge 4$, and let $G'$ be an abelian group with $\mathcal L (G) = \mathcal L (G')$. Are $G$ and $G'$ isomorphic?

\medskip
We refer to the surveys (\cite{ Ge16c, Ge-Zh20a}) for background on the Characterization Problem and to \cite{B-G-G-P13a, Zh18a, Zh18b, Ge-Sc19a, Ge-Zh17b} for recent progress. In \cite{Ge-Sc-Zh17b}, the Characterization Problem was studied with a new approach, and in the present paper we   further pursue this novel point of view. Indeed,  we consider the family
\[
\Omega = \big( \mathcal L (G) \big)_G
\]
of systems of sets of lengths $\mathcal L (G)$, where $G$ is running through a set of representatives of non-isomorphic finite abelian groups.  If
$G'$ is a subgroup of a group $G$, then $\mathcal L (G') \subset \mathcal L (G)$ and, given two groups $G_1$ and $G_2$, we have
$\mathcal L (G_i) \subset \mathcal L (G_1 \oplus G_2)$ for each $i \in [1,2]$. Thus $\Omega$ is a directed family and, by Theorem A and \cite[Theorem 7.4.1]{Ge-HK06a}, we have
\[
\mathcal L (C_1) = \mathcal L (C_2) \subsetneq \mathcal L (C_3) = \mathcal L (C_2 \oplus C_2) \subsetneq \mathcal L (G) \subsetneq \mathcal L (G^*)
\]
for every finite abelian group $G$ with $\mathsf D (G) \ge 4$ and every infinite abelian group $G^*$.

If $G$ and $G'$ are  finite abelian groups with $\mathcal L (G) = \mathcal L (G')$, then their Davenport constants are equal (Proposition \ref{2.3}). Furthermore, for every positive integer $m \in \N$ there are only finitely many non-isomorphic finite abelian groups $G$ with Davenport constant $\mathsf D (G)=m$. We consider,  for every $m \in \N$, the finite family
\[
\Omega_m = \big( \mathcal L (G) \big)_{G \ \text{with} \  \mathsf D (G)=m}
\]
of all systems $\mathcal L (G)$, where $G$ is running through a set of representatives of non-isomorphic finite abelian groups $G$ having Davenport constant $\mathsf D (G)=m$. Thus, the Characterization Problem has an affirmative answer if and only if the systems in $\Omega_m$ are pairwise distinct for all $m \ge 4$. We say that an element $\mathcal L (G)$ of a subfamily $\Omega'$ of $\Omega$ is
\begin{itemize}
\item {\it maximal} in $\Omega'$ if $\mathcal L (G) \subset \mathcal L (G')$  implies that $G=G'$ for every element $\mathcal L (G')$ in $\Omega'$,

\item {\it minimal} in $\Omega'$ if $\mathcal L (G') \subset \mathcal L (G)$ implies that $G=G'$ for every element $\mathcal L (G')$ in $\Omega'$, and

\item {\it incomparable} in $\Omega'$ if it is maximal and minimal in $\Omega'$, that is $\mathcal L (G)$ is not comparable to any other element of $\Omega'$.
\end{itemize}

Now we can  formulate the main result of the present paper. By Theorem {\bf A}, it is sufficient to consider finite abelian groups $G$ with Davenport constant $\mathsf D (G) \ge 4$.

\begin{theorem} \label{1.1}~

\begin{enumerate}
\item For $m \in [4,6]$, $\mathcal L (C_m)$ is minimal in $\Omega_m$, $\mathcal L (C_2^{m-1})$ is maximal in $\Omega_m$, and $\mathcal L (C_m) \subsetneq \mathcal L (C_2^{m-1})$.

\item For every $m \ge 7$, $\mathcal L (C_m)$ is incomparable  in $\Omega_m$ and $\mathcal L (C_2^{m-1})$ is incomparable  in $\Omega_m$.

\item For every $m \ge 5$, $\mathcal L (C_2^{m-4}\oplus C_4)$ is maximal in $\Omega_m$.

\item For every $n \ge 2$,   $\mathcal L (C_2 \oplus C_{2n})$ is maximal in $\Omega_{2n+1}$. Moreover, $\mathcal L (C_2 \oplus C_{2n})$ is minimal among all $\mathcal L (G)$ in $\Omega_{2n+1}$ stemming from groups $G$ with $\mathsf D (G) = \mathsf D^* (G)$.
\end{enumerate}
\end{theorem}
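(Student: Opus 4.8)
The plan is to handle the maximality assertion and the conditional minimality assertion separately, but with a common mechanism. For Krull monoids with finite class group one reads off from a system $\mathcal L(G)$ two kinds of data that are monotone under an inclusion $\mathcal L(G) \subseteq \mathcal L(G')$: first the Davenport constant (via the elasticities, since $\rho(G) = \mathsf D(G)/2$), which in our situation is fixed to be $2n+1$ because both groups lie in $\Omega_{2n+1}$; and secondly the largest $d \in \N$ such that, for every $\ell$, some $L \in \mathcal L(G)$ contains an arithmetical progression of difference $d$ and length at least $\ell$. By the structure theory of sets of lengths (the Structure Theorem together with the relevant realization results) this latter quantity equals $\max \Delta^*(G)$, and one has $\max \Delta^*(G) = \max\{\exp(G) - 2,\, \mathsf r(G) - 1\}$. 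Since $\exp(C_2 \oplus C_{2n}) = 2n$, $\mathsf r(C_2 \oplus C_{2n}) = 2$ and $\mathsf D(C_2 \oplus C_{2n}) = \mathsf D^*(C_2 \oplus C_{2n}) = 2n+1$, the relevant values for our group are $\max \Delta^* = 2n-2$ and $\mathsf D = 2n+1$.

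\emph{Maximality.} Suppose $G'$ is a finite abelian group with $\mathsf D(G') = 2n+1$ and $\mathcal L(C_2 \oplus C_{2n}) \subseteq \mathcal L(G')$. Then $\max\{\exp(G') - 2, \mathsf r(G') - 1\} \ge 2n-2$, that is $\exp(G') \ge 2n$ or $\mathsf r(G') \ge 2n-1$. Feeding this into $2n+1 = \mathsf D(G') \ge \mathsf D^*(G') = 1 + \sum_i (n_i' - 1)$, where $C_{n_1'} \oplus \dots \oplus C_{n_{r'}'}$ with $n_1' \mid \dots \mid n_{r'}'$ is the invariant factor decomposition of $G'$, a short case analysis shows that the only possibilities are $G' \cong C_{2n+1}$, $G' \cong C_2 \oplus C_{2n}$ and $G' \cong C_2^{2n}$. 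It then remains to rule out $C_{2n+1}$ and $C_2^{2n}$; note these are \emph{not} excluded by the invariants above, since $\max \Delta^*(C_{2n+1}) = \max \Delta^*(C_2^{2n}) = 2n-1 \ge 2n-2$. For this one produces, for each $n \ge 2$, a set of lengths $L \in \mathcal L(C_2 \oplus C_{2n})$ with $L \notin \mathcal L(C_{2n+1})$ and another with $L \notin \mathcal L(C_2^{2n})$; natural candidates come from zero-sum sequences over a basis $(e_1, e_2)$ of $C_2 \oplus C_{2n}$ in which the element of order $2$ and the element of order $2n$ genuinely interact, and one checks non-realizability over $C_{2n+1}$, respectively over $C_2^{2n}$, using the explicit descriptions of $\mathcal L(C_{2n+1})$ and $\mathcal L(C_2^{2n})$ and the separation arguments already developed for parts (1)--(3).

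\emph{Conditional minimality.} Now let $G$ be a finite abelian group with $\mathsf D(G) = \mathsf D^*(G) = 2n+1$ and $\mathcal L(G) \subseteq \mathcal L(C_2 \oplus C_{2n})$. Monotonicity gives $\max\{\exp(G) - 2, \mathsf r(G) - 1\} \le 2n-2$, so $\exp(G) \le 2n$ and $\mathsf r(G) \le 2n-1$; in particular $C_{2n+1}$ and $C_2^{2n}$ are already excluded here. If $\exp(G) = 2n$, then $\mathsf D^*(G) = 2n+1$ forces the invariant factor decomposition of $G$ to be $C_2 \oplus C_{2n}$, as desired. If $\exp(G) \le 2n-1$, then $1 + \sum_i (n_i - 1) = 2n+1$ together with $\mathsf r(G) \le 2n-1$ leaves only finitely many groups, most prominently the rank-two groups $C_d \oplus C_{(2n+2)-d}$ with $d \mid 2n+2$ and $3 \le d \le n+1$, and, for suitable $n$, a few higher-rank groups such as $C_3 \oplus C_3 \oplus C_{2n-3}$ (for instance $C_3^3$ when $n = 3$). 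Each of these must be excluded by showing that its system contains a set of lengths not lying in $\mathcal L(C_2 \oplus C_{2n})$ — e.g.\ by comparing the full sets $\Delta^*$ of differences that admit arbitrarily long arithmetical progressions inside the systems, or, where this is insufficient, directly via explicit separating zero-sum sequences.

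The combinatorial bookkeeping that narrows the candidate group down to a short explicit list is routine. The essential difficulty in both halves is the last step: constructing the separating sets of lengths and verifying that they cannot occur over the competing groups. This is precisely the point where the detailed analysis of zero-sum sequences over $C_2 \oplus C_{2n}$ and over $C_{2n+1}$, $C_2^{2n}$ and the small rank-two groups $C_d \oplus C_{(2n+2)-d}$ is needed, building on the results and techniques assembled in the earlier sections and in parts (1)--(3).
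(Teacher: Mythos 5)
Your proposal only addresses part 4 of the theorem (taking parts 1--3 as given, which is legitimate since the paper also proves the four parts independently), so I will assess it as a proof of part 4. The set-up is sound: monotonicity of $\mathsf D$ and of $\max \Delta^*$ under inclusion of systems is a valid tool, and your reduction of the candidate groups to $C_{2n+1}$, $C_2\oplus C_{2n}$, $C_2^{2n}$ in the maximality half, and to $C_2\oplus C_{2n}$ together with a list of rank-two groups $C_{n_1}\oplus C_{n_2}$ with $n_1+n_2=2n+2$ (plus a few higher-rank groups) in the minimality half, is correct. But the proof stops exactly where the real content begins: in both halves the decisive step is ``produce a separating set of lengths and verify non-realizability over the competitor,'' and you only assert that such sets exist. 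That is a genuine gap, not bookkeeping. For the maximality half it is a small gap: the single set $\{2,\mathsf D(G)-1,\mathsf D(G)\}$ does the whole job, since it lies in $\mathcal L(G)$ if and only if $G\cong C_2\oplus C_{2n}$ (Lemma \ref{5.1}, quoting \cite{Ge-Zh15b}); with that lemma in hand your $\Delta^*$-narrowing is not even needed, and without it you are left with $C_{2n+1}$ and $C_2^{2n}$ unexcluded.

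For the conditional minimality half the gap is substantial, and your suggested fallback (``comparing the full sets $\Delta^*$'') will not close it: groups such as $C_4\oplus C_{2n-2}$ have $\max\Delta^*$ strictly smaller than $2n-2$, so no $\Delta^*$-type invariant separates them from $C_2\oplus C_{2n}$ from below. What the paper actually needs here is Proposition \ref{5.3}, a \emph{complete} determination of all $L\in\mathcal L(C_2\oplus C_{2n})$ with $\{2,\mathsf D(G)\}\subset L$ (which in turn rests on the classification of minimal zero-sum sequences of maximal length over $C_2\oplus C_{2n}$, Lemma \ref{5.2}), and then a genuinely case-split argument: for rank $r\ge 3$ one shows $2n\notin\mathsf L(U(-U))$ for $U=e_1^{n_1-1}\cdots e_r^{n_r-1}(e_1+\cdots+e_r)$; for $r=2$ with $n_1$ odd one uses the parity of the second-largest element of $\{2,n_1,n_2,n_1+n_2-2,n_1+n_2-1\}$; for $n_1=n_2$ even one compares $\rho_3$ via \cite{Ge-Gr-Yu15}; and for $n_1=4$ and for $n_1\ge 6$ even with $n_1\ne n_2$ one constructs bespoke atoms $V$ and computes $\mathsf L(V(-V))$ explicitly to contradict Proposition \ref{5.3}. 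Note also that your candidate list grows with the number of divisors of $2n+2$, so ``exclude each one'' requires uniform arguments of exactly this kind, not a finite check. None of this is present in the proposal, so as written it is a correct strategy outline rather than a proof.
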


\begin{corollary} \label{1.2}~
Let $G_1$ and $G_2$ be  non-isomorphic finite abelian groups with $\mathsf D (G_1)=\mathsf D (G_2)=m \in [4,7]$.
\begin{enumerate}
\item If $m \in [4,6]$ and $\mathcal L (G_1) \subset \mathcal L (G_2)$, then $G_1$ is cyclic of order $m$, $G_2$ is an elementary $2$-group of rank $m-1$, and $\mathcal L (G_1) \subsetneq \mathcal L (G_2)$.

\item If $m=7$, then $\mathcal L (G_1)$ and $\mathcal L (G_2)$ are incomparable.
\end{enumerate}
\end{corollary}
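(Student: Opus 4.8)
The plan is to derive this from Theorem~\ref{1.1} together with the classification of finite abelian groups of Davenport constant at most $7$. First I would list, for each $m\in[4,7]$, the finitely many isomorphy classes of finite abelian groups $G$ with $\mathsf D(G)=m$. Using that $\mathsf D(G)=\mathsf D^*(G)=1+\sum_{i=1}^r(n_i-1)$ whenever $G\cong C_{n_1}\oplus\dots\oplus C_{n_r}$ (with $1<n_1\mid\dots\mid n_r$) is a $p$-group or has rank at most $2$ \cite{Ge-HK06a}, an elementary computation yields $\Omega_4=\{C_4,C_2^3\}$, $\Omega_5=\{C_5,C_2\oplus C_4,C_3^2,C_2^4\}$, $\Omega_6=\{C_6,C_2^2\oplus C_4,C_2^5\}$, and $\Omega_7=\{C_7,C_2\oplus C_6,C_4^2,C_3^3,C_2^3\oplus C_4,C_2^6\}$; moreover every group occurring here satisfies $\mathsf D(G)=\mathsf D^*(G)$.

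Next I would read off from Theorem~\ref{1.1} the maximality, minimality and incomparability information available for these groups. For $m\in[4,6]$: $\mathcal L(C_m)$ is minimal and $\mathcal L(C_2^{m-1})$ is maximal in $\Omega_m$, with $\mathcal L(C_m)\subsetneq\mathcal L(C_2^{m-1})$; for $m\in\{5,6\}$, $\mathcal L(C_2^{m-4}\oplus C_4)$ is maximal; and for $m=5$, since every group in $\Omega_5$ has $\mathsf D=\mathsf D^*$, part~(4) makes $\mathcal L(C_2\oplus C_4)$ also minimal, hence incomparable, in $\Omega_5$. For $m=7$: $\mathcal L(C_7)$ and $\mathcal L(C_2^6)$ are incomparable (part~(2)), $\mathcal L(C_2^3\oplus C_4)$ is maximal (part~(3)), and, since every group in $\Omega_7$ has $\mathsf D=\mathsf D^*$, $\mathcal L(C_2\oplus C_6)$ is both maximal and minimal, hence incomparable, in $\Omega_7$ (part~(4)).

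Now I would use the trivial observations that maximality of $\mathcal L(G)$ in $\Omega_m$ excludes $\mathcal L(G)\subset\mathcal L(G')$ for every $G'\not\cong G$ in $\Omega_m$, and that minimality of $\mathcal L(G)$ excludes $\mathcal L(G')\subset\mathcal L(G)$ for every such $G'$. A short bookkeeping then gives: for $m=4$ the only pair of non-isomorphic groups is $(C_4,C_2^3)$, with $\mathcal L(C_4)\subsetneq\mathcal L(C_2^3)$; for $m\in[5,6]$, every inclusion $\mathcal L(G_1)\subset\mathcal L(G_2)$ between non-isomorphic groups of $\Omega_m$ must coincide with $\mathcal L(C_m)\subsetneq\mathcal L(C_2^{m-1})$, provided the inclusions $\mathcal L(C_5)\subset\mathcal L(C_3^2)$ and $\mathcal L(C_3^2)\subset\mathcal L(C_2^4)$ (for $m=5$), and $\mathcal L(C_6)\subset\mathcal L(C_2^2\oplus C_4)$ (for $m=6$), are impossible; and for $m=7$, $\Omega_7$ is an antichain, provided the inclusions $\mathcal L(C_4^2)\subset\mathcal L(C_3^3)$, $\mathcal L(C_3^3)\subset\mathcal L(C_4^2)$, $\mathcal L(C_4^2)\subset\mathcal L(C_2^3\oplus C_4)$ and $\mathcal L(C_3^3)\subset\mathcal L(C_2^3\oplus C_4)$ are impossible.

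Finally I would rule out these remaining inclusions using detailed information on the systems of the exceptional small groups $C_3^2$, $C_4^2$ and $C_3^3$. The two inclusions $\mathcal L(C_5)\subset\mathcal L(C_3^2)$ and $\mathcal L(C_6)\subset\mathcal L(C_2^2\oplus C_4)$ can be excluded by a coarse invariant: the cyclic group $C_m$ realizes a set of lengths with largest gap $\mathsf D(C_m)-2$ (indeed $\{2,\mathsf D(C_m)\}\in\mathcal L(C_m)$), which is not realizable over the non-cyclic, non-elementary-$2$ group on the right (equivalently, $\max\Delta$ is strictly smaller there, as is known for these small groups). The remaining five inclusions are the genuine difficulty: there the group on the right is an elementary $2$-group (in $\mathcal L(C_3^2)\subset\mathcal L(C_2^4)$) or $C_2^3\oplus C_4$, so such coarse invariants no longer separate the systems, and one must instead exhibit, for each inclusion, an explicit set of lengths belonging to the left-hand system but not to the right-hand one; these non-containments follow from the known descriptions of $\mathcal L(C_3^2)$, $\mathcal L(C_4^2)$ and $\mathcal L(C_3^3)$ (and the known values of the relevant invariants) in the literature \cite{Ge-Sc-Zh17b, Zh18a, Zh18b}. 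Once these are in place, parts~(1) and~(2) follow from the bookkeeping of the previous step. The main obstacle is exactly this last step: $C_3^2$, $C_4^2$ and $C_3^3$ lie outside the reach of Theorem~\ref{1.1}, so the handful of remaining comparisons must be settled by hand from the fine structure of their sets of lengths.
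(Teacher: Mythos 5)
Your overall strategy -- enumerate $\Omega_m$, harvest the maximality/minimality/incomparability statements from Theorem~\ref{1.1}, and reduce to a short list of residual inclusions to be excluded by hand -- is exactly the skeleton of the paper's proof, and your bookkeeping of which pairs survive is essentially correct. But two points in your plan are genuine problems rather than routine details.

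First, for $m=5$ you invoke the ``moreover'' part of Theorem~\ref{1.1}.4 to conclude that $\mathcal L(C_2\oplus C_4)$ is minimal, hence incomparable, in $\Omega_5$. In the paper that statement is proved for $n=2$ \emph{by citing Corollary~\ref{1.2}}, so your argument is circular at this point. The paper instead settles the $m=5$ comparisons directly: $\max\Delta(C_5)=3$, $\max\Delta(C_2\oplus C_4)=2$ (Lemmas~\ref{4.1}.2 and~\ref{5.1}.2) and $\max\Delta(C_3\oplus C_3)=1$ kill the inclusions into $C_2\oplus C_4$ and $C_3^2$ from the left, and the explicit set $[2,5]\in\mathcal L(C_3\oplus C_3)\setminus\mathcal L(C_2\oplus C_4)$ kills the remaining one. (Also note that $\mathcal L(C_3^2)\subset\mathcal L(C_2^4)$ does not require any fine structure: the already-quoted result of \cite{Ge-Sc-Zh17b} that $\mathcal L(G)\subset\mathcal L(C_2^{m-1})$ forces $G$ cyclic or elementary $2$ disposes of it.)

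Second, and more seriously, your final step defers the four hard non-inclusions for $m=7$ to ``known descriptions of $\mathcal L(C_4^2)$ and $\mathcal L(C_3^3)$ in the literature.'' No such descriptions exist; this is precisely where the paper has to prove new results. Concretely: $\mathcal L(C_3^3)\not\subset\mathcal L(C_2^3\oplus C_4)$ and $\mathcal L(C_4^2)\not\subset\mathcal L(C_2^3\oplus C_4)$ rest on Lemma~\ref{5.6}, a full-page case analysis showing that $3\notin\mathsf L\bigl(U(-U)\bigr)$ for every atom $U$ of length $7$ over $C_2^3\oplus C_4$, applied to the sets $\{2,3,4,5,7\}\in\mathcal L(C_3^3)$ and $[2,7]\in\mathcal L(C_4^2)$; and $\mathcal L(C_3^3)\not\subset\mathcal L(C_4^2)$ rests on Lemmas~\ref{5.4} and~\ref{5.5}, which compare the elasticity $7/3$ of the sets $\mathsf L(U^{3k})$ over $C_3^3$ (AAMPs with period $\{0,2\}$) against a $\limsup$ bound of $2$ for such sets over $C_4\oplus C_4$ obtained from the structure theory of \cite{Sc09b}. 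Only $\mathcal L(C_4^2)\not\subset\mathcal L(C_3^3)$ follows from a coarse invariant ($\max\Delta$). So the ``last step'' you flag as the main obstacle is not a citation but the mathematical core of the corollary, and your proposal does not supply it.
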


Statements $1$ and  $2$ of Theorem \ref{1.1} will be proved, resp., in Section \ref{3} and  \ref{4}; while Statements $3$ and $4$ of Theorem \ref{1.1} and Corollary \ref{1.2} will be proved in Section \ref{5}. The proofs are based on deep results on invariants controlling the structure of sets of lengths as well as on a careful case by case analysis when handling small groups.  In Section \ref{2} we gather some  background information on systems  of sets of lengths over finite abelian groups,  and we refer to the survey \cite{Sc16a} for more information.

\section{Background on systems of sets of lengths over finite abelian groups} \label{2}

We denote by $\N$ the set of positive integers and by $\N_0$ the set of nonnegative integers. For integers $a, b \in \Z$, $[a, b] = \{x \in \Z \colon a \le x \le b \}$ is the discrete interval between $a$ and $b$. Let $L, L' \subset \Z$ be subsets of the integers. Then $L+L' = \{a+b \colon a \in L, b \in L' \}$ denotes their sumset, $\Delta (L) \subset \N$ denotes the set of successive distances of elements from $L$, and $k \cdot L = \{ ka \colon a \in L \}$ is the dilation of $L$ by $k$. If $L \subset \N$, then $\rho (L) = \sup (L) / \min (L)$ is the elasticity of $L$, and for $L = \{0\}$ we set $\rho (L)=1$.
Let $d \in \N$,  $\ell,\, M \in \N_0$, and  $\{0,d\} \subset
\mathcal D \subset [0,d]$. The set $L \subset \Z$ is called

\begin{itemize}
\item an {\it arithmetical multiprogression} \ ({\rm AMP}) \ with \ {\it   difference} \ $d$, \ {\it period} \ $\mathcal D$ \ and \ {\it length}
      \ $\ell$, \ if $L$ is an interval of \ $\min L + \mathcal D + d \Z$ \ (this means that $L$ is finite nonempty and $L = (\min L + \mathcal
D + d \mathbb Z) \cap [\min L, \max L]$),
      \ and \ $\ell$ \ is maximal such that \ $\min L + \ell d \in L$.

\item an {\it almost arithmetical multiprogression} \ ({\rm AAMP}) \ with \ {\it difference} \ $d$, \ {\it period} \ $\mathcal D$,
      \ {\it length} \ $\ell$ and \ {\it bound} \ $M$, \ if
      \[
      L = y + (L' \cup L^* \cup L'') \, \subset \, y + \mathcal D + d \Z
      \]
      where \ $L^*$ \ is an \ {\rm AMP} \ with difference $d$ (whence $L^* \ne \emptyset$),
      period $\mathcal D$
      and length $\ell$ such that \ $\min L^* = 0$, \ $L' \subset [-M, -1]$, \ $L'' \subset \max L^* + [1,M]$, \ and \
      $y \in \Z$.
\end{itemize}
For a set $P$, we denote by $\mathcal F (P)$ the free abelian monoid with basis $P$. If $a = \prod_{p \in P} p^{\mathsf v_p (a)} \in \mathcal F (P)$, then $|a| = \sum_{p \in P} \mathsf v_p (a) \in \N_0$ is the length of $a$, and $\supp (a) = \{ p \in P \colon \mathsf v_p (a) > 0 \} \subset P$ is the support of $a$.

\medskip
\begin{itemize}
\item[] {\it Throughout this section, let $G$ be an additively written finite abelian group, say $G \cong C_{n_1} \oplus \ldots \oplus C_{n_r}$ with $1 < n_1 \mid \ldots \mid n_r$,  and let $G_0 \subset G$ be a subset.}
\end{itemize}
\medskip

In the above decomposition, $r = \mathsf r (G)$ is the rank of $G$ and $n_r = \exp (G)$ is the exponent of $G$. A tuple $(e_1, \ldots, e_s)$ of nonzero elements of $G$, with $s \in \N$,  is called a basis of $G$ if $G = \langle e_1 \rangle \oplus \ldots \oplus \langle e_s \rangle$. For a fixed basis $(e_1, \ldots, e_s)$ of $G$ we write $e_I = \sum_{i \in I} e_i$ for every subset $I \subset [1,s]$. In particular, we have $e_{\emptyset} = 0$.

\medskip
\noindent
{\bf The monoid of zero-sum sequences over $G_0$.}
The elements of the free abelian monoid $\mathcal F (G_0)$ with basis $G_0$ are called sequences over $G_0$. Let
\[
S = g_1 \cdot \ldots \cdot g_{\ell} = \prod_{g \in G_0} g^{\mathsf v_g (S)} \in \mathcal F (G_0)
\]
be a sequence over $G_0$. Then $|S|=\ell \in \N_0$ is the length of $S$ and $\sigma (S) = g_1 + \ldots + g_{\ell} \in G$ is the sum of $S$. We set $-S = (-g_1) \cdot \ldots \cdot (-g_{\ell})$. The sequence $S$ is said to be {\it zero-sum free} if $\sum_{i \in I} g_i \ne 0$ for all nonempty subsets $I \subset [1, \ell]$.
The monoid
\[
\mathcal B (G_0) = \{ S \in \mathcal F (G_0) \colon \sigma (S)=0 \} \subset \mathcal F (G_0)
\]
of zero-sum sequences over $G_0$ is a saturated submonoid of $\mathcal F (G_0)$ and hence a Krull monoid. The set of atoms of $\mathcal B (G_0)$ (in other words, the set of the minimal zero-sum sequences over $G_0)$ is denoted by $\mathcal A (G_0)$. The set $\mathcal A (G_0)$ is finite and
\[
\mathsf D (G_0) = \max \{ |U| \colon U \in \mathcal A (G_0) \} \in \N_0
\]
is the {\it Davenport constant} of $G_0$. We have the following lower and upper bounds,
\[
\mathsf D^* (G) := 1 + \sum_{i=1}^r (n_i-1) \le \mathsf D (G) \le |G| \,,
\]
where the left inequality is an equality for groups of rank $r \le 2$ and for $p$-groups (for recent progress on the Davenport constant we refer to \cite{Li20a, Gi18a}). Furthermore,  $\mathsf d (G) := \mathsf D (G) - 1$ is the maximal length of a zero-sum free sequence over $G$.

\medskip
\noindent
{\bf The arithmetic of $\mathcal B (G_0)$.} The free abelian monoid $\mathsf Z (G_0) = \mathcal F ( \mathcal A (G_0))$ is the factorization monoid of $\mathcal B (G_0)$. Let $\pi \colon \mathsf Z (G_0) \to \mathcal B (G_0)$ denote the canonical epimorphism. For every $B \in \mathcal  B (G_0)$, $\mathsf Z (B) = \pi^{-1} (B)$ is the {\it set of factorizations} of $B$ and $\mathsf L (B) = \{ |z| \colon z \in \mathsf Z (B) \}$ is the {\it set of lengths} of $B$. Note that $\mathsf L (1_{\mathcal B (G_0)}) = \{0\}$, and we have $\mathsf L (B) = \{1\}$ if and only if $B \in \mathcal A (G_0)$. Then
\[
\mathcal L (G_0) = \{ \mathsf L (B) \colon B \in \mathcal B (G_0) \}
\]
is the {\it system of sets of lengths} of $\mathcal B (G_0)$. The systems $\mathcal L (G)$ are of high relevance because of transfer results in factorization theory. Indeed, if $H$ is a transfer Krull monoid over $G$, then $\mathcal L (H) = \mathcal L (G)$. Transfer Krull monoids include  commutative Krull monoids and Krull domains but also  classes of non-commutative Dedekind domains. We do not discuss these connections  here but refer to the surveys \cite{Ge16c, Ge-Zh20a}.

We recall the concept of  the $g$-norm, which is a powerful tool for the study of sets of lengths of zero-sum sequences over cyclic groups.
Let $g \in G$ with \ $\ord(g) = n \ge 2$. For a
sequence $S = (n_1g) \cdot \ldots \cdot (n_{\ell}g) \in \mathcal F
(\langle g \rangle)$, \ where $\ell \in \N_0$ and $n_1, \ldots, n_{\ell}
\in [1, n]$, \ we define
\[
\| S \|_g = \frac{n_1+ \ldots + n_{\ell}}n  \,.
\]
Note that $\sigma (S) = 0$ implies that   $n_1 + \ldots + n_{\ell}
\equiv 0 \mod n$ whence $\| S \|_g \in \N_0$.  Thus, $\| \cdot
\|_g \colon \mathcal B(\langle g \rangle) \to \N_0$ is a
homomorphism, and $\| S \|_g =0$ if and only if $S=1$. If $S \in \mathcal A (G_0)$, then $\|S\|_g \in [1, n-1]$, and if $\|S\|_g = 1$, then $S \in \mathcal A (G_0)$.
Arguing as above we obtain that
\[
\frac{\|A\|_g}{n-1} \le \min \mathsf L (A) \le \max \mathsf L (A) \le \|A\|_g \,.
\]

Next we define the distance of factorizations and the catenary degree. Two factorizations $z, z' \in \mathsf Z (G_0)$ can be written, uniquely up to the order of the terms, in the form
\[
z = U_1 \cdot \ldots \cdot U_k V_1 \cdot \ldots \cdot V_{\ell} \ \text{and} \ z' = U_1 \cdot \ldots \cdot U_k W_1 \cdot \ldots \cdot W_{m}
\]
where all $U_r, V_s, W_t \in \mathcal A (G_0)$ and all $V_i \ne W_j$ for all $i \in [1, \ell]$ and all $j \in [1, m]$. Then $\mathsf d (z, z') = \max \{\ell, m\}$ is the distance between $z$ and $z'$. For an element $B \in \mathcal B (G_0)$, the catenary degree $\mathsf c (B)$ is the smallest $N \in \N_0$ such that for any two factorizations $z, z' \in \mathsf Z (B)$ there are factorizations $z=z_0, z_1, \ldots, z_k=z'$ of $B$ such that $\mathsf d (z_{i-1}, z_i) \le N$ for all $i \in [1,k]$. Then $\mathsf c (G_0) = \max \{ \mathsf c (B) \colon B \in \mathcal B (G_0) \}$ is the {\it catenary degree} of $G_0$.

We say that the monoid $\mathcal B (G_0)$ (resp. $G_0$) is {\it half-factorial} if and only if $|L|=1$ for all $L \in \mathcal L (G_0)$. We denote by
\begin{equation} \label{definition-delta}
\Delta (G_0) = \bigcup_{L \in \mathcal L (G_0)} \Delta (L) \ \subset \N
\end{equation}
the {\it set of distances} of $G_0$ and we set
\[
\daleth (G_0) =  \max \{ \min (L \setminus \{2\}) \mid 2 \in L \in \mathcal L (G_0) \}  \,.
\]
If $\Delta (G_0) \ne \emptyset$, then $\min \Delta (G_0) = \gcd \Delta (G_0)$ and, by \cite[Theorems 1.6.3 and 3.4.10]{Ge-HK06a}), we have
\begin{equation} \label{basic inequalities}
\daleth (G_0) \le 2 + \max \Delta (G_0) \le \mathsf c (G_0) \le \mathsf D (G_0) \,.
\end{equation}
The {\it set of minimal distances} $\Delta^* (G) \subset \Delta (G)$ is defined as
\[
\Delta^* (G) = \{ \min \Delta (G_0) \colon G_0 \subset G \ \text{with} \ \Delta (G_0) \ne \emptyset \} \subset \Delta (G) \,.
\]
For $k \in \N$, the $k$th {\it elasticity} of $G_0$ is defined as
\begin{equation} \label{definition-rho}
\rho_k (G_0) = \max \{ \max L \colon k \in L \in \mathcal L (G_0) \}
\ \text{and} \
\rho (G_0) = \sup \{ \rho (L) \colon L \in \mathcal L (G_0) \}
\end{equation}
is the {\it elasticity} of $G_0$.

\smallskip
We end this section with three propositions. They gather some of the key properties and results on the above invariants.
The first proposition reveals the relevance of $\Delta^* (G)$  (see \cite[Theorem 4.4.11]{Ge-HK06a}).

\begin{proposition} \label{2.1}
Let $G$ be a finite abelian group with $|G| \ge 3$. There exists some $M \in \N_0$ such that every set of lengths $L \in \mathcal L (G)$ is an {\rm AAMP} with difference $d \in \Delta^* (G)$ and bound $M$.
\end{proposition}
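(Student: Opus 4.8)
This is the Structure Theorem for Sets of Lengths, and the plan is to reduce everything to the monoid of zero-sum sequences and then run the standard two-step scheme: first prove that long sets of lengths are eventually periodic with a uniform bound, then pin the difference down to $\Delta^* (G)$. By the transfer equality $\mathcal L (H) = \mathcal L (G) = \mathcal L (\mathcal B (G))$ it suffices to treat $\mathcal B (G)$, which is a Krull monoid with finite Davenport constant $\mathsf D (G)$. The global finiteness facts we feed into the argument are: $\Delta (G)$ is finite with $\max \Delta (G) \le \mathsf D (G) - 2$ and $\mathsf c (G) \le \mathsf D (G)$ by \eqref{basic inequalities}; the elasticity $\rho (G) = \mathsf D (G)/2$ is finite; and the sequence $\bigl( \rho_k (G) \bigr)_{k \ge 1}$ is eventually an arithmetical progression. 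In particular there are only finitely many candidate pairs $(d, \mathcal D)$ with $d \le \max \Delta (G)$ and $\{0,d\} \subset \mathcal D \subset [0,d]$.

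The core step is the periodicity statement: there is an $M \in \N_0$, depending only on $G$, such that for every $B \in \mathcal B (G)$ the set $L = \mathsf L (B)$ splits as $L = y + (L' \cup L^* \cup L'')$ with $L^*$ an AMP of difference $d \le \max \Delta (G)$ and some period $\mathcal D$, $\min L^* = 0$, $L' \subset [-M,-1]$ and $L'' \subset \max L^* + [1,M]$. The standard way to obtain this is to study the \emph{local patterns} $(L - m) \cap [-\mathsf c (G), \mathsf c (G)]$ for $m \in L$: since consecutive lengths of $L$ differ by at most $\max \Delta (G)$, these patterns range over a fixed finite family of subsets of $[-\mathsf c (G), \mathsf c (G)]$. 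Exploiting the self-similarity of sets of lengths under multiplication by atoms (e.g. $\mathsf L (BU) \supset \mathsf L (B) + 1$ for $U \in \mathcal A (G)$) together with a Dickson/pigeonhole stabilization argument on these patterns, one shows that once $L$ is long enough its interior cannot keep changing its pattern, so it becomes periodic; quantifying "long enough" and "interior" uniformly in $B$ gives the bound $M$ and the AAMP shape above.

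It remains to arrange that the difference of the periodic part lies in $\Delta^* (G)$. Here one uses that the difference $d$ occurring in $L^*$ is witnessed already by a sub-configuration of $B$: by passing to a suitable subsequence of $B$ (or a divisor of a power of $B$) whose set of lengths still realizes this difference, the difference can be exhibited as $\min \Delta (G_0)$ for the support $G_0$ of that subsequence, whence $d \in \Delta^* (G)$ by the very definition of $\Delta^* (G)$. Since $\Delta^* (G)$ is finite, one finally replaces $M$ by the maximum of the finitely many bounds obtained for the finitely many admissible $d$, producing a single $M$ valid for all $L \in \mathcal L (G)$.

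The main obstacle is the core periodicity statement of the second paragraph: proving that sufficiently long sets of lengths are genuinely \emph{eventually periodic} — not merely that their gaps are bounded by $\max \Delta (G)$ — and that the onset of periodicity and the size of the non-periodic ends can be bounded \emph{uniformly} over all of $\mathcal B (G)$. This is where the genuine content lies, and it requires combining the finiteness of $\Delta (G)$ and $\mathsf c (G)$ with a careful inductive pattern-stabilization argument. The subsequent reduction of the difference to $\Delta^* (G)$ is secondary but still requires some care, since the difference of an AMP is in general strictly larger than the minimal distance of its period.
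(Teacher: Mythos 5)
The paper does not actually prove this proposition: it is the Structure Theorem for Sets of Lengths and is quoted verbatim from \cite[Theorem 4.4.11]{Ge-HK06a}. Your proposal is a reasonable roadmap of that proof, but it has a genuine gap exactly where you yourself locate it, and the gap is not closed. The pigeonhole observation that the local patterns $(L-m)\cap[-\mathsf c(G),\mathsf c(G)]$ range over a finite family only shows that some pattern recurs; recurrence of a pattern at two places of $L$ does not force $L$ to be periodic between or beyond them, and the self-similarity $\mathsf L(B)+1\subset\mathsf L(BU)$ points in the wrong direction for this purpose — it produces new (longer) sets of lengths rather than controlling the interior of a given one. The argument in \cite{Ge-HK06a} instead runs through pattern ideals: for every finite $A\subset\Z$ one considers the $s$-ideal $\Phi(A)=\{B\colon y+A\subset\mathsf L(B)\ \text{for some}\ y\in\Z\}$, proves via Dickson's lemma and the finite generation of $\mathcal B(G)$ that $\Phi(A)$ is \emph{tamely generated}, and it is this quantitative tameness (every $B\in\Phi(A)$ has a divisor of bounded size already realizing the pattern, with controlled cofactor) that converts recurrence into genuine periodicity with a bound $M$ uniform over all $B$. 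Nothing in your second paragraph substitutes for that mechanism.

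The last step is also understated. Getting the difference $d$ into $\Delta^*(G)$, i.e.\ realized as $\min\Delta(G_0)$ for a single subset $G_0\subset G$, is not achieved by passing to a subsequence whose set of lengths still exhibits the distance $d$: the support $G_0$ of such a subsequence only witnesses $d\in\Delta(G_0)$, whereas $\min\Delta(G_0)=\gcd\Delta(G_0)$ may be a proper divisor of $d$. This refinement is a separate argument in \cite[Sections 4.3--4.4]{Ge-HK06a}. In short, you have correctly named the theorem, its architecture, and its hard point, but the two steps carrying all the content are asserted rather than proved, so as a self-contained proof the proposal is incomplete; for the purposes of this paper the statement should simply be cited.
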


\begin{proposition} \label{2.2}
Let $m \ge 3$.
\begin{enumerate}
\item $\Delta (C_m) = \Delta (C_2^{m-1}) = [1, m-2]$ and $\Delta^* (C_m) \subset [1, m-2] = \Delta^* (C_2^{m-1})$.

\item $\max \Delta^* (G) = \max \{\exp (G)-2, \mathsf r (G)-1 \}$.

\item Let $k \in \N$. Then $\rho_{2k} (G) = k \mathsf D (G)$, $k \mathsf D (G) + 1 \le \rho_{2k+1} (G) \le k \mathsf D (G) + \mathsf D (G)/2$, and $\rho (G)  =  \mathsf D (G)/2$. If $G$ is cyclic, then $\rho_{2k+1}(G) = k \mathsf D (G)+1$ and if $G$ is an elementary $2$-group, then $\rho_{2k+1} (G) = k \mathsf D (G) + \lfloor \mathsf D (G)/2 \rfloor$.
\end{enumerate}
\end{proposition}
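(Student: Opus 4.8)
Proposition \ref{2.2} collects standard facts on sets of distances and elasticities; the plan is to quote the deep results and to give short arguments for the elementary ones. For part 1, the bounds $\max \Delta(C_m) \le m-2$ and $\max \Delta(C_2^{m-1}) \le m-2$ follow from \eqref{basic inequalities}, once we record that $\mathsf D(C_m) = m$ and that, $C_2^{m-1}$ being a $2$-group, $\mathsf D(C_2^{m-1}) = \mathsf D^*(C_2^{m-1}) = m$. The reverse inclusions $[1, m-2] \subseteq \Delta(C_m)$ and $[1, m-2] \subseteq \Delta(C_2^{m-1})$ are the classical determinations of the sets of distances of cyclic groups and of elementary $2$-groups, which I would cite from \cite{Ge-HK06a} and the references therein. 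Then $\Delta^*(C_m) \subseteq \Delta(C_m) = [1, m-2]$, and $\Delta^*(C_2^{m-1}) \subseteq \Delta(C_2^{m-1}) = [1, m-2]$; for the reverse inclusion one uses that $C_2^{d+1} \le C_2^{m-1}$ for each $d \in [1, m-2]$, whence $\Delta^*(C_2^{d+1}) \subseteq \Delta^*(C_2^{m-1})$, and part 2 gives $d = \mathsf r(C_2^{d+1}) - 1 \in \Delta^*(C_2^{d+1})$.

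For part 2, the lower bound $\max \Delta^*(G) \ge \max\{\exp(G)-2, \mathsf r(G)-1\}$ is witnessed by explicit subsets: if $g \in G$ has order $n = \exp(G) \ge 3$, then over $G_0 = \{g, -g\}$ the only atoms are $g^n$, $(-g)^n$ and $g(-g)$, so $\mathsf L(g^n(-g)^n) = \{2, n\}$ and $\min \Delta(G_0) = n - 2$; the value $\mathsf r(G) - 1$ is realized by a similar but more delicate construction involving a basis $(e_1, \dots, e_r)$ of $G$ together with the element $e_0 = -(e_1 + \dots + e_r)$ (in the elementary $2$-group case already $(e_0 e_1 \cdots e_r)^2 = e_0^2 e_1^2 \cdots e_r^2$ does the job). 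The reverse inequality $\max \Delta^*(G) \le \max\{\exp(G) - 2, \mathsf r(G) - 1\}$ is the substantive statement and, in my view, the main obstacle: it requires a uniform analysis of the lengths of minimal zero-sum sequences over arbitrary subsets $G_0 \subseteq G$, and I would invoke the known theorem (see \cite{Ge-HK06a} and the survey \cite{Ge-Zh20a}) rather than reprove it.

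For part 3, one first observes that for the purpose of computing elasticities it suffices to consider $B$ with $0 \nmid B$, so that every atom occurring has length at least $2$; then $2 \max \mathsf L(B) \le |B| \le (\min \mathsf L(B)) \mathsf D(G)$. If $2k \in \mathsf L(B)$ this gives $\rho_{2k}(G) \le k\mathsf D(G)$, and conversely, for an atom $U$ with $|U| = \mathsf D(G)$ we have $\{2k, k\mathsf D(G)\} \subseteq \mathsf L\big((U(-U))^k\big)$, since $(U(-U))^k = U^k(-U)^k$ decomposes into $2k$ atoms and, regrouping $U(-U) = \prod_{g}(g(-g))^{\mathsf v_g(U)}$, into $k\mathsf D(G)$ atoms; hence $\rho_{2k}(G) = k\mathsf D(G)$, and since $\mathsf D(G)/2 = \rho_{2k}(G)/(2k) \le \rho(G)$ while $\rho(L) \le \mathsf D(G)/2$ for every $L$, also $\rho(G) = \mathsf D(G)/2$. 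Likewise $2k+1 \in \mathsf L(B)$ forces $\rho_{2k+1}(G) \le k\mathsf D(G) + \mathsf D(G)/2$, while $B = U^k(-U)^k h(-h)$ with $h \in \supp(U)$ has a factorization into $2k+1$ atoms and one into $k\mathsf D(G) + 1$ atoms, so $\rho_{2k+1}(G) \ge k\mathsf D(G) + 1$. Finally the sharper values $\rho_{2k+1}(C_n) = kn + 1$ and $\rho_{2k+1}(C_2^{m-1}) = k\mathsf D(C_2^{m-1}) + \lfloor \mathsf D(C_2^{m-1})/2 \rfloor$ lie deeper, since for these groups the crude inequality $2 \max \mathsf L(B) \le |B|$ is far from tight: the cyclic case is obtained from the $g$-norm together with the refined structure of sets of lengths over $C_n$, the elementary $2$-group case from the analogous combinatorial description, and these — like the reverse inequality in part 2 — I would take from the literature.
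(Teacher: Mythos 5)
Your proposal is correct and matches the paper's approach: the paper's own proof consists entirely of citations (the $\max\Delta^*$ formula to the literature, the cyclic value of $\rho_{2k+1}$ to Geroldinger--Ruzsa, and everything else to \cite[Chapter 6]{Ge-HK06a}), and you cite the same deep facts while additionally supplying sound elementary verifications (the $|B|\le \ell\,\mathsf D(G)$ and $|B|\ge 2\ell$ counting for the elasticities, the $\{g,-g\}$ and basis constructions for the lower bounds on $\Delta^*$). No gaps; your extra arguments are all standard and correct.
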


\begin{proof}
The claim on $\max \Delta^* (G)$ follows from \cite{Ge-Zh16a}. If $G$ is cyclic, then $\rho_{2k+1}(G) = k \mathsf D (G)+1$ for all $k \in \N$ by \cite[page 75, Theorem 5.3.1]{Ge-Ru09}. Proofs of the remaining claims can be found in \cite[Chapter 6]{Ge-HK06a}.
\end{proof}

\begin{proposition} \label{2.3}
Let $G$ and $G'$ be finite abelian groups with $\mathsf D (G) \ge 4$ such that $\mathcal L (G) = \mathcal L (G')$.
\begin{enumerate}
\item $\rho (G) = \rho (G')$ and $\rho_k (G) = \rho_k (G')$ for all $k \in \N$. In particular, $\mathsf D (G) = \mathsf D (G')$.

\item $\Delta (G) = \Delta (G')$ and $\max \Delta^* (G) = \max \Delta^* (G')$.

\item If $G$ is  cyclic or an elementary $2$-group with $\mathsf D (G) \ge 4$, then $G \cong G'$.
\end{enumerate}
\end{proposition}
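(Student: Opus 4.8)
The plan is to obtain part~(1) and the first equality of part~(2) directly from the definitions, to isolate the equality $\max \Delta^* (G) = \max \Delta^* (G')$ of part~(2) as the single point needing genuine work, and then to deduce part~(3) from (1), (2) and Proposition \ref{2.2} by a short structural computation. For~(1): by \eqref{definition-rho} the quantities $\rho_k (G_0)$ and $\rho (G_0)$ are defined purely in terms of the family $\mathcal L (G_0)$, so $\mathcal L (G) = \mathcal L (G')$ yields at once $\rho_k (G) = \rho_k (G')$ for all $k \in \N$ and $\rho (G) = \rho (G')$; taking $k = 1$ in Proposition \ref{2.2}(3) (which gives $\rho_2 = \mathsf D$, or alternatively $\rho = \mathsf D/2$) we get $\mathsf D (G) = \mathsf D (G')$, and in particular $\mathsf D (G') \ge 4$. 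For the first equality of~(2): by \eqref{definition-delta} the set $\Delta (G_0) = \bigcup_{L \in \mathcal L (G_0)} \Delta (L)$ likewise depends only on $\mathcal L (G_0)$, so $\Delta (G) = \Delta (G')$.

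For the equality $\max \Delta^* (G) = \max \Delta^* (G')$, by symmetry it is enough to prove $d := \max \Delta^* (G) \le \max \Delta^* (G') =: d'$. Pick $G_0 \subset G$ with $\min \Delta (G_0) = d$. Since $\min \Delta (G_0) = \gcd \Delta (G_0) = d$, every set of lengths $\mathsf L (A)$ with $A \in \mathcal B (G_0)$ has all its successive distances divisible by $d$, hence lies in a single residue class modulo $d$. As $d \in \Delta (G_0)$, there are $B \in \mathcal B (G_0)$ and $a \in \N$ with $\{ a, a + d \} \subset \mathsf L (B)$, so for every $n \in \N$ one has $\mathsf L (B^n) \supseteq \{ na + jd \colon 0 \le j \le n \}$ (by concatenating $n$ factorizations of $B$); thus $\mathsf L (B^n)$ contains an arithmetical progression with difference $d$ and length $n + 1$, while at the same time $\mathsf L (B^n) \subset na + d \Z$. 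Now view $\mathsf L (B^n)$ as an element of $\mathcal L (G') = \mathcal L (G)$: by Proposition \ref{2.1} applied to $G'$ it is an {\rm AAMP} with difference $e_n \in \Delta^* (G')$ and a bound $M'$ independent of $n$. For $n$ large, all but at most $2 M'$ of the $n + 1$ progression terms lie in the central {\rm AMP} $y_n + L_n^*$ of this {\rm AAMP}, so $y_n + L_n^*$ contains an arithmetical progression with difference $d$ and length at least $n - 2 M'$; but $y_n + L_n^* \subset na + d \Z$, hence all successive distances of $L_n^*$ are divisible by $d$, and summing these over one period gives $d \mid e_n$. Therefore $d \le e_n \le \max \Delta^* (G') = d'$, as wanted. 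The crux is exactly this last deduction: an {\rm AAMP} need not be written with its smallest admissible difference, so the mere existence of a long progression with difference $d$ inside $\mathsf L (B^n)$ does not bound $e_n$ from below; one genuinely needs that $\mathsf L (B^n)$ comes from $G_0$ and is therefore confined to one residue class modulo $d$. I expect this interaction between Proposition \ref{2.1} and the residue restriction to be the main technical obstacle. (Alternatively, the equality $\max \Delta^* (G) = \max \Delta^* (G')$ may be quoted from known results on the set of distances in finite abelian groups, e.g.\ \cite{Ge-Zh16a}.)

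Finally, for~(3) assume first $G \cong C_m$ with $m = \mathsf D (G) \ge 4$. By~(1), $\mathsf D (G') = m$, and by~(2) together with Proposition \ref{2.2}(2), $\max \{ \exp (G') - 2, \mathsf r (G') - 1 \} = \max \Delta^* (G') = \max \Delta^* (C_m) = m - 2$, so $\exp (G') = m$ or $\mathsf r (G') = m - 1$. Writing $G' \cong C_{n_1} \oplus \cdots \oplus C_{n_r}$ with $2 \le n_1 \mid \cdots \mid n_r$, we have $1 + \sum_{i=1}^{r} (n_i - 1) = \mathsf D^* (G') \le \mathsf D (G') = m$. If $\mathsf r (G') = r = m - 1$, then $\sum_{i=1}^{m-1} (n_i - 1) \le m - 1$ forces $n_i = 2$ for all $i$, so $G' \cong C_2^{m-1}$; if $\exp (G') = n_r = m$, then $\sum_{i=1}^{r-1} (n_i - 1) \le 0$ forces $r = 1$, so $G' \cong C_m$. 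Hence $G' \cong C_m$ or $G' \cong C_2^{m-1}$, and the latter is excluded by~(1) and Proposition \ref{2.2}(3), since $\rho_3 (C_m) = m + 1 \ne m + \lfloor m/2 \rfloor = \rho_3 (C_2^{m-1})$ for $m \ge 4$. Thus $G' \cong C_m$. The case $G \cong C_2^{m-1}$ is symmetric: once more $\max \Delta^* (G') = \max \Delta^* (C_2^{m-1}) = m - 2$ leaves only the possibilities $G' \cong C_m$ or $G' \cong C_2^{m-1}$, and the same elasticity comparison rules out $C_m$.
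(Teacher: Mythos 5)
Your proof is correct, and parts (1) and the first half of (2) coincide with the paper's argument (both are immediate from the definitions \eqref{definition-rho} and \eqref{definition-delta} together with $\rho_2(G)=\mathsf D(G)$). The difference lies in the remaining two claims, which the paper simply quotes from \cite{Ge-HK06a}: the equality $\max\Delta^*(G)=\max\Delta^*(G')$ is cited as \cite[Corollary 4.3.16]{Ge-HK06a} (with the remark that it is ``based on Proposition \ref{2.1}''), and part (3) is cited as \cite[Theorem 7.3.3]{Ge-HK06a}. You instead reconstruct both. Your argument for $\max\Delta^*$ is exactly the intended use of Proposition \ref{2.1}: the powers $B^n$ confine $\mathsf L(B^n)$ to a single residue class modulo $d$ while forcing a long progression of difference $d$ into the central {\rm AMP}, whence $d\mid e_n$ and $d\le\max\Delta^*(G')$; the only point left tacit is that for large $n$ the central part $L_n^*$ is long enough to contain a full period (so that the successive distances really sum to $e_n$), which indeed follows from your length estimate $n-2M'$. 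Your derivation of (3) from $\mathsf D$, $\max\Delta^*$, $\mathsf D^*\le\mathsf D$ and the $\rho_3$-values in Proposition \ref{2.2}.3 is also sound and is essentially the standard proof of the cited Theorem 7.3.3. What your route buys is a self-contained proof from the stated propositions; what it costs is length and a reliance on Proposition \ref{2.2}.2--3, which the paper's citations package away.
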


\begin{proof}
1. The claims on $\rho (G)$ and on $\rho_k (G)$ follow from Definition \eqref{definition-rho}. Since $\mathsf D (G) = \rho_2 (G)$ and $\mathsf D (G') = \rho_2 (G')$ by Proposition \ref{2.2}, we obtain that $\mathsf D (G) = \mathsf D (G')$.

2. The claim on $\Delta (G)$ is clear by Definition \eqref{definition-delta}. The claim on $\Delta^* (G)$ is based on Proposition \ref{2.1} and is given in  \cite[Corollary 4.3.16]{Ge-HK06a}.

3. This follows from \cite[Theorem 7.3.3]{Ge-HK06a}.
\end{proof}

\section{Proof of Theorem \ref{1.1}.1} \label{3}

The goal of this section is to prove Theorem \ref{1.1}.1. A main part is to show that $\mathcal L (C_6) \subset \mathcal L (C_2^5)$. This will be done in a series of subsections.
We need  a lot of computations with zero-sum sequences over  $C_2^5$. To simplify notation and to avoid repetitions, we fix the following notation until the end of this section.

We  fix a basis $(e_1, \ldots, e_5)$ of $C_2^5$. For every subset $I \subset [1,5]$, we define
\[
e_I = \sum_{i \in I} e_i , \quad U_I = e_I \prod_{i\in I}e_i, \quad \text{ and }  \quad V_I = e_I \prod_{i\in [0,5] \setminus I}e_i \,.
\]
Moreover, we set $e_0  := e_{[1,5]}$ and $U := U_{[1,5]}$.
If $\emptyset \neq I  \subsetneq [1,5]$, then $U_I$ and $V_I$ are minimal zero-sum sequences over
$\{e_I, e_0, \dots, e_5\}$.

\subsection{On intervals in $\mathcal L (C_6)$ and $\mathcal L (C_2^5)$} \label{3.a}

The goal of this subsection is to show that  all intervals, that lie  in $\mathcal L (C_6)$, also lie in $ \mathcal L (C_2^5)$.
We start with two lemmas.

\begin{lemma} \label{3.1}
Let $L \in \mathcal  L (C_6)$ with $\{2, 5\} \subset L$. Then $L = \{2, 5\}$ or $L = \{2, 4, 5\}$, and both sets actually lie in $\mathcal L (C_6)$.
In particular, $[2,5] \notin \mathcal  L (C_6)$.
\end{lemma}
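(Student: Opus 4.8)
The plan is to work with the $g$-norm over $C_6$. Fix a generator $g$ of $C_6$, so $\mathcal B(C_6)$ carries the homomorphism $\|\cdot\|_g \colon \mathcal B(C_6) \to \N_0$, and recall the inequality $\|A\|_g/(n-1) \le \min \mathsf L(A) \le \max \mathsf L(A) \le \|A\|_g$ with $n = 6$, i.e.\ $\|A\|_g/5 \le \min \mathsf L(A) \le \max \mathsf L(A) \le \|A\|_g$. First I would fix $B \in \mathcal B(C_6)$ with $\mathsf L(B) = L$ and $\{2,5\} \subset L$. Since $5 \in \mathsf L(B)$, the upper bound gives $\|B\|_g \ge 5$; since $2 \in \mathsf L(B)$, the lower bound gives $\|B\|_g \le 2 \cdot 5 = 10$, and combining with $\max \mathsf L(B) \ge 5$ and $\max \mathsf L(B) \le \|B\|_g$ we get $\|B\|_g \in [5,10]$. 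But actually a sharper constraint comes from the fact that $B$ has a factorization into $2$ atoms and also one into $5$ atoms; writing $w = \|B\|_g$, a factorization into $k$ atoms $U_1 \cdots U_k$ gives $w = \sum \|U_i\|_g$ with each $\|U_i\|_g \in [1,5]$, so $k \le w \le 5k$. From $k=5$ we get $w \ge 5$, and from $k=2$ we get $w \le 10$; moreover $w \ge 5$ forces, in the $5$-atom factorization, that $\sum_{i=1}^5 \|U_i\|_g = w$ with all summands $\ge 1$, so $w \ge 5$ with equality iff all five atoms have norm $1$. The key observation I would push is that $w$ cannot be too large: if $w \ge 6$ then $\min \mathsf L(B) \ge w/5 > 1$, which is automatic, but we also need $2 \in \mathsf L(B)$, forcing $w \le 10$; the real work is to show $w \le 8$ or so and then enumerate.

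The cleaner route: since $2 \in L$, write $B = W_1 W_2$ with $W_1, W_2 \in \mathcal A(C_6)$, so $\|B\|_g = \|W_1\|_g + \|W_2\|_g$ with each $\|W_i\|_g \in [1,5]$. Since $5 \in L$ and $\max \mathsf L(B) \le \|B\|_g$, we need $\|B\|_g \ge 5$, so $(\|W_1\|_g, \|W_2\|_g)$ is, up to order, one of $(1,4),(2,3),(2,4),(3,3),(3,4),(1,5),(2,5),(3,5),(4,4),(4,5),(5,5)$ — i.e.\ norm sum in $[5,10]$. For each such pair I would use known descriptions of atoms of $C_6$ of small norm (an atom of norm $1$ over $\langle g\rangle$ is of the form $g^a(-g)^{?}$-type, more precisely norm-$1$ atoms are $g \cdot (g^{\text{complement}})$... here I would invoke that atoms $A$ with $\|A\|_g = 1$ satisfy $\mathsf L(A) = \{1\}$ and are essentially $g^j \cdot ((6-j)g)$ for suitable combinations, while atoms of norm $\le 2$ over a cyclic group of order $6$ are completely classified and short). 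Then $\mathsf L(B)$ is contained in $\mathsf L(W_1) + \mathsf L(W_2)$-type combinatorics — more precisely I would bound $\max L \le \|B\|_g \le 10$ but sharpen: since $5 \in L$ and the length set of a product of two atoms of total norm $w$ is contained in $[\lceil w/5\rceil, w]$ and rather constrained, the only way $5$ occurs is $w \in \{5,6,7,8,9,10\}$, and then I check which of these actually admit $2 \in L$ simultaneously with no element of $L$ exceeding the values $\{5\}$ or $\{4,5\}$.

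Honestly, the approach I would actually commit to is a direct structural one: by Proposition~\ref{2.2}, $\rho_2(C_6) = \mathsf D(C_6) = 6$, so $\rho_2$ is $6$, but $B$ with $2 \in \mathsf L(B)$ has $\max \mathsf L(B) \le \rho_2(C_6) = 6$; next, $\rho_3(C_6) = 1 \cdot 6 + 1 = 7$ (cyclic case of Proposition~\ref{2.2}.3 with $k=1$), so if $3 \in \mathsf L(B)$ then $\max \mathsf L(B) \le 7$, and $\rho_4(C_6) = 2\cdot 6 = 12$, $\rho_5(C_6) = 2 \cdot 6 + 1 = 13$. Since $2 \in L$ we get $\max L \le 6$, so $L \subset [2,6]$, and since $5 \in L$ and $\Delta(C_6) = [1,4]$ by Proposition~\ref{2.2}.1 while sets of lengths are AAMPs (Proposition~\ref{2.1}), $L$ is quite restricted. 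Now I would rule out $6 \in L$: if $\{2,6\} \subset L$ then $B$ factors into $2$ atoms of total length... this needs $\|B\|_g$ consideration again; $6 = \max L \le \|B\|_g$ and $2 = \min L \ge \|B\|_g/5$ gives $\|B\|_g \in [6,10]$, and a length-$6$ factorization into atoms of norm $\ge 1$ forces $\|B\|_g \ge 6$ with the six atoms summing to $\|B\|_g \le 10$, so at least two atoms have norm $1$; combined with the length-$2$ factorization $B = W_1 W_2$, I get a contradiction by a short explicit argument (this is the case analysis the problem is really about). Then $L \subset \{2,3,4,5\}$ with $\{2,5\} \subset L$; ruling out $3 \in L$ uses that $\|B\|_g \le 10$ and a length-$3$ factorization with total norm $\ge 5$ contradicts a length-$2$ factorization in a similar short way, or invokes Lemma~\ref{3.1}'s companion results. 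That leaves $L \in \{\{2,5\}, \{2,4,5\}\}$.

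\medskip

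The \textbf{main obstacle} I anticipate is precisely the elimination of $6$ and of $3$ from $L$ — i.e.\ showing $\{2,6\} \not\subset L$ and $\{2,3,5\} \not\subset L$ for the relevant $B$. The clean way is the $g$-norm pigeonhole: $\|B\|_g = \|W_1\|_g + \|W_2\|_g \le 10$ from the length-$2$ factorization, while a long factorization (length $5$ or $6$) forces $\|B\|_g$ to split as a sum of that many atom-norms each $\ge 1$, and the short atoms of norm $1$ over $C_6$ are explicitly $g^a \cdot ((6-a)g)$ for $a \in [1,5]$ — so one shows the two factorizations cannot coexist by a finite check on how a norm-$\le 2$ atom can be refined. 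Equivalently, one can cite the classification of $\mathsf L(B)$ for $\|B\|_g \le 10$ over $C_6$; since $\|B\|_g \le 10$ is small, this is a genuinely finite verification, and I expect the author's proof to carry it out by hand. Finally, exhibiting $B$ with $\mathsf L(B) = \{2,5\}$ and $B'$ with $\mathsf L(B') = \{2,4,5\}$ is routine — take $B = (g^3)^2 \cdot (g^2 \cdot (2g) \cdot (2g))^{?}$-style explicit products of a norm-$2$ atom and a norm-$3$ atom, or the standard examples $g^6 \cdot (-g)^{?}$; these are quick to write down and check once the upper-bound structure is in place.
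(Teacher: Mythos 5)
Your proposal does not close the two steps on which the lemma actually rests, and the routes you sketch for them would fail. The paper's proof hinges on one reduction that your plan never reaches: since $5\in L$, the sequence $B$ cannot contain $0$ and every atom in a factorization of length $5$ has length at least $2$, so $|B|\ge 10$; combined with $2\in L$ and $\mathsf D(C_6)=6$ this forces $B=U_1U_2$ with $|U_1|,|U_2|\ge 5$. Over $C_6$ the only atoms of length at least $5$ are $W=g^6$, $V=g^4(2g)$, $-W$, $-V$ (for a generator $g$), so the whole lemma reduces to computing $\mathsf L(U_1U_2)$ for a handful of explicit products: $\mathsf L\big(W(-W)\big)=\{2,6\}$, $\mathsf L\big(V(-V)\big)=\{2,4,5\}$, $\mathsf L\big(W(-V)\big)=\mathsf L\big((-W)V\big)=\{2,5\}$, and the remaining products have length set $\{2\}$. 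Your proposal instead leaves ``rule out $6\in L$'' and ``rule out $3\in L$'' as unperformed case analyses, and you yourself flag them as ``the case analysis the problem is really about.''

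Worse, the arguments you sketch for those exclusions cannot work as stated. You propose to derive a contradiction from $\{2,6\}\subset L$ by norm counting, but $\mathsf L\big(g^6(-g)^6\big)=\{2,6\}$ lies in $\mathcal L(C_6)$, so no contradiction follows from $\{2,6\}\subset L$ alone; the hypothesis $5\in L$ must be used, and your sketch does not say how. The same objection applies to excluding $3$: sets containing $\{2,3\}$ do occur in $\mathcal L(C_6)$, so ``a length-$3$ factorization contradicts a length-$2$ factorization'' is false in general. The $g$-norm bound $\|B\|_g\in[5,10]$ and the bound $\max L\le\rho_2(C_6)=6$ are correct but too weak to finish without the length-$\ge 5$ reduction, and the enumeration of ``atoms of small norm'' you would need instead is a much larger finite check that you do not carry out. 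Finally, the realization of $\{2,5\}$ and $\{2,4,5\}$ is asserted as routine with garbled candidates; the actual witnesses are $g^6\cdot(-g)^4(-2g)$ and $g^4(2g)\cdot(-g)^4(-2g)$.
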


\begin{proof}
Let $B \in \mathcal B (C_6)$ with $\{2,5\} \subset \mathsf L (B)$.
Then $B = U_1 U_2$ with  $U_1, U_2 \in \mathcal A (C_6)$ and $|U_i| \ge 5$ for $i \in [1,2]$.
If $g \in C_6$ with $\ord (g)=6$, then  $W=g^6$,  $V=g^4(2g)$, $-W$, and $-V$ are the  atoms of length at least $5$.
Since $\mathsf L ((-W)W)= \{2,6\}$, $\mathsf L ((-V)V)= \{2,4,5\}$ and $\mathsf L ((-W)V)= \mathsf L ((-V)W)=\{2,5\}$,
the claim follows.
\end{proof}

\begin{lemma} \label{3.2}~
\begin{enumerate}
\item $[2,3], [2,4], [3,6], [3,7], [4,9], [4,10], [4,11] \in \mathcal  L (C_2^5)$. Moreover,  $\{2,4\}$, $\{2,5\}$, $\{2,6\}$, $\{2,3,5\}$, and  $\{2,4,5\}$ are in $\mathcal  L (C_2^5)$.

\item For each $k \in \mathbb{N}_{\ge 2}$ we have  $[2k, 6k -4], [2k,6k-3],[2k,6k-2], [2k,6k-1] \in \mathcal  L (C_2^5)$.

\item For each $k \in \mathbb{N}$ we have  $[2k+1, 6k -2], [2k+1,6k-1],[2k+1,6k], [2k+1,6k+1] \in \mathcal  L (C_2^5)$.
\end{enumerate}
\end{lemma}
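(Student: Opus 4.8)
The plan is to construct, for each claimed interval, an explicit zero-sum sequence over $C_2^5$ whose set of lengths is that interval, using the distinguished atoms $U$, $U_I$, $V_I$ and the relations among them. The key observation is the ``doubling'' identity: since each $e_i$ and each $e_I$ has order $2$, the sequence $U_I V_I = e_I^2 \prod_{i \in I} e_i \prod_{i \in [0,5]\setminus I} e_i = e_I^2 \prod_{i=0}^5 e_i = e_I^2 \, U'$ where $U' = \prod_{i=0}^{5} e_i$ is itself a zero-sum sequence (indeed $e_0 = e_{[1,5]}$ forces $\sum_{i=0}^5 e_i = 0$), and $e_I^2 U'$ factors in several ways. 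More generally one gets identities of the form $U_{I_1}\cdots U_{I_k} V_{I_1}\cdots V_{I_k} = (\text{products of } e_i^2)\cdot (U')^k$, and $e_i^2$ is an atom while $U'$ has length $6$ with $\mathsf L(U') = \{1\}$ but $(U')^2$, $(U')^3$, etc.\ have longer sets of lengths. The first step is to pin down $\mathcal L$ of a few small building blocks: $(U')(-U') $-type products, squares $e_i^2$, and the mixed products $U_I V_J$, to get the short intervals $[2,3]$, $[2,4]$, $\{2,4\},\{2,5\},\{2,6\},\{2,3,5\},\{2,4,5\}$ in part (1). Several of these are quoted already or follow from easy direct computation with at most two atoms.

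Second, I would handle the ``base'' intervals $[3,6],[3,7]$ and $[4,9],[4,10],[4,11]$ in part (1) by combining a half-factorial core (a product of $e_i^2$'s and copies of $U'$, which stretches the length) with one of the small blocks above that supplies the lower end of the interval and the density of the progression; the point is that multiplying a set of lengths $L$ by adding a block with set of lengths $\{1\}$ shifts $L$, while multiplying by $(U')^j$ and rewriting via the $U_I V_I$ relations fills in a full interval of length $\approx 2j$ or $3j$. Concretely, each use of the relation $U_I V_I \leftrightarrow e_I^2 U'$ trades $2$ atoms for $2$ atoms but each use of a relation like $U U' \leftrightarrow (\text{six atoms } e_i^2)$ trades $2$ atoms for $6$, giving differences of $4$; iterating gives the arithmetic progressions with the stated endpoints. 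Once the base cases are in hand, parts (2) and (3) follow by induction on $k$: given $[2(k-1)+1, 6(k-1)+1] \in \mathcal L(C_2^5)$ say, one multiplies the witnessing sequence by a further copy of $U U'$ (or $(U')^2$, chosen to match parity and the exact right endpoint among $6k-2,6k-1,6k,6k+1$) and checks that the new set of lengths is again a full interval with the predicted endpoints — here one must be careful that no ``accidental'' shorter or longer factorizations appear, which is where the explicit structure of the atoms over $\{e_I, e_0,\dots,e_5\}$ is essential.

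The main obstacle I expect is the bookkeeping needed to prove that the set of lengths of each constructed sequence is \emph{exactly} the claimed interval, not merely contained in it or containing it. The lower bound $\min \mathsf L(B) \ge$ (endpoint) requires ruling out unexpectedly short factorizations; for sequences supported on $\{e_I : \emptyset \neq I \subsetneq [1,5]\} \cup \{e_0,\dots,e_5\}$ one can argue via a weight or ``norm'' function (an analogue over $C_2^5$ of the $g$-norm used for cyclic groups, e.g.\ counting total multiplicity of the $e_i$'s with $i \in [0,5]$) that is additive on factorizations and bounds lengths from below; the upper bound is usually easier since one just exhibits one long factorization. I would organize the section so that a single lemma establishes the key local relations and their effect on sets of lengths, and then parts (1)–(3) are assembled mechanically, with the induction step for (2) and (3) reduced to one multiplication by a fixed gadget. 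The case analysis in part (1) for the five exceptional finite sets $\{2,4\},\dots,\{2,4,5\}$ is short but must be done by hand, mirroring the computation in Lemma \ref{3.1}.
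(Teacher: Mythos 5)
Your overall architecture is the one the paper uses: explicit witnesses over the fixed basis, the arithmetic progression $\mathsf L(U^{2k})=2k+4\cdot[0,k]$, the sumset inclusion $\mathsf L(B_1)+\mathsf L(B_2)\subset\mathsf L(B_1B_2)$ for the lower containment, and crude length counting for the reverse containment. However, there are two concrete gaps. First, the mechanism you describe for ``filling in'' intervals does not work: the relation $U_IV_I=e_I^2U$ trades two atoms for two atoms (length difference $0$), and $U^2=\prod_{i=0}^5 e_i^2$ trades two for six (difference $4$), so every rewriting in your toolkit changes the number of atoms by a multiple of $4$. Consequently ``multiplying by $(U')^j$ and rewriting via the $U_IV_I$ relations'' produces only the arithmetic progression $2j+4\cdot[0,j]$, never a full interval. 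All consecutive lengths must already be present in a base block, and producing such blocks requires atoms of lengths $3$, $4$, $5$ supported on elements like $e_{\{i,j\}}$, $e_{\{i,j,k\}}$ and mixed sums --- e.g.\ the paper uses $U_1'U_2'U_3'$ with $\mathsf L=[3,7]$, $U_1'^2U_2'^2$ with $\mathsf L=[4,10]$, $U_1'^2U_2'U_4'$ with $\mathsf L=[4,9]$, and $U^2V_1V_2$ with $\mathsf L=[4,11]$, where $V_1=e_1e_2e_3e_4e_{\{3,4,5\}}e_{\{1,2,5\}}$ and $V_2=e_1e_{\{1,2\}}e_3e_4e_5e_{[2,5]}$. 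You neither construct these witnesses nor verify their sets of lengths, and the hardest one, $[4,11]$ (elasticity $11/4$, close to the maximum $\rho(C_2^5)=3$), is genuinely delicate: it rests on showing $\mathsf L(UV_1)=\{2,4,5\}$ and $\mathsf L(UV_2)=\{2,3,5\}$ by exhausting the atoms containing $e_{\{1,2,5\}}$ resp.\ $e_{\{1,2\}}$, and then finding one extra factorization of length $11$. Deferring the sets $\{2,3,5\}$ and $\{2,4,5\}$ to a computation ``mirroring Lemma~\ref{3.1}'' understates this: Lemma~\ref{3.1} concerns $C_6$, where there are only four long atoms; over $C_2^5$ the corresponding case analysis is the real content of part~1.

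Second, you have the upper-bound direction backwards. To prove $\mathsf L(B)=[a,b]$ one needs $\max\mathsf L(B)\le b$, which is \emph{not} obtained by ``exhibiting one long factorization'' (that only shows $b\in\mathsf L(B)$). The paper gets it from $\max\mathsf L(B)\le |B|/2$ when $0\nmid B$, sharpened to a strict inequality when $B$ is not a square of a product of the $e_i$ (this sharpening is needed, e.g., for $[2k,6k-1]$ where $|B|=12k$). Similarly $\min\mathsf L(B)\ge |B|/\mathsf D(C_2^5)=|B|/6$ is all that is needed for the lower bound --- no bespoke norm function is required. Once the base blocks are in place, your induction by multiplying with $U^2$ does go through (since $[a,b]+\{2,6\}=[a+2,b+6]$ whenever $b-a\ge4$, and the counting bounds transfer), so the skeleton is sound; what is missing is precisely the hand-crafted combinatorics that supplies intervals of lengths $4$, $5$, $6$, $7$ in the base cases.
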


\begin{proof}
1. We have $\mathsf L ( U_{[1,3]}^2) = \{2,4\}$, $\mathsf L ( U_{[1,4]}^2) = \{2,5\}$, and $\mathsf L ( U_{[1,5]}^2) = \{2,6\}$. All remaining sets, apart from
  $[4,11]$,  are already in $\mathcal L (C_2^4)$ (\cite[Theorem 4.8 and Proposition 4.10]{Ge-Sc-Zh17b}).
It remains to verify that $[4,11] \in \mathcal  L (C_2^5)$. To do so we define
\begin{equation} \label{V1V2}
 V_1 = e_1e_2e_3e_4e_{\{3,4,5\}}e_{\{1,2,5\}}, \quad \text{and} \quad V_2 = e_1e_{\{1,2\}}e_3e_4e_5e_{[2,5]}
\end{equation}
and we assert that $\mathsf L (U^2V_1V_2) = [4,11]$. Since $A = U^2V_1V_2$ is not a square and $|A|=24$, it follows that $\max \mathsf L (A) \le 11$ whence $\mathsf L (A) \subset [4,11]$. We assert  that $\mathsf L (UV_1) = \{2,4,5\}$ and that $\mathsf L (UV_2) = \{2,3,5\}$.

Let $z$ be a factorization of $UV_1$. The atoms, that divide $z$ and
 contain $e_{\{1,2,5\}}$,  are
$U_{\{1,2,5\}}$, $V_{\{1,2,5\}}$, $V_1$, and $V_1' = e_0e_5e_{\{3,4,5\}}e_{\{1,2,5\}}$.
If $z$ is divisible by  $U_{\{1,2,5\}}$, then $z = U_{\{1,2,5\}}V_{\{3,4,5\}}e_3^2e_4^2$, whence $|z|=4$.
If $z$ is divisible by  $V_{\{1,2,5\}}$, then $z=V_{\{1,2,5\}}U_{\{3,4,5\}}e_1^2e_2^2$, whence $|z|=4$.
If $z$ is divisible by  $V_1$, then $z=UV_1$, whence $|z|=2$.
If $z$ is divisible by   $V_1'$, then $z=V_1' e_1^2e_2^2e_3^2e_4^2$, whence $|z|=5$. This shows
$\mathsf L (UV_1) = \{2,4,5\}$.

Let $z$ be a factorization of  $UV_2$. The atoms, that divide $z$ and  contain $e_{\{1,2\}}$, are
$U_{\{1,2\}}$, $V_{\{1,2\}}$, $V_2$, and $V_2' = e_0e_2e_{[2,5]}e_{\{1,2\}}$.
If $z$ is divisible by   $U_{\{1,2\}}$, then $z=U_{\{1,2\}}V_{[2,5]}e_3^2e_4^2e_5^2$, whence $|z|=5$.
If $z$ is divisible by   $V_{\{1,2\}}$, then $z=V_{\{1,2\}}U_{[2,5]}e_1^2$, whence $|z|=3$
If $z$ is divisible by   $V_2$, then  $z=UV_2$, whence $|z|=2$.
If $z$ is divisible by  $V_2'$, then  $z=V_2' e_1^2e_3^2e_4^2e_5^2$, whence $|z|=5$. This shows
$\mathsf L (UV_2) = \{2,3,5\}$.

Consequently,  $[4,10] = \{2,3,5\}+\{2,4,5\} \subset \mathsf L (U^2 V_1V_2)$. Since
\[
V_1V_2 = \Big( e_1^2 \Big) \Big( e_3^2 \Big) \Big( e_4^2 \Big) \Big( e_{\{1,2,5\}}e_{\{1,2\}}e_5 \Big) \Big( e_{[2,5]}e_2e_{\{3,4,5\}} \Big) \,,
\]
and since $\mathsf L (U^2) = \{2,6\}$, it follows that $5 \in \mathsf L (V_1V_2)$, whence $11 \in \mathsf L (U^2V_1V_2)$.

\noindent
2. and 3.  (i) \underline{Claim 1:} $[2k+1,6k+1] \in \mathcal  L (C_2^5)$ for each $k \in \mathbb{N}$.

Let $k \in \N$. We recall from \cite[Proposition 4.10, proof of assertion A1]{Ge-Sc-Zh17b} that
\begin{equation} \label{U_1'}
U_1'  = e_{[1,4]} e_1\cdot \ldots \cdot e_4, \ U_2' = e_1e_2e_{\{1,3\}}e_{\{2,4\}}e_{\{3,4\}}, \quad \text{and} \quad U_3'  = e_{\{1,3\}}e_{\{2,4\}}e_{\{3,4\}}e_3e_4
\end{equation}
are atoms of lengths $5$ and that $\mathsf{L}(U_1'U_2'U_3') = [3,7]$.
We assert that  $\mathsf{L}(U^{2k-2}U_1'U_2'U_3') = [2k+1,6k+1]$.
Since $\mathsf{L}(U^{2k-2}) = 2k-2 + 4\cdot [0,k-1]$, it follows that
$2k-2 + 4\cdot [0,k-1] + [3,7] = [2k+1,6k+1] \subset \mathsf{L}(U^{2k-2}U_1'U_2'U_3')$. It remains to show the converse inclusion.
Since $|U^{2k-2}U_1'U_2'U_3'| = 6(2k-2) + 15$, it follows that
the minimal length is at least  $(12k+3)/6$ and as it is an integer it is at least $2k+1$. Moreover, as $0$ does not occur in this sequence, the maximal length is at most $(12k + 3)/2$ and thus it is at most $6k+1$.
This shows that $[2k+1,6k+1] \in \mathcal  L (C_2^5)$.

(ii) \underline{Claim 2:} $[2k, 6k -4] \in \mathcal  L (C_2^5)$ for each $k \ge 2$.

Since $1 + [2k + 1, 6k+1] = [2(k+1), 6(k+1)-4]$,  we conclude that    $[2k, 6k -4] \in \mathcal  L (C_2^5)$ for each $k \in \mathbb{N}_{\ge 2}$.

(iii) \underline{Claim 3:} $[2k,6k-1] \in \mathcal  L (C_2^5)$ for each $k \ge 2$.

Let $k \ge 2$. We consider $\mathsf L (U^{2k-2}V_1V_2)$ with $V_1, V_2$ as in \eqref{V1V2}.
Since $\mathsf L (U^{2}V_1V_2) = [4,11]$ and $\mathsf L (U^{2k-4})=2k-4 + 4 \cdot [0,k-2]$, it follows that
$2k-4 + 4 \cdot [0,k-2] + [4, 11] = [2k , 6k - 1] \subset \mathsf L (U^{2k-2}V_1V_2) $.
To prove the converse inclusion, it suffices to note that $|U^{2k-2}V_1V_2| = 12k$, and thus the minimal length it is at least $2k$.
Since the sequence does not contain $0$ and is not a square it follows that the maximal length is less than $6k$.

(iv) \underline{Claim 4:}   $[2k,6k-2] \in \mathcal  L (C_2^5)$ for each $k \ge 2$.

Let $k \ge 2$. We recall from \cite[Proposition 4.10, proof of assertion A3]{Ge-Sc-Zh17b} that $\mathsf{L}(U_1'^2 U_2'^2) = [4,10]$, where $U_1'$ and $U_2'$ are as defined above.
Now, we consider $\mathsf L (U^{2k-4}U_1'^2 U_2'^2)$, with $U_1'$ and $U_2'$ as above.
It follows that  $2k-4 + 4 \cdot [0,k-2] + [4, 10] = [2k , 6k - 2] \subset \mathsf L (U^{2k-4}U_1'^2 U_2'^2) $.
To prove the converse inclusion, it suffices to note that $|U^{2k-4}U_1'^2 U_2'^2| = 12k-4$, and thus the minimal length it is at least $(12k-4)/6$ and thus at least $2k$. Since the sequence does not contain $0$, it follows that that maximal length is at most $6k-2$ and the argument is complete.

(v) \underline{Claim 5:}   $[2k,6k-3] \in \mathcal  L (C_2^5)$  for each $k \ge 2$.

Let $k \ge 2$. We recall from \cite[Proposition 4.10, proof of assertion A2]{Ge-Sc-Zh17b} that $\mathsf{L}(U_1'^2 U_2'U_4') = [4,9]$, where $U_1'$ and $U_2'$ are as in \eqref{U_1'} and $U_4'=e_{\{1,2\}}e_{\{1,3\}}e_{\{2,4\}}e_{\{3,4\}}$.
We consider $\mathsf L (U^{2k-4}U_1'^2 U_2'U_4')$, and obtain    $2k-4 + 4 \cdot [0,k-2] + [4, 9] = [2k , 6k - 3] \subset \mathsf L (U^{2k-4}U_1'^2 U_2'U_4') $.
For converse inclusion, we note that $|U^{2k-4}U_1'^2 U_2'U_4'| = 12k-5$, and thus the minimal length  is  at least  $2k$ while the maximal length is at most $6k-3$.

(vi) \underline{Claim 6:}   $[2k+1, 6k -2], [2k+1,6k-1],[2k+1,6k] \in \mathcal  L (C_2^5)$ for each $k \in \N$.

Noting that  $[2k+1, 6k -2] =  1 + [2k, 6k -3]$, $ [2k+1,6k-1] = 1 + [2k, 6k-2]$, $[2k+1,6k] = 1 + [2k, 6k-1]$ and the latter intervals are  in $\mathcal L (C_2^5)$ for $k \ge 2$, it remains to study the case $k=1$.
By 1., we have $[3,6] \in \mathcal L (C_2^5)$. Moreover,  $[2,3], [2,4] \in \mathcal L (C_2^5)$, implies  $[3,4] = 1 + [2,3 \in \mathcal L (C_2^5)$ and $[3,5] = 1 + [2,4] \in \mathcal L (C_2^5)$.
\end{proof}

\begin{proposition} \label{3.3}
Every $L \in \mathcal{L}(C_6)$, that is  an interval, lies in  $\mathcal{L}(C_2^5)$.
\end{proposition}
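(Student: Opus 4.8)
The plan is to determine exactly which intervals occur in $\mathcal{L}(C_6)$ and then observe that the list of intervals established in Lemma \ref{3.2} already covers all of them. First I would recall from the structure theory of $\mathcal{L}(C_6)$ (equivalently, from $\rho(C_6) = \mathsf{D}(C_6)/2 = 3$ and the formulas for $\rho_k(C_6)$ in Proposition \ref{2.2}.3, where $C_6$ is cyclic so $\rho_{2k+1}(C_6) = 3k+1$) the constraints on an interval $[a,b] \in \mathcal{L}(C_6)$. Since $\rho([a,b]) = b/a \le \rho(C_6) = 3$, any interval in $\mathcal{L}(C_6)$ satisfies $b \le 3a$; more precisely, using $\rho_{2k}(C_6) = 6k$ and $\rho_{2k+1}(C_6) = 6k+1$, an interval with $\min L = 2k$ has $\max L \le 6k$, and an interval with $\min L = 2k+1$ has $\max L \le 6k+1$.

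Next I would handle the small cases $\min L \in \{2,3\}$ by hand. By Lemma \ref{3.1}, $[2,5] \notin \mathcal{L}(C_6)$, hence also $[2,b] \notin \mathcal{L}(C_6)$ for $b \ge 5$ (an interval containing $[2,5]$ would force $\{2,5\} \subset L$ with $L \ne \{2,5\}, \{2,4,5\}$, contradicting the lemma — or one argues directly via the $g$-norm that $\{2,4\}$ is the longest interval with minimum $2$ realized by a zero-sum sequence whose two atoms have length $\ge 5$ but which is not of the exceptional shapes). So the only intervals with minimum $2$ are $[2,2], [2,3], [2,4]$, all in $\mathcal{L}(C_2^5)$ by Lemma \ref{3.2}.1 and the trivial $\{2\} = \{1\}+\{1\}$. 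For minimum $3$, the elasticity bound gives $\max L \le 7$, so the candidates are $[3,3],\dots,[3,7]$; by Lemma \ref{3.2}.3 with $k=1$ together with part (vi) of its proof, $[3,4],[3,5],[3,6],[3,7]$ are all in $\mathcal{L}(C_2^5)$, and $\{3\}$ trivially is.

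Finally, for $\min L = 2k$ with $k \ge 2$, the elasticity bound restricts $\max L$ to lie in $\{2k, 2k+1, \dots, 6k\}$, but $\max L = 6k$ would force $\rho(L) = 3$, which for $\min L$ even is attained only by the non-interval $\mathsf{L}(U^{k}(-U)^{k})$-type sets (one checks a genuine interval of minimum $2k$ cannot have elasticity $3$ — its maximum is at most $6k-1$); so the possible intervals are among $[2k,2k],\dots,[2k,6k-1]$, and Lemma \ref{3.2}.2 provides $[2k,6k-4],[2k,6k-3],[2k,6k-2],[2k,6k-1]$ while the shorter ones $[2k,b]$ with $b \le 6k-5$ are dilations/translations already covered (or one notes any interval in $\mathcal{L}(C_6)$ with minimum $2k$ and length $\le 6k-4 - 2k$ is too short to need a new argument — these can be obtained from $[2,b-2k+2] + (2k-2)$ using $[2,3],[2,4] \in \mathcal{L}(C_2^5)$ and $\mathsf{L}(U^{2k-2}) = 2k-2 + 4\cdot[0,k-1]$ by a sumset argument analogous to the proof of Lemma \ref{3.2}). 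Likewise for $\min L = 2k+1$, $k \ge 2$: the bound gives $\max L \le 6k+1$, and Lemma \ref{3.2}.3 supplies $[2k+1,6k-2],\dots,[2k+1,6k+1]$, with shorter intervals again obtained by translation from the $[2,\cdot]$ and $[3,\cdot]$ cases.

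The main obstacle I anticipate is not the upper-bound/elasticity bookkeeping but rather making airtight the claim that the \emph{only} intervals genuinely occurring in $\mathcal{L}(C_6)$ are the ones enumerated — i.e.\ that there is no interval in $\mathcal{L}(C_6)$ of, say, intermediate length that fails to be a translate of a small one and fails to be on the Lemma \ref{3.2} list. This requires a clean structural description of sets of lengths over $C_6$ containing long intervals; the natural tool is the $g$-norm together with the observation (from $\mathsf{c}(C_6) \le \mathsf{D}(C_6) = 6$ and $\max\Delta(C_6) = 4$) that realizing a long interval forces the zero-sum sequence to be built predominantly from the atoms $g^6, g^4(2g)$ and their negatives, whose products were already analyzed in Lemma \ref{3.1}. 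I would organize this as: decompose $B$ as a product of two atoms if $2 \in \mathsf{L}(B)$ and invoke Lemma \ref{3.1}; otherwise reduce to the bounds above and close by translation. If a fully self-contained classification proves cumbersome, an acceptable shortcut is to cite the explicit description of $\mathcal{L}(C_6)$ available in the literature (it is one of the smallest non-half-factorial cyclic cases) and simply verify each listed interval lies in $\mathcal{L}(C_2^5)$ via Lemma \ref{3.2}.
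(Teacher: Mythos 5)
Your overall strategy is the same as the paper's: bound $\max L$ in terms of $\min L$ via the elasticities $\rho_k(C_6)$, rule out the extremal case $\max L = 3\min L$, and realize every remaining interval as a translate of an interval supplied by Lemma \ref{3.2}. Two remarks, the second of which is a genuine error. The minor one: your claim that an interval with $\min L = 2k$ cannot attain $\max L = 6k$ is asserted rather than proved. The argument (as in the paper) is that $\rho(\mathsf L(B)) = 3$ forces $0 \nmid B$ and forces every atom in a shortest factorization to have length $6$; since $g^6$ and $(-g)^6$ are the only such atoms, $\supp(B) \subset \{g,-g\}$, and $\Delta(\{g,-g\}) = \{4\}$, so $\mathsf L(B)$ is not an interval. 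You gesture at exactly this, so it only needs to be written out.

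The genuine problem is your treatment of the intermediate-length intervals $[2k,b]$ with $2k+4 \le b \le 6k-5$ (and their odd-minimum analogues). You propose to obtain these as $[2, b-2k+2] + (2k-2)$, but $[2,c] \notin \mathcal L(C_2^5)$ for $c \ge 6$: by Lemma \ref{4.1}.1, every $L \in \mathcal L(C_2^5)$ with $\{2,6\} \subset L$ equals $\{2,6\}$. Nor does the suggested sumset with $\mathsf L(U^{2k-2}) = 2k-2+4\cdot[0,k-1]$ help, since adding a $4$-periodic set to $[2,3]$ or $[2,4]$ produces a set with gaps, not an interval. Concretely, your recipe does not produce $[10,14]$. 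The correct reduction --- the one the paper uses --- is to translate from a Lemma \ref{3.2} interval with a \emph{smaller} minimum: given $[m,m+l] \in \mathcal L(C_6)$ with $l \ge 4$ and $m$ even, write $l = 4j-i$ with $i \in [1,4]$; the bound $l \le 2m-1$ forces $2j \le m$, so $[m,m+l] = (m-2j) + [2j, 6j-i]$ with the second summand in $\mathcal L(C_2^5)$ by Lemma \ref{3.2}.2 (for $m$ odd, take $i \in [0,3]$ and use $[2j+1,6j+1-i]$ from Lemma \ref{3.2}.3). For instance $[10,14] = 6 + [4,8]$. With this substitution your proof closes. Note also that the worry in your final paragraph is moot: one never needs to determine which intervals actually occur in $\mathcal L(C_6)$, only that the necessary conditions coming from $\rho_k(C_6)$ and Lemma \ref{3.1} confine any interval of $\mathcal L(C_6)$ to the family just realized in $\mathcal L(C_2^5)$.
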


\begin{proof}
Let $L \in \mathcal L (C_6)$ be an interval. The claim holds if $L$ is singleton. Suppose that $|L|\ge 2$.
We set $m = \min L$ and $n = \max L$. By Proposition \ref{2.2}, we have $\rho(C_6) = 3$,  $\rho_{m}(C_6) = 3m$ for even $m$, and  $\rho_{m}(C_6) = 3(m-1) + 1$ for odd $m$.
Thus $n \le 3m$, and we assert  that $n < 3m$. Assume to the contrary that $n = 3m$, and let $B \in \mathcal B (C_6)$ with $\mathsf L (B)=L$.
If $0 \mid B$, then $\min \mathsf{L}(0^{-1}B) = m-1$ and  $\max \mathsf{L}(0^{-1}B) = 3m-1$,
a contradiction to    $\rho(C_6) = 3$. Thus  $0 \nmid B$.  Moreover, each atom in a factorization of length $m$ must have length $6$.
However, the only two minimal zero-sum sequences of length $6$ over $C_6$ are $g^6$ and $(-g)^6$, where $g$ is a generating element of $C_6$. Thus $\supp(B) \subset \{-g, g\}$. However, $\Delta( \{-g, g\}) = \{4\}$, contradicting the assumption that $\mathsf{L}(B)=L$ is an interval.

We now write $n = m + l$ with $l \in \N$.
If $l\le 2$, then  $[m,m+l]=  (m-2) + [2, 2+l] \in \mathcal  L (C_2^5)$ by Lemma \ref{3.2}.
Suppose that  $l = 3$. Since $[2,5] \notin \mathcal L (C_6)$ by Lemma \ref{3.1}, it follows that  $m \ge 3$, whence $[m,m+l]=  (m-3) + [3, 6] \in \mathcal  L (C_2^5)$ by Lemma \ref{3.2}. Now we suppose  that $l \ge 4$ and distinguish two cases.

CASE 1: $m$ is even, say $m =2k'$.

By the argument above, we get $n < 3m = 6k'$.
Thus, $l \le 4k' - 1$, say $l = 4k - i$ with $k \in [2,k']$ and $i \in [1,4]$. Then $m-2k \ge 0$ and  $[m,m+l] = m-2k +[2k, 6k - i] \in \mathcal  L (C_2^5) $ by Lemma \ref{3.2}.

CASE 2: $m$ is odd, say $m =2k'+1$.

By the argument above, we get $n \le 6k'+1$. Thus $l \le 4k'$, say
$l = 4k - i $ with $k \in [1,k']$ and $i \in [0,3]$. Then $m-(2k+1) \ge 0$ and $[m,m+l] = m-(2k+1) +[2k+1, 6k+1 - i] \in \mathcal  L (C_2^5) $ by Lemma \ref{3.2}.
\end{proof}

\subsection{On AMPs with periods $\{0,1,4\}$ and $\{0,3,4\}$ in $\mathcal L (C_6)$ and $\mathcal L (C_2^5)$} \label{3.b}

The goal of this subsection is to show that all AMPs with period $\{0,1,4\}$ or with $\{0,3,4\}$, that lie in $\mathcal L (C_6)$, also lie in $\mathcal L (C_2^5)$.

\begin{lemma} \label{3.4}~
The sets $\{3,4,7\}, \ \{3,6,7\}, \ \{4,5,8,9\}, \ \{4,7,8,11\}$, and $\{5,8,9,12,13\}$ lie in $\mathcal L (C_6)$ and in  $\mathcal L (C_2^5)$.
\end{lemma}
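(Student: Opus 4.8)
The plan is to exhibit, for each of the five sets, an explicit zero-sum sequence $B$ over $C_6$ with $\mathsf L(B)$ equal to that set, and an explicit zero-sum sequence $B'$ over $C_2^5$ (built from the atoms $U$, $U_I$, $V_I$, $U_i'$, $V_i$ already introduced in this section) with $\mathsf L(B') $ equal to the same set. This is the same style of argument used in Lemmas \ref{3.1} and \ref{3.2}: the membership of a small finite set $L$ in $\mathcal L(G)$ is certified by a single witness $B$, and the hard direction is always ``$\mathsf L(B)\subset L$'', i.e.\ ruling out unexpected factorization lengths.

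For the $C_6$ side I would work with the generator $g$ of order $6$ and the long atoms $W=g^6$, $V=g^4(2g)$, together with their negatives, exactly as in Lemma \ref{3.1}. Products such as $W(-W)V(-V)$, $WV(-V)$, and similar combinations of two or three of these four atoms have sets of lengths that are AMPs with difference contained in $\{1,4\}$ once one also allows a few short atoms like $g^3(3g)$, $(3g)^2$, $g^2(2g)(2g)$, $g(2g)(3g)$; I would search among small such products (bounded length, since $\mathsf D(C_6)=6$ forces all atoms to have length $\le 6$) for witnesses of the five listed sets, and then verify $\mathsf L \subset L$ by the length/parity bounds $\|A\|_g/(n-1)\le \min\mathsf L(A)\le \max\mathsf L(A)\le \|A\|_g$ from the $g$-norm together with the observation that a factorization of maximal length must use many short atoms whose supports are constrained. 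Here $n=\ord(g)=6$.

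For the $C_2^5$ side I would proceed as in Lemma \ref{3.2}: note that $\{3,4,7\}$, $\{3,6,7\}$, and $\{4,5,8,9\}$, being AAMPs already appearing over $C_2^4$ by \cite[Theorem 4.8 and Proposition 4.10]{Ge-Sc-Zh17b}, lie in $\mathcal L(C_2^5)$ by the inclusion $\mathcal L(C_2^4)\subset\mathcal L(C_2^5)$; if one of them does not literally appear there, then I would instead write it as a sumset, e.g.\ $\{4,5,8,9\}=\{2,3\}+\{2,5\}$ or $\{2,4\}+\{2,5\}$ realized by a product of the relevant $U_I^2$, $V_i$ with a disjoint-support argument, using $\mathsf L(U^2)=\{2,6\}$ when a jump of $4$ is needed. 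For $\{4,7,8,11\}$ and $\{5,8,9,12,13\}$ I would build on the computations already carried out above: $\mathsf L(UV_1)=\{2,4,5\}$, $\mathsf L(UV_2)=\{2,3,5\}$, $\mathsf L(U^2V_1V_2)=[4,11]$. A natural candidate for $\{4,7,8,11\}$ is a product of $U$'s with two suitably chosen $V_I$'s (or $U_I'$'s) whose pairwise interaction contributes $\{3,4,7\}$-type gaps rather than filling the interval; for $\{5,8,9,12,13\}$, which is the sumset $\{3,4,7\}+\{2,5\}$ up to translation, I would take a product realizing $\{3,4,7\}$ (from the $C_6$-analogue / $C_2^4$ witness) times a $U_I$ or $V_I$ with disjoint support realizing a translate of $\{2,5\}$, so that the overall set of lengths is the sumset; the upper bound $\mathsf L\subset L$ again follows from $|B'|$ and the non-square property bounding $\max\mathsf L$.

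The main obstacle is the ``$\subset$'' direction for the two genuinely new five- and four-element sets over $C_2^5$, namely $\{4,7,8,11\}$ and $\{5,8,9,12,13\}$: one must show that no factorization of the chosen witness has a length in one of the forbidden gaps $\{5,6\}$ (resp.\ $\{6,7,10,11\}$). As in the proof of Lemma \ref{3.2}, the technique is to fix a coordinate $e_I$ of largest multiplicity appearing in the witness, enumerate the (few) atoms through $e_I$, and observe that each choice forces the rest of the factorization, so that only finitely many explicitly computable lengths occur; disjointness of supports in the chosen product keeps this enumeration short. The length and parity bounds ($\min\mathsf L\ge |B'|/\mathsf D(C_2^5)=|B'|/6$, and $\max\mathsf L<|B'|/2$ whenever $B'$ is not a square) dispose of the extreme cases, leaving only a bounded middle range to check by the coordinate argument.
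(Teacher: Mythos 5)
Your overall strategy (exhibit one witness per set and control its set of lengths by a case analysis on the atoms through a distinguished element) is the right one and is the paper's, but the proposal as written has genuine gaps and at least one outright error, so it does not yet constitute a proof. The most concrete error is the sumset decomposition you propose for the last set: $\{3,4,7\}+\{2,5\}=\{5,6,8,9,12\}$, which is not a translate of $\{5,8,9,12,13\}$ (the gap patterns are $(1,2,1,3)$ versus $(3,1,3,1)$); the decomposition that actually works is $\{5,8,9,12,13\}=\{3,6,7\}+\{2,6\}$, realized in the paper as $\mathsf L(U^2A_2)$ with $A_2=U^2(e_1\cdots e_4e_{[1,4]})$. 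A second structural problem is that your fallback constructions by \emph{disjoint-support} sumsets cannot exist in $C_2^5$: any $B$ with $\mathsf L(B)=\{2,5\}$ (or $\{2,6\}$) must contain an atom of length at least $5$, hence $\langle\supp(B)\rangle$ has rank at least $4$ (resp.\ $5$), leaving at most rank $1$ for a support-disjoint, subgroup-independent second factor, which is then half-factorial; so $\{2,3\}+\{2,5\}$, $\{2,4\}+\{2,5\}$, and $\{2,3\}+\{2,6\}$ are not realizable this way, and without independence of the generated subgroups one only gets $\mathsf L(B_1B_2)\supseteq\mathsf L(B_1)+\mathsf L(B_2)$, which is the easy inclusion. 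The witnesses genuinely have to interact, and then the hard direction is exactly the enumeration you defer.

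Beyond these two points, the proposal never actually produces and verifies a witness for any of the five sets on either side. For $C_6$ you describe a search among products of $g^6$, $g^4(2g)$ and their negatives together with unspecified short atoms; note that $\max\mathsf L=13$ forces $|A|\ge 26$, so the search space you sketch is too small or at least not pinned down, and the $g$-norm bounds alone do not rule out the forbidden intermediate lengths. The paper instead takes the single family $A=(2g)g^{v+4}(-g)^{w+2}$ with $v\equiv w+2\pmod 6$ and reads off $\mathsf L(A)$ from the closed formula in \cite[Lemma 3.6]{Ge-Sc19d}, then specializes $(v,w)$ five times. For $C_2^5$ your reliance on $\{3,4,7\},\{3,6,7\},\{4,5,8,9\}$ already lying in $\mathcal L(C_2^4)$ is unverified (and is presumably false, which is why the paper constructs them directly over $C_2^5$ as $\mathsf L(U^2(e_1e_2e_{[1,2]}))$, $\mathsf L(U^2(e_1\cdots e_4e_{[1,4]}))$, and $\mathsf L(U^3U_{[1,2]})$), and for $\{4,7,8,11\}$ you offer only ``a natural candidate''; the paper's witness is $U^3V_{[1,2]}$, and in each case the claimed equality is checked by rewriting the witness in the few possible ways, which is the step your proposal leaves entirely open.
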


\begin{proof}
1. First we show that the  given sets lie in $\mathcal L (C_6)$. Let $g \in G$ with $\ord (g)=6$ and set
\[
A = (2g)g^{v+4}(-g)^{w+2} \,, \quad \text{where $v , w \in \N_0$ with $v\equiv w +2 \mod{6}$.}
\]

CASE 1: $v \equiv 0 \mod{6}$.

By \cite[Lemma 3.6]{Ge-Sc19d}, we have
\[
\begin{aligned}
\mathsf L (A) & = 1 + (\mathsf L (  g^v (-g)^{w+2} )  \cup  \mathsf L (  g^{v+4} (-g)^{w})) , \\
\mathsf L ( g^v (-g)^{w+2}) & = \frac{v+w+2}{6} + 4 \cdot [0, \frac{\min\{v,w+2\}}{6}] , \quad \text{and} \\
\mathsf L ( g^{v+4} (-g)^{w}) &  = 3 + \frac{v+w+2}{6} + 4 \cdot [0, \frac{\min\{v,w-4\}}{6}] \,.
\end{aligned}
\]
If $v=6$ and $w=4$, then $\mathsf L (A) = \{3,6,7\}$.
If $v=6$ and $w=10$, then $\mathsf L (A) = \{4,7,8,11\}$. If $v=12$ and $w=10$, then $\mathsf L (A) = \{5,8,9,12,13\}$.

CASE 2:  $v \equiv 2 \mod{6}$.

By \cite[Lemma 3.6]{Ge-Sc19d}, we have
\[
\begin{aligned}
\mathsf L (A) & = 1 + (\mathsf L (  g^v (-g)^{w+2} )  \cup  \mathsf L (  g^{v+4} (-g)^{w})) , \\
\mathsf L ( g^v (-g)^{w+2})&  =  1 + \frac{v+4 +  w}{6} + 4 \cdot [0, \frac{\min\{v-2,w\}}{6}] , \quad \text{and} \\
\mathsf L ( g^{v+4} (-g)^{w}) & =  \frac{v+4 +w}{6} + 4 \cdot [0, \frac{\min\{v+4,w\}}{6}] \,.
\end{aligned}
\]
If $v=2$ and $w=6$, then $\mathsf L (A) = \{3,4,7\}$. If $v=8$ and $w=6$, then $\mathsf L (A) = \{4,5,8,9\}$.

2. Now we show that the sets lie in $\mathcal L (C_2^5)$. If $A_1 = U^2 ( e_1  e_2 e_{[1,2]})$, then
\[
\begin{aligned}
A_1 & = U e_1^2  e_2^2 V_{[1,2]}  = e_1^2 \cdot \ldots \cdot e_5^2  e_0^2 U_{[1,2]} \,,
\end{aligned}
\]
whence $\mathsf L (A_1) = \{3,4,7\}$. If $A_2 = U^2 ( e_1 \cdot \ldots \cdot e_4 e_{[1,4]})$, then
\[
\begin{aligned}
A_2 & = U e_1^2 \cdot \ldots \cdot e_4^2  V_{[1,4]}  = e_1^2 \cdot \ldots \cdot e_5^2  e_0^2 U_{[1,4]} \,,
\end{aligned}
\]
whence $\mathsf L (A_2) = \{3,6,7\}$. Note that $\mathsf L (U^2A_2) = \{3,6,7\}+\{2,6\} = \{5,8,9,12,13\}$.
 If $A_3 = U^3 U_{[1,2]}$, then
\[
\begin{aligned}
A_3 & = U^3 U_{[1,2]}  = U e_1^2 \cdot \ldots \cdot e_5^2 e_{0}^2 U_{[1,2]}  = U^2 e_1^2 e_2^2 V_{[1,2]}  = (e_1^2)^2 (e_2^2)^2 e_3^2 e_4^2 e_5^2 e_{0}^2 V_{[1,2]} \,,
\end{aligned}
\]
whence $\mathsf L (A_3) = \{4,5,8,9\}$. If $A_4=U^3 V_{[1,2]}$, then
\[
\begin{aligned}
A_4 & = U^3 V_{[1,2]}  = U^2 U_{[1,2]} e_3^2 e_4^2 e_5^2 e_{0}^2
   = U e_{0}^2e_1^2 e_2^2e_3^2e_4^2e_5^2  V_{[1,2]}  = U_{[1,2]} e_1^2 e_2^2 (e_3^2)^2 (e_4^2)^2 (e_5^2)^2 (e_0^2)^2 \,,
\end{aligned}
\]
whence $\mathsf L (A_4) = \{4,7,8,11\}$.
\end{proof}

\begin{proposition} \label{3.5}~
Every $L \in \mathcal L (C_6)$, that is an {\rm AMP} with period $\{0,3,4\}$, lies in $\mathcal L (C_2^5)$.
\end{proposition}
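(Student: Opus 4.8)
The plan is the following. Let $L\in\mathcal L(C_6)$ be an {\rm AMP} with period $\{0,3,4\}$, so its difference is $d=4$. If $L$ is a singleton it lies in $\mathcal L(C_2^5)$ trivially, so assume $|L|\ge 2$; put $m=\min L$ and let $\ell$ be the length of $L$. Inspecting the consecutive elements of $\min L+\{0,3,4\}+4\Z$ starting at $\min L$, one sees that $L$ has one of the two shapes
\[
L=m+\{0,3,4,7,8,\ldots,4\ell\}\quad\text{or}\quad L=m+\{0,3,4,7,8,\ldots,4\ell,4\ell+3\},
\]
which I call the \emph{short} and the \emph{long} case (in the long case $\ell=0$ is allowed and then $L=\{m,m+3\}$). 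First I would bound $m$ from below in terms of $\ell$, and then, for each admissible pair $(m,\ell)$, exhibit an explicit zero-sum sequence over $C_2^5$ with set of lengths $L$, built from the atoms already used in Lemma \ref{3.4}.

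For the lower bound, note $m\ge 2$ (otherwise $L=\{0\}$ or $L$ is a singleton). By Proposition \ref{2.2} we have $\rho_m(C_6)=3m$ for even $m$ and $\rho_m(C_6)=3m-2$ for odd $m$, so $\max L\le\rho_m(C_6)$ forces $m\ge 2\ell+1$ in the short case and $m\ge 2\ell+2$ in the long case with $\ell\ge1$ — with the sole exception that for even $m$ in the short case the estimate only gives $m\ge 2\ell$. In that borderline situation $\rho(L)=3=\rho(C_6)$, and here I would reuse the argument from the proof of Proposition \ref{3.3}: a realizing $B\in\mathcal B(C_6)$ has $0\nmid B$ (else removing $0$ would force elasticity $>3$), so every atom in a shortest factorization of $B$ has length $6$, hence $\supp(B)\subseteq\{g,-g\}$ for a generating element $g$ of $C_6$, and then $\mathsf L(B)$ is an arithmetic progression with difference $4$, not an {\rm AMP} with period $\{0,3,4\}$ — a contradiction. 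Thus $m\ge 2\ell+1$ holds in the short case as well. (For $\ell=1$, $m=2$ one may instead invoke Lemma \ref{3.1}.)

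For the construction I would use the length-$5$ atoms $U_{[1,4]}=e_{[1,4]}e_1e_2e_3e_4$ (so that $A_2=U^2U_{[1,4]}$ in Lemma \ref{3.4}) and $V_{[1,2]}=e_{[1,2]}e_0e_3e_4e_5$ (so that $A_4=U^3V_{[1,2]}$), and prove
\begin{align*}
\mathsf L\bigl(U^{2\ell}\,U_{[1,4]}\,(e_5^2)^{m-2\ell-1}\bigr)&=L\qquad(\text{short case},\ \ell\ge1),\\
\mathsf L\bigl(U^{2\ell+1}\,V_{[1,2]}\,(e_3^2)^{m-2\ell-2}\bigr)&=L\qquad(\text{long case},\ \ell\ge0),
\end{align*}
the adjoined exponents being nonnegative by the lower bound just established. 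To compute these sets of lengths one first determines \emph{all} atoms over $C_2^5$ whose support lies in $\{e_0,\ldots,e_5,e_{[1,4]}\}$ (respectively $\{e_0,\ldots,e_5,e_{[1,2]}\}$): besides the squares $h^2$ they are only $U$, $U_{[1,4]}$, $V_{[1,4]}$ (respectively $U$, $V_{[1,2]}$, $U_{[1,2]}$), since over an elementary $2$-group every atom is a square or is squarefree, so the task reduces to listing the minimal zero-sum subsets of a fixed seven-element set. As $e_{[1,4]}$ (respectively $e_{[1,2]}$) occurs with multiplicity $1$, every factorization uses exactly one of the two atoms meeting it; writing down and solving the resulting linear system for the multiplicities of $U$ and of the squares $e_i^2$ shows that the set of lengths is precisely the union of two difference-$4$ arithmetic progressions, and this union is exactly $L$.

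The step I expect to be the main obstacle is this last verification — the complete enumeration of atoms and the bookkeeping of factorization lengths, which is the ``careful case by case analysis'' the introduction alludes to. Two points deserve attention: being certain the list of relevant atoms is exhaustive (which relies on the square-or-squarefree dichotomy over $C_2^5$, reducing everything to listing minimal zero-sum subsets of an explicit small set), and the borderline short case with even $m=2\ell$, which genuinely needs the $\rho(C_6)=3$ refinement and is not covered by the crude elasticity estimate.
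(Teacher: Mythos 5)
Your proposal is correct and follows essentially the same route as the paper: the same dichotomy into the two possible shapes of an AMP with period $\{0,3,4\}$, the same elasticity bounds $\rho_m(C_6)$ to pin down $\min L$ (including the special treatment of the borderline $\rho(L)=3$ case via the fact that such sets are arithmetic progressions with difference $4$), and realizing sequences $U^{2\ell}U_{[1,4]}$ and $U^{2\ell+1}V_{[1,2]}$ that coincide with the paper's $A_2U^{2k}$ and $A_4U^{2(k-1)}$ from Lemma \ref{3.4}. The only differences are cosmetic: you append squares $(e_i^2)^j$ instead of reducing to the minimal value of $m$, and you verify the sets of lengths by direct enumeration of the atoms supported on the relevant seven-element set (which is exactly how Lemma \ref{3.4} is proved, and your enumeration and length bookkeeping do check out).
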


\begin{proof}
Let $L \in \mathcal L (C_6)$ be an AMP with period $\{0,3,4\}$ and set $m = \min L$. If $L$ is a singleton, then the claim holds. Suppose that $L$ is not a singleton. Then there is $k \in \N_0$ such that $L$ has one of the following two forms.
\begin{itemize}
\item $L = \{m,m+3\} + 4 \cdot [0,k]$.

\item $L = \big( \{m,m+3\} + 4 \cdot [0,k] \big) \cup \{m+4(k+1) \}$.
\end{itemize}
We distinguish two cases.

CASE 1: $L = \{m,m+3\} + 4 \cdot [0,k]$ for some $k \in \N_0$.

Then
\[
\rho (L) = \frac{m+4k+3}{m} \le \rho (C_6) = 3 \,,
\]
whence $m \ge 2k+2$. Clearly, if $\{2k+2, 2k+5\} + 4 \cdot [0,k] \in \mathcal L (C_2^5)$, then the same is true for $L$. Thus we may assume that $m = 2k+2$. If $k=0$, then $L=\{2,5\} \in \mathcal L (C_2^5)$ by Lemma \ref{3.2}. Now suppose that $k \ge 1$. Then we have
\[
\begin{aligned}
L = \{2k+2, 2k+5\} + 4 \cdot [0,k] & = \{2k+2, 2k+5, 2k+6, 2k+9\} + 4 \cdot [0,k-1] \\
 & =  \{4,7,8,11\} + 2(k-1)+ 4 \cdot [0,k-1] \,.
\end{aligned}
\]
If  $A_4 = U^3 V_{[1,2]}$, then $\mathsf L (A_4) = \{4,7,8,11\}$ by   Lemma \ref{3.4}, whence
\[
\mathsf L \big( A_4 U^{2(k-1)}  \big) = \mathsf L (C) + \mathsf L \big( U^{2(k-1)}  \big) = \{4,7,8,11\} + 2(k-1)+ 4 \cdot [0,k-1] = L \,.
\]

\smallskip
CASE 2: $L = \big( \{m,m+3\} + 4 \cdot [0,k] \big) \cup \{m+4(k+1)\}$ for some $k \in \N_0$.

Then
\[
\rho (L) = \frac{m+4k+4}{m} \le \rho (C_6) = 3 \,,
\]
whence $m \ge 2k+2$. Assume to the contrary that  $m = 2k+2$.  Then $\rho (L)=3$. Since every set $L_0 \in \mathcal L (C_6)$ with $\rho (L_0) = 3$ is an arithmetical progression with difference $4$, it follows that  $\rho (L)<3$ and this  implies  $m \ge 2k+3$. Clearly, if $\big( \{2k+3, 2k+6\} + 4 \cdot [0,k] \big) \cup \{6k+7\} \in \mathcal L (C_2^5)$, then the same is true for $L$. Thus we may assume that $m = 2k+3$. Then we have
\[
\begin{aligned}
L & = \big( \{2k+3, 2k+6\} + 4 \cdot [0,k] \big) \cup \{6k+7\} \\
  & =  \{3,6\} + 2k + 4 \cdot [0,k] \cup \{6k+7\}  =  \{3,6,7\} + 2k + 4 \cdot [0,k] \,.
\end{aligned}
\]
If  $A_2 = U^2 U_{[1,4]}$, then $\mathsf L (A_2) = \{3,6,7\}$ by Lemma \ref{3.4}, whence
\[
\mathsf L \big( A_2 U^{2k} \big) = \{3,6,7\} + 2k + 4 \cdot [0,k] = L \,. \qedhere
\]
\end{proof}

\begin{proposition} \label{3.6}~
Every $L \in \mathcal L (C_6)$, that is an {\rm AMP} with period $\{0,1,4\}$, lies in $\mathcal L (C_2^5)$.
\end{proposition}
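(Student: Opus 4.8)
The plan is to mimic the structure of the proof of Proposition \ref{3.5} for AMPs with period $\{0,3,4\}$, adapting it to period $\{0,1,4\}$. Let $L \in \mathcal L (C_6)$ be an AMP with period $\{0,1,4\}$, set $m = \min L$, and assume $L$ is not a singleton. Then there is some $k \in \N_0$ such that $L$ has one of the two forms $L = \{m,m+1\} + 4 \cdot [0,k]$ or $L = \big( \{m,m+1\} + 4 \cdot [0,k] \big) \cup \{m+4(k+1)\}$; in the second case one should also worry about a possible extra element on the left, i.e. $L$ could a priori equal $\{m-3\} \cup \big( \{m,m+1\} + 4\cdot[0,k]\big)$ or combinations thereof, but such sets, after a suitable translation, will be handled by the same building blocks, so I will organize the case distinction around the ``core'' shape $\{0,1\} + 4\cdot[0,k]$ possibly with one boundary term on either end. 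In each case I would compute $\rho(L)$, invoke $\rho(C_6) = 3$ (Proposition \ref{2.2}) to get a lower bound on $m$ in terms of $k$, and then, exactly as in the previous two proofs, use the translation-invariance of $\mathcal L(C_2^5)$ under adding $\mathsf L(U^{2j}) = 2j + 4\cdot[0,j]$ to reduce to the extremal case where $m$ is as small as the elasticity bound permits.

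The heart of the argument is then to exhibit, for the extremal value of $m$, a zero-sum sequence over $C_2^5$ realizing $L$. By the reduction, it suffices to produce a fixed ``seed'' set — an AMP of period $\{0,1,4\}$ and small length lying in $\mathcal L(C_2^5)$ — and then add powers of $U$ to stretch it. So first I would establish an analogue of Lemma \ref{3.4}: a short list of sets such as $\{4,5,8\}$, $\{4,5,8,9\}$ (already in Lemma \ref{3.4}), $\{5,6,9,10\}$, and whatever singleton-boundary variants are needed, all shown to lie in $\mathcal L(C_2^5)$ by writing down explicit products of the atoms $U$, $U_I$, $V_I$, $U_i'$ from \eqref{U_1'} and \eqref{V1V2} and computing their sets of lengths by the same divisibility bookkeeping used throughout Section \ref{3.a} and in the proof of Lemma \ref{3.4}. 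The key numeric identities to check are of the form $(\text{seed}) + 2j + 4\cdot[0,j] = L$, which follow formally once the seed is correct, together with the length/square parity bounds $|B|/6 \le \min\mathsf L(B)$ and $\max\mathsf L(B) \le |B|/2$ (strict when $0 \nmid B$ and $B$ is not a square) to pin down the converse inclusions.

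For the case $L = \{m,m+1\} + 4\cdot[0,k]$: here $\rho(L) = (m+4k+1)/m \le 3$ forces $m \ge 2k+1$, actually $m\ge 2k+2$ since $m=2k+1$ would make the set a progression of difference $4$ with elasticity issues; after translating, the extremal set is (up to shift) $\{4,5\}+4\cdot[0,k] + 2(k-1)$-type, realized by $U^{2(k-1)}$ times a seed with $\mathsf L = \{4,5,8,9\}$ (namely $A_3 = U^3 U_{[1,2]}$ from Lemma \ref{3.4}) for $k\ge 1$, with $k=0$ giving $\{2,3\}\in\mathcal L(C_2^5)$ by Lemma \ref{3.2}. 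For the case with the extra top term, $\rho(L) = (m+4k+2)/m \le 3$ gives $m \ge 2k+1$; if $m = 2k+1$ then $\rho(L) = 3$, but every $L_0 \in \mathcal L(C_6)$ with $\rho(L_0)=3$ is an arithmetical progression with difference $4$ (the same input used in Proposition \ref{3.5}), a contradiction, so $m \ge 2k+2$, and the extremal set $\big(\{2k+2,2k+3\}+4\cdot[0,k]\big)\cup\{6k+6\}$ should be realized as $U^{2k}$ times a seed with $\mathsf L = \{2,3,6\}$ or $\{4,5,8\}$ — I would need to locate or construct such a seed over $C_2^5$ analogous to $A_1, A_2$ of Lemma \ref{3.4}. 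The main obstacle I anticipate is precisely this last point: finding the handful of correct small seed sequences over $C_2^5$ whose sets of lengths are exactly the required short period-$\{0,1,4\}$ AMPs (including the awkward ones with a boundary term only on one side), and ruling out spurious factorization lengths in those seeds — this is the same kind of delicate explicit computation as in Lemma \ref{3.2} and Lemma \ref{3.4}, but one must be careful that no unexpected intermediate length appears, which would break the ``interval plus difference-$4$ tail'' shape. Everything after the seeds are in hand is the now-routine translation-by-$U^{2j}$ argument together with the elasticity bound from Proposition \ref{2.2}.
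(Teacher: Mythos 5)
Your overall strategy is the paper's: use the elasticity bound $\rho(C_6)=3$ to pin down the minimal possible value of $m=\min L$, translate by $\mathsf L(U^{2j})=2j+4\cdot[0,j]$, and realize the extremal set by multiplying a seed from Lemma \ref{3.4} by a power of $U$. Your Case 1 is essentially the paper's argument, though your justification for $m\ge 2k+2$ is garbled: the set $\{2k+1,2k+2\}+4\cdot[0,k]$ is not an arithmetical progression with difference $4$ and has elasticity $(6k+2)/(2k+1)<3$, so neither of the reasons you allude to applies; the correct exclusion is that $\max L=6k+2$ would exceed $\rho_{2k+1}(C_6)=6k+1$ (Proposition \ref{2.2}.3). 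Also, your worry about an ``extra element on the left'' is vacuous: an AMP with period $\{0,1,4\}$ and difference $4$ is by definition an interval of $\min L+\{0,1\}+4\Z$, so the two listed shapes are exhaustive; a set such as $\{m-3\}\cup\big(\{m,m+1\}+4\cdot[0,k]\big)$ is an AMP with period $\{0,3,4\}$ and is covered by Proposition \ref{3.5}.

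The genuine gap is in Case 2. There $\max L=m+4(k+1)=m+4k+4$, not $m+4k+2$, so $\rho(L)\le 3$ forces $m\ge 2k+2$, and $m=2k+2$ yields $\rho(L)=3$ for a set that is not an arithmetical progression with difference $4$, hence in fact $m\ge 2k+3$. Your miscomputation stops you one step short, at the target set $\big(\{2k+2,2k+3\}+4\cdot[0,k]\big)\cup\{6k+6\}$, which has elasticity $3$ and therefore does not lie in $\mathcal L(C_6)$ at all; worse, the seed $\{2,3,6\}$ you would need is not in $\mathcal L(C_2^5)$ either, since by Lemma \ref{4.1}.1 any $L\in\mathcal L(C_2^5)$ with $\{2,6\}\subset L$ equals $\{2,6\}$. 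So this branch of your plan cannot be completed as written. With the correct bound $m\ge 2k+3$ the extremal set becomes $\{3,4,7\}+2k+4\cdot[0,k]$, and the required seed $\{3,4,7\}=\mathsf L(A_1)$ with $A_1=U^2(e_1e_2e_{[1,2]})$ is already supplied by Lemma \ref{3.4}; the ``main obstacle'' you anticipate, namely constructing new boundary seeds, does not arise.
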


\begin{proof}
Let $L \in \mathcal L (C_6)$ be an AMP with period $\{0,1,4\}$ and set $m = \min L$. If $L$ is a singleton, then the claim holds. Suppose that $L$ is not a singleton. Then there is $k \in \N_0$ such that $L$ has one of the following two forms.
\begin{itemize}
\item $L = \{m,m+1\} + 4 \cdot [0,k]$.

\item $L = \big( \{m,m+1\} + 4 \cdot [0,k] \big) \cup \{m+4(k+1) \}$.
\end{itemize}
We distinguish two cases.

CASE 1: $L = \{m,m+1\} + 4 \cdot [0,k]$ for some $k \in \N_0$.

Then
\[
\rho (L) = \frac{m+4k+1}{m} \le \rho (C_6) = 3 \,,
\]
whence $m \ge 2k+1$. If $m=2k+1$, then $L= \{2k+1, 2k+2\} + 4 \cdot [0,k]$ and thus
\[
\max L = 6k+2 \le \rho_{2k+1}(C_6) = 6k+1 \,,
\]
a contradiction. Thus $m \ge 2k+2$. Clearly, if $\{2k+2, 2k+3\} + 4 \cdot [0,k] \in \mathcal L (C_2^5)$,  then the same is true for $L$. Thus we may assume that $m = 2k+2$. If $k=0$, then $L = \{2,3\} \in \mathcal L (C_2^5)$ by Lemma \ref{3.2}. Suppose that $k \ge 1$. Then
\[
\begin{aligned}
L & = \{2k+2, 2k+3\} + 4 \cdot [0,k] \\
  & = \{2k+2, 2k+3, 2k+6, 2k+7\} + 4 \cdot [0, k-1] \\
  & = \{2(k-1)+4, 2(k-1)+5, 2(k-1)+8, 2(k-1)+9\} + 4 \cdot [0,k-1] \\
  & = \{4,5,8,9\} + 2(k-1) + 4 \cdot [0, k-1] \,.
\end{aligned}
\]
If  $A_3 = U^3 U_{[1,2]}$, then $\mathsf L (A_3) = \{4,5,8,9\}$ by Lemma \ref{3.4}, whence
\[
\mathsf L \big( A_3 U^{2k-2} \big) = \{3,4,7\} + 2(k-1) + 4 \cdot [0,k-1] = L \,.
\]

\smallskip
CASE 2: $L = \big( \{m,m+1\} + 4 \cdot [0,k] \big) \cup \{m+4(k+1) \}$ for some $k \in \N_0$.

Then
\[
\rho (L) = \frac{m+4k+4}{m} \le \rho (C_6) = 3 \,,
\]
whence $m \ge 2k+2$. Assume to the contrary that  $m = 2k+2$.  Then $\rho (L)=3$. Since every set $L_0 \in \mathcal L (C_6)$ with $\rho (L_0) = 3$ is an arithmetical progression with difference $4$, it follows that  $\rho (L)<3$ and this  implies  $m \ge 2k+3$. Clearly, if $\big( \{2k+3, 2k+4\} + 4 \cdot [0,k] \big) \cup \{6k+7\} \in \mathcal L (C_2^5)$, then the same is true for $L$. Thus we may assume that $m = 2k+3$. Then we have
\[
\begin{aligned}
L & = \big( \{2k+3, 2k+4\} + 4 \cdot [0,k] \big) \cup \{6k+7\} \\
  & =  \{3,4\} + 2k + 4 \cdot [0,k] \cup \{6k+7\}  =  \{3,4,7\} + 2k + 4 \cdot [0,k] \,.
\end{aligned}
\]
If  $A_1 = U^2 U_{[1,2]}$, then $\mathsf L (A_1) = \{3,4,7\}$ by Lemma \ref{3.4}, whence
\[
\mathsf L \big( A_1 U^{2k} \big) = \{3,4,7\} + 2k + 4 \cdot [0,k] = L \,. \qedhere
\]
\end{proof}

\subsection{On AMPs with periods $\{0,1,2,4\}$, $\{0,1,3,4\}$, and $\{0,2, 3,4\}$ in $\mathcal L (C_6)$ and $\mathcal L (C_2^5)$} \label{3.c}

The goal of this subsection is to show that all AMPs with period $\{0,1,2,4\}$,  $\{0,1,3,4\}$, or $\{0,2,3,4\}$, that lie in $\mathcal L (C_6)$, also lie in $\mathcal L (C_2^5)$.
We start by determining all AMPs with these periods in $\mathcal L (C_6)$. To this end we make use of the arguments in \cite{Ge-Sc19d}, which already contains a fairly precise description of theses sets but stops short of giving a full characterization.

\begin{proposition} \label{AMP4_C6}
Let $L \in \mathcal{L}(C_6)$ with $|L|\ge 4$.
\begin{enumerate}
\item If $L$ is an {\rm AMP} with period $\{0,1,2,4\}$, then $L$  equals one of the following sets for some $y, k \in \mathbb{N}_0${\rm :}
\begin{enumerate}
\item $y + 2k + \{4,5,6,8\} + 4 \cdot [0,k]$.
\item $y + 2k + \{4,5,6,8,9\} + 4 \cdot [0,k]$.
\item $y + 2k + \{5,6,7,9,10,11\} + 4 \cdot [0,k]$.
\end{enumerate}
\item If $L$ is an {\rm AMP} with period $\{0,1,3,4\}$, then $L$  equals one of the following sets for some $y, k \in \mathbb{N}_0${\rm :}
\begin{enumerate}
\item $y + 2k + \{3,4,6,7\} + 4 \cdot [0,k]$.
\item $y + 2k + \{4,5,7,8,9\} + 4 \cdot [0,k]$.
\item $y + 2k + \{5,6,8,9,10,12\} + 4 \cdot [0,k]$.
\end{enumerate}
\item If $L$ is an {\rm AMP} with period $\{0,2,3,4\}$, then $L$  equals one of the following sets for some $y, k \in \mathbb{N}_0${\rm :}
\begin{enumerate}
\item $y + 2k + \{3,5,6,7\} + 4 \cdot [0,k]$.
\item $y + 2k + \{4,6,7,8,10\} + 4 \cdot [0,k]$.
\item $y + 2k + \{4,6,7,8,10,11\} + 4 \cdot [0,k]$.
\end{enumerate}
\end{enumerate}
Moreover, all these sets actually are elements of $\mathcal{L}(C_6)$.
\end{proposition}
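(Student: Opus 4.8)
The plan is to prove the two halves of the statement separately: the structural half, that every $L\in\mathcal{L}(C_6)$ with $|L|\ge 4$ which is an AMP with one of the three given periods appears in the displayed list; and the realization half, that every set in the list does occur. For the structural half I would first normalize. Writing $C_6=\langle g\rangle$, every $B\in\mathcal{B}(C_6)$ has the form $B=0^{y}\,g^{a}(2g)^{b}(3g)^{c}(4g)^{d}(5g)^{e}$, and since $\mathsf{L}(0^{y}C)=y+\mathsf{L}(C)$ for every $C\in\mathcal{B}(C_6)$, the parameter $y$ in the statement is accounted for by the multiplicity of $0$; so I may assume $0\nmid B$, and, using the automorphism $g\mapsto -g$, that $a\ge e$. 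Next I recall that the only atoms supported on $\{g,-g\}$ are $g^{6}$, $(-g)^{6}$ and $g(-g)$, so the only mechanism producing a difference $4=\exp(C_6)-2$ in $\mathsf{L}(B)$ is the block exchange $g^{6}(-g)^{6}\leftrightarrow\big(g(-g)\big)^{6}$, which also yields $\mathsf{L}\big((g^{6}(-g)^{6})^{k}\big)=2k+4\cdot[0,k]$. Invoking the analysis of $\mathcal{L}(C_6)$ in \cite{Ge-Sc19d} (in particular the computation of $\mathsf{L}(g^{v}(-g)^{w})$ recalled in the proof of Lemma \ref{3.4}), an $L$ of difference $4$ forces the portion of $B$ carried by $2g,3g,4g$, together with a bounded remainder of $g$'s and $-g$'s, to lie in a fixed finite list of seed configurations $T$, with $\mathsf{L}(B)=\mathsf{L}\big(T\,g^{6k}(-g)^{6k}\big)=\mathsf{L}(T)+2k+4\cdot[0,k]$ once the long-atom blocks are absorbed.

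This reduces the structural half to a finite computation: for each seed $T$ one evaluates $\mathsf{L}(T)$ by tracking the atoms through $2g$, $3g$, $4g$ exactly as in the proofs of Lemmas \ref{3.1} and \ref{3.4}, adds $2k+4\cdot[0,k]$, and records when the result is an AMP with period of size $4$. The elasticity data $\rho(C_6)=3$ and $\rho_{2k+1}(C_6)=6k+1$ from Proposition \ref{2.2}, the bound $\mathsf{D}(C_6)=6$, and the fact that $[2,5]\notin\mathcal{L}(C_6)$ from Lemma \ref{3.1} are precisely what pin down the minimum $m$ of $L$ and discard the borderline patterns left undetermined in \cite{Ge-Sc19d}, so that exactly the three listed base sets survive in each of the three period classes, with cases (a), (b), (c) corresponding to the possible shapes of the defect at the right end of the seed.

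For the realization half I would follow the template of Lemma \ref{3.4}: for each of the nine base sets I exhibit a zero-sum sequence $B=T\,g^{v}(-g)^{w}$ in which $T$ is a short defect sequence over $\{2g,3g,4g\}$ assembled from the atoms $(2g)g^{4}$, $(2g)(-g)^{2}$, $(3g)^{2}$, $(3g)g(2g)$, $(4g)g^{2}$ and their negatives, pick the residues of $v$ and $w$ modulo $6$ so that $\mathsf{L}(B)$ is the required base AMP, and then obtain the general member $y+2k+(\text{base})+4\cdot[0,k]$ by passing to $0^{y}\,(g^{6}(-g)^{6})^{k}\,B$ and checking that the set of lengths of this product is the sumset of the three sets of lengths. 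I expect the structural half to be the real obstacle: turning the near-complete description in \cite{Ge-Sc19d} into an exhaustive finite list of seeds, and verifying that no AMP with a size-$4$ period lying outside the three families can occur, is exactly the delicate case-by-case bookkeeping on small configurations of $2g$, $3g$, $4g$ that the full proof has to carry out; the realization half is then a routine check once the correct defect sequences $T$ are in hand.
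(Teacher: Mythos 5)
Your overall route---lean on the classification of sets of lengths over $C_6$ from \cite{Ge-Sc19d} for the structural half and exhibit explicit sequences for the realization half---is the same as the paper's, but the reduction you build it on has a genuine gap. You claim that a difference-$4$ set of lengths forces $B$ to split as a ``seed'' $T$ (supported essentially on $2g,3g,4g$ plus a bounded remainder) times $g^{6k}(-g)^{6k}$, with $\mathsf L(B)=\mathsf L(T)+2k+4\cdot[0,k]$. That sumset identity is false in general: atoms such as $(2g)g^4$ and $(3g)g^3$ mix elements of the seed with the $g$'s and $(-g)$'s of the ``periodic block'', so factorizations of $B$ do not split into a factorization of $T$ times a factorization of $g^{6k}(-g)^{6k}$, and one only gets the inclusion $\mathsf L(T)+\mathsf L(g^{6k}(-g)^{6k})\subset \mathsf L(B)$. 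The paper's own Lemma \ref{3.4} already illustrates the failure: $\mathsf L\big((2g)g^{v+4}(-g)^{w+2}\big)$ is computed as $1+\big(\mathsf L(g^{v}(-g)^{w+2})\cup \mathsf L(g^{v+4}(-g)^{w})\big)$, a union of two shifted progressions rather than a clean sumset with a seed. So the ``finite list of seeds'' reduction, which you yourself identify as the real obstacle, is asserted but not established, and as stated it would not go through.

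What the paper actually does is sharper and avoids this issue: it observes, from the proof of Proposition 3.8 and Lemmas 3.3--3.7 of \cite{Ge-Sc19d}, that after removing zeros and applying $g\mapsto -g$ the \emph{only} zero-sum sequences over $C_6$ whose set of lengths can be an AMP with a four-element period are those of the two explicit shapes $(2g)^2g^{v}(-g)^{w}$ (Lemma 3.6 there) and $(2g)(4g)g^{v}(-g)^{w}$ (Lemma 3.7 there); every other support yields an interval or an AMP with a shorter period. For these two shapes \cite{Ge-Sc19d} gives closed-form expressions for the minimum, the maximum and the period of $\mathsf L(B)$ in terms of $v,w$ and their residues modulo $6$, and the proof of Proposition \ref{AMP4_C6} consists of solving for which $(v,w)$ the resulting set is a genuine AMP with the full four-element period, which produces exactly the nine parametrized families (and simultaneously proves the ``moreover'' realization statement, since each family is obtained as $\mathsf L(B)$ for a concrete $B$). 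To repair your argument you would need to replace the seed/sumset decomposition by this support-by-support case analysis, or else prove the splitting of factorizations for your specific seeds---which is precisely the delicate bookkeeping you deferred.
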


\begin{proof}
Clearly, it suffices to prove the claim for $y=0$.
The results from \cite{Ge-Sc19d}, specifically the proof of  Proposition 3.8 as well as the statements of Lemmas 3.3. to 3.7 show that
the only cases to consider are those that are treated in Lemmas 3.6 and 3.7 in that paper.  More specifically, AMPs with these periods arise in Case 2 of the proof of Lemma 3.6 and at the end of the proof of Lemma 3.7.

We start by considering the details of the sets that occur in Lemma 3.6 from \cite{Ge-Sc19d}.
As established there these sets arise as the sets of lengths of zero-sum sequences of the form $(2g)^2 g^{v'}(-g)^{w'}$ with $v,w\in \mathbb{N}_0$ where $g$ is a generating element of the group.

Specifically, the first sets of cardinality at least $4$ that arise in Case 2, the sets in Case 1 are not of the relevant form, are the set of lengths of $(2g)^2 g^{6}(-g)^{-m+4}$ with $m\in \mathbb{N}$, which are $\{3+m, 4+m, 5+m, 7+m\}$. These are thus exactly the sets of the form given in 1.a with $k=0$ and $y \in \mathbb{N}_0$ (note that $m \ge 1$).

Then, the sets of lengths of $A = (2g)^2 g^{v+8}(-g)^{w+4}$ with $v,w\in \mathbb{N}_0$ are considered.
More precisely one has (we refer to the argument there for missing details):

In case $v \equiv 0 \mod 6$, the set is an AMP with period $\{0,2,3,4\}$;
its minimum is $2 + (v+w + 4)/6$  and its maximum is the maximum of
$2 + (v+w + 4)/6  + 4 \min\{v+6, w+4\}/6$ and $2 + (v+w + 4)/6  + 4 \min\{v+6, w-2\}/6$
With $v=0 + 6k$ and $w = 2 + 2k$ for $k \in \mathbb{N}_0$, this yields exactly the sets in 3.a  with $y=0$ while with $v=0 + 6k$ and $w = 8 + 2k$ this yields exactly the sets in 3.b with $y=0$.

In case $v \equiv 2 \mod 6$, the set is an AMP with period $\{0,1,3,4\}$;
its minimum is $2 + (v+w + 6)/6$  and its maximum is the maximum of
$3 + (v+w + 6)/6  + 4 \min\{v+4, w+2\}/6$ and $5 + (v+w + 6)/6  + 4 \min\{v+4, w-4\}/6$.
With $v=2 + 6k$ and $w = 4 + 2k$ for $k \in \mathbb{N}_0$, this yields exactly the sets in 2.b with $y=0$ while with $v=2 + 6k$ and $w = 10 + 2k$ this yields exaxtly the sets in 2.c with $y=0$.

In case $v \equiv 4 \mod 6$, the set is an AMP with period $\{0,1,2,4\}$;
its minimum is $2 + (v+w + 8)/6$  and its maximum is the maximum of
$2 + (v+w + 8)/6  + 4 \min\{v+8, w\}/6$ and $4 + (v+w + 8)/6  + 4 \min\{v+2, w\}/6$.
For $k\ge 1$ with $v=4 + 6(k-1)$ and $w = 6+ 6k$, this yields exactly the sets in 1.a with $k \ge 1$ and $y=0$ while with $v=4 + 6k$ and $w = 6+ 6k$ for $k \in \mathbb{N}_0$ this yields exactly the sets in 1.c with $y=0$.

It remains to consider the sets arising in Lemma 3.7 from \cite{Ge-Sc19d}; it turns out these yield the sets in 1.b, 2.a and 3.b.
We see that the AMPs with these periods arise towards the end in Case 2.
Specifically, they arise as the sets of length of $A=(2g)(4g) g^{v}(-g)^{w}$ with $v,w\in \mathbb{N}_0$ where $g$ is a generating element of the group.
More concretely, $v$ and $w$ are congruent modulo $6$ and we set $v = r + 6m$ and $w = r + 6n$ with $r \in [0,5]$ and $m,n\in \mathbb{N}_0$.

If $r \in [4,5]$,  the set is an AMP with period $\{0,2,3,4\}$;
its minimum is $r-2 + m+n$  and its maximum is $r+1+m+n + 4 \min\{m,n\}$.
With $m=n=k+1$  for $k \in \mathbb{N}_0$, this yields exactly the sets in 3.c  with $y=0$ and $y=1$ for $r=4$ and $r=5$, respectively.

If $r \in [2,3]$,  the set is an AMP with period $\{0,1,2,4\}$;
its minimum is $r + m+n$  and its maximum is $r+1+m+n + 4 \min\{m,n\}$.
With $m=n=k+1$  for $k \in \mathbb{N}_0$, this yields exactly the sets in 1.b  with $y=0$ and $y=1$ for $r=2$ and $r=3$, respectively.

If $r \in [0,1]$,  the set is an AMP with period $\{0,1,3,4\}$;
its minimum is $r + 1 + m+n$  and its maximum is $r+1+m+n + 4 \min\{m,n\}$.
With $m=n=k+1$  for $k \in \mathbb{N}_0$, this yields exactly the sets in 2.a  with $y=0$ and $y=1$ for $r=0$ and $r=1$, respectively.
\end{proof}

In a series of lemmas  we  show that all  sets, listed in Proposition \ref{AMP4_C6}, lie in $\mathcal{L}(C_2^5)$.

\begin{lemma} \label{realC25_1}
Let $y,k\in \mathbb{N}_0$.
\begin{enumerate}
\item $y + 2k + \{3,4,6,7\} + 4 \cdot [0,k] \in \mathcal{L}(C_2^5)$.

\item $y + 2k + \{4,5,6,8,9\} + 4 \cdot [0,k] \in \mathcal{L}(C_2^5)$.
\end{enumerate}
\end{lemma}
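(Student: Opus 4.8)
The plan is to combine three devices already in use in this section: translation invariance of $\mathcal L(C_2^5)$ (if $L\in\mathcal L(C_2^5)$ then $y+L\in\mathcal L(C_2^5)$ for every $y\in\N_0$, as exploited in the proof of Lemma~\ref{3.2} and of Propositions~\ref{3.5} and~\ref{3.6}), multiplication by $U$ (recall $\mathsf L(U^2)=\{2,6\}$, whence $\mathsf L(U^{2k})=2k+4\cdot[0,k]$ and, in general, $\mathsf L(U^{2k}B)\supseteq 2k+4\cdot[0,k]+\mathsf L(B)$), and the ``atom--bookkeeping'' technique used for $[4,11]$ in the proof of Lemma~\ref{3.2} and for the sequences $A_1,\dots,A_4$ of Lemma~\ref{3.4}. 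Parts 1 and 2 of the lemma are precisely items 2(a) and 1(b) of Proposition~\ref{AMP4_C6}; by translation invariance it suffices to treat $y=0$, and then it suffices to realize the base cases $k=0$ and multiply by $U^{2k}$.

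For the base cases I would exhibit $C_1,C_2\in\mathcal B(C_2^5)$ with $\mathsf L(C_1)=\{3,4,6,7\}$ and $\mathsf L(C_2)=\{4,5,6,8,9\}$. Since these sets are neither of the form $\{2,6\}+S$ nor of the form $\{3,7\}+S$ for any $S\subset\N_0$, they cannot be obtained by multiplying a previously realized sequence by $U^2$ or $U^3$; finding the right $C_1,C_2$ is the first genuine difficulty. The expected shape is a small power of $U$ times one or two of the atoms $U_I$, $V_I$ (and possibly a square $e_j^2$), chosen so that only one or two of the ``mixed'' elements $e_I$ with $\emptyset\neq I\subsetneq[1,5]$ occur. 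One then computes $\mathsf L(C_i)$ as for $[4,11]$: for each such $e_I$ list the (at most four) atoms that can contain it, note that after any admissible choice the remainder is forced to be a product of copies of $U$ and of squares $e_j^2$, whose set of lengths is read off at once, and finally use the bounds $|C_i|/6\le\min\mathsf L(C_i)$ (as $\mathsf D(C_2^5)=6$) and $\max\mathsf L(C_i)\le|C_i|/2$ (as $0\notin\supp(C_i)$) to confirm that no length is missing.

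For general $k$ (still $y=0$) I would take $U^{2k}C_1$ and $U^{2k}C_2$. The inclusion $\supseteq$ of $2k+4\cdot[0,k]+\mathsf L(C_i)$ is automatic; for $\subseteq$ the key point is that the mixed elements $e_I$ occurring in $C_i$ do not occur in $U$, so in any factorization of $U^{2k}C_i$ every atom meeting such an $e_I$ is one of those already analyzed for $C_i$, whereas the only atoms dividing the remaining part are $U$ and the squares $e_j^2$. Hence the case analysis is the one for $k=0$ superposed on the contribution $2k+4\cdot[0,k]$ of the $U$-part, and it yields exactly $2k+4\cdot[0,k]+\mathsf L(C_i)$. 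Together with translation invariance this gives both families. I expect the two obstacles to be: guessing $C_1$ and $C_2$, and making the superposition argument watertight, i.e., excluding an unexpected atom that simultaneously uses a mixed element $e_I$ and some of the surplus copies of $e_0,\dots,e_5$ --- which again reduces to the description of the atoms over $\{e_I,e_0,\dots,e_5\}$ and a length count.
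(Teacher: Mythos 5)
Your strategy is exactly the paper's: reduce to $y=0$, realize the base set by an explicit zero-sum sequence, and absorb the parameter $k$ by multiplying with powers of $U$. But the step you yourself flag as ``the first genuine difficulty'' --- actually exhibiting $C_1$ and $C_2$ and computing their sets of lengths --- is the entire substance of the lemma, and your proposal never carries it out. As written it is a plan for a proof, not a proof: nothing in it certifies that sets of lengths $\{3,4,6,7\}$ and $\{4,5,6,8,9\}$ are attained at all. For the record, your guess about the shape of the witnesses is correct. The paper takes $A=e_{[1,2]}^2\,U^{2k+2}$ for part~1 and $A=U_{[1,2]}^2\,U^{2k+2}$ for part~2, so the base sequences are $C_1=e_{[1,2]}^2U^2$ (a power of $U$ times the square of a single mixed element) and $C_2=U_{[1,2]}^2U^2$ (a power of $U$ times two copies of a single atom $U_I$). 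The verification is the bookkeeping you describe: the only atoms over $\{e_{[1,2]},e_0,\dots,e_5\}$ containing $e_{[1,2]}$ are $e_{[1,2]}^2$, $U_{[1,2]}$ and $V_{[1,2]}$, and a four-case analysis of how the two occurrences of $e_{[1,2]}$ are covered (by $e_{[1,2]}^2$, by $U_{[1,2]}^2$, by $V_{[1,2]}^2$, or by $U_{[1,2]}V_{[1,2]}$) yields $\mathsf L(e_{[1,2]}^2U^2)=\{3,4,6,7\}$ and $\mathsf L(U_{[1,2]}^2U^2)=\{4,5,6,8,9\}$.

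The remaining ingredients of your plan are sound and coincide with the paper's. The ``superposition'' step is stated in the paper as the assertion that every factorization of $A$ splits as a factorization of $C_i$ times a factorization of $U^{2k}$, and the justification is the one you indicate: after choosing the atoms that cover the mixed element, the leftover sequence is supported on $\{e_0,\dots,e_5\}$, where the only atoms are $U$ and the squares $e_j^2$, so its set of lengths is read off by counting the copies of $U$ used (subject to a parity constraint), and the totals assemble to $2k+4\cdot[0,k]+\mathsf L(C_i)$. So the only genuine deficiency is the missing exhibition and verification of the base sequences; once $C_1$ and $C_2$ above are inserted and their sets of lengths checked, your argument becomes the paper's proof.
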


\begin{proof}
It suffices to show the claim for $y=0$. Let $k \in \N_0$.

1. We set  $A = e_{[1,2]}^2 U^{2k+2}$ and assert that $\mathsf{L}(A)= 2k + \{3,4,6,7\} + 4 \cdot [0,k]$.
Every factorization $z$ of $A$ can be written as  $z_1z_2$, where $z_1$ is a factorization of $A_1=e_{[1,2]}^2 U^2$ and $z_2$ is a factorization of $U^{2k}$. Since $\mathsf{L}(U^{2k})= 2k + 4 \cdot [0,k]$, it suffices to show that $\mathsf{L}(e_{[1,2]}^2 U^{2}) = \{3,4,6,7\}$.

Let $z$ be a factorization of $e_{[1,2]}^2 U^2$. We do a case analysis depending on the atom dividing $z$ and containing the element $e_{[1,2]}$.
If $z$  is divisible by $e_{[1,2]}^2$, then  $|z|=3$ or $|z|=7$.
If $z$ is divisible by $U_{[1,2]}V_{[1,2]}$, then  $z=U_{[1,2]}V_{[1,2]}U$, whence $|z|=3$.  If $z$ is divisible by $U_{[1,2]}^2$, then $z=U_{[1,2]}^2 e_0^2 e_3^2e_4^2 e_5^2$, whence $|z|=6$.  If $z$ is divisible by $V_{[1,2]}^2$, then $z=V_{[1,2]}^2 e_1^2 e_2^2$, whence $|z|=4$.

2. We set  $A = U_{[1,2]}^2 U^{2k+2}$ and assert that  $\mathsf{L}(A)= 2k + \{4,5,6,8,9\} + 4 \cdot [0,k]$.
Every factorization $z$ of $A$ can be written as  $z_1z_2$, where $z_1$ is a factorization of $A_1=U_{[1,2]}^2 U^2$ and $z_2$ is a factorization of $U^{2k}$. Thus, it suffices to show that $\mathsf{L}(U_{[1,2]}^2 U^{2}) = \{4,5,6,8,9\} $.

Let $z$ be a factorization of $ U_{[1,2]}^2 U^{2}$. We do a case analysis depending on the atom dividing $z$ and containing the element $e_{[1,2]}$.
If $z$ is divisible by $U_{[1,2]}^2$, then $|z|=4$ or $|z|=8$.
If $z$ is divisible by $e_{[1,2]}^2$, then $z=e_{[1,2]}^2e_1^2e_2^2y$, where $y$ is a factorization of $U^2$, whence  $|z|=5$ or $|z|=9$.
If $z$ is divisible by $U_{[1,2]}V_{[1,2]}$, then $z=U_{[1,2]}V_{[1,2]}e_1^2e_2^2U$, whence $|z|=5$.  If $z$ is divisible by $V_{[1,2]}^2$, then $z=V_{[1,2]}^2 (e_1^2)^2 (e_2^2)^2$, whence $|z|=6$.
\end{proof}

\begin{lemma} \label{realC25_2}
Let $y,k\in \mathbb{N}_0$.
\begin{enumerate}
\item $y + 2k + \{3,5,6,7\} + 4 \cdot [0,k] \in \mathcal{L}(C_2^5)$.
\item $y + 2k + \{4,5,6,8\} + 4 \cdot [0,k] \in \mathcal{L}(C_2^5)$.
\item $y + 2k + \{5,6,7,9,10,11\} + 4 \cdot [0,k] \in \mathcal{L}(C_2^5)$.
\item $y + 2k + \{4,5,7,8,9\} + 4 \cdot [0,k] \in \mathcal{L}(C_2^5)$.
\item $y + 2k + \{5,6,8,9,10,12\} + 4 \cdot [0,k] \in \mathcal{L}(C_2^5)$.
\item $y + 2k + \{4,6,7,8,10\} + 4 \cdot [0,k] \in \mathcal{L}(C_2^5)$.
\end{enumerate}
\end{lemma}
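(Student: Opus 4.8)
The plan is to follow the template of the proof of Lemma~\ref{realC25_1}. First I would reduce to the case $y=0$: since the zero element $0$ of $C_2^5$ is an atom of length one, $\mathsf{L}(0^{y}B) = y + \mathsf{L}(B)$ for every $B \in \mathcal{B}(C_2^5)$, so it suffices to exhibit, for each of the six families in the statement (with $y=0$), a zero-sum sequence $A$ over $C_2^5$ whose set of lengths is the prescribed family $2k + T + 4\cdot[0,k]$.

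For each of the six ``cores'' $T$ I would search for $A$ of the form $A = W\,U^{2k}$, where $W$ is a fixed short zero-sum sequence assembled from $U$ and from the minimal zero-sum sequences $U_I, V_I$ and the squares $e_j^2, e_I^2$ attached to one or two ``marker'' elements $e_I$ with $\emptyset\ne I\subsetneq[1,5]$ --- exactly in the spirit of the seeds $e_{[1,2]}^2U^2$ and $U_{[1,2]}^2U^2$ of Lemma~\ref{realC25_1}, and allowing in addition the sequences $U^2U_{[1,2]}$, $U^2U_{[1,4]}$, $U^3U_{[1,2]}$, $U^3V_{[1,2]}$ from Lemma~\ref{3.4} and the short realizations produced in Lemma~\ref{3.2}, possibly multiplied by a further power of $U$. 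Since $\mathsf{L}(U^{2k}) = 2k + 4\cdot[0,k]$, and since any factorization of $A$ splits into the atoms meeting a marker element together with a factorization of the remainder --- which, once the marker atoms are deleted, is supported on $\{e_0,\dots,e_5\}$ and hence a product of the squares $e_j^2$ and of copies of $U$, whose factorization lengths are completely transparent --- the computation of $\mathsf{L}(A)$ unwinds uniformly in $k$ and gives $\mathsf{L}(A)=\mathsf{L}(W)+2k+4\cdot[0,k]$. It therefore suffices to choose, for each of the six sets, a seed $W$ with $\mathsf{L}(W) = T$.

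Computing $\mathsf{L}(W)$ is then routine: one runs a case distinction according to which atom(s) of a given factorization contain the marker element $e_I$ --- for $e_{[1,2]}$ these are $e_{[1,2]}^2$, $U_{[1,2]}$ and $V_{[1,2]}$, and analogously, by the symmetry of the chosen basis, for any other $I$ --- and in each case the remaining factor is a product of squares $e_j^2$ and of copies of $U$ in which the admissible number of $U$'s runs over a full arithmetic progression with difference $2$; the associated factorization lengths then form an arithmetic progression with difference $4$, read off directly from the multiplicities. The union over the cases is $\mathsf{L}(W)$, which one checks equals the prescribed core.

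The real work, and the main obstacle I anticipate, is the search itself: one has to produce, for each of the six targets, a concrete seed $W$ whose set of lengths is exactly the prescribed core --- this is delicate for the six-element cores $\{5,6,7,9,10,11\}$ and $\{5,6,8,9,10,12\}$, which force longer seeds and hence more cases, and where one must also rule out stray factorizations that would enlarge $\mathsf{L}(W)$. A secondary subtlety is the rigidity underlying $\mathsf{L}(A) = \mathsf{L}(W) + 2k + 4\cdot[0,k]$: one needs that no factorization mixes a marker atom with the $U^{2k}$-part so as to create lengths outside the claimed family, and this is arranged by ensuring that enough copies of each $e_i$ survive the deletion of the marker atoms --- for instance by absorbing a factor $U^2$ into $W$, as in Lemma~\ref{realC25_1} --- but it has to be verified in each case.
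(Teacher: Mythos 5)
Your outline coincides with the strategy the paper actually uses: reduce to $y=0$, realize each set as $\mathsf L(A)$ for $A=A_1U^{2k}$ with a fixed seed $A_1$, use $\mathsf L(U^{2k})=2k+4\cdot[0,k]$, and compute $\mathsf L(A_1)$ by a case distinction on the atoms containing the marker elements. But the proposal stops exactly where the proof begins: you never exhibit the six seeds, and you yourself identify their construction and verification as ``the real work.'' Since the entire content of the lemma is the existence of such zero-sum sequences, what you have written is a search plan, not a proof. For comparison, the paper works inside $G_0=\{e_0,e_1,\dots,e_5\}\cup\{e_{[1,2]},e_{[3,4]}\}$, observes that the only atoms of $\mathcal B(G_0)$ of length at least three are $U$, $U_{[1,2]}$, $U_{[3,4]}$, $V_{[1,2]}$, $V_{[3,4]}$ and the cross-term atom $W=e_0e_{[1,2]}e_{[3,4]}e_5$, and takes as seeds $WU^{2}$, $U_{[1,2]}U_{[3,4]}U^{2}$, $U_{[1,2]}U_{[3,4]}U^{3}$, $U_{[1,2]}V_{[3,4]}U^{2}$, $U_{[1,2]}V_{[3,4]}U^{3}$ and $WU^{3}$ for the six cores, in that order.

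Two concrete points show that the gap is not merely clerical. First, the paper's realization of the cores $\{3,5,6,7\}$ and $\{4,6,7,8,10\}$ hinges on the atom $W$ joining the two disjoint markers (the minimal lengths $3$ and $4$ arise only from factorizations containing $W$); this atom is not among the building blocks $U$, $U_I$, $V_I$, $e_j^2$ that you list, and none of the seeds recycled from Lemmas \ref{3.2} and \ref{3.4}, even after multiplying by powers of $U$, produce these cores. Second, for the two six-element cores the seed involves $U^{3}$ and the case analysis is no longer just ``which atom contains the marker'': one must separately treat factorizations not divisible by $U$, which the paper does by tracking the multiplicity of $e_0$ (every atom of $\mathcal B(G_0)$ containing $e_0$ other than $e_0^2$ and $U$ contains a marker) and then splitting according to whether $(e_0^2)^2$ or $V_{[1,2]}V_{[3,4]}$ divides the factorization. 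Until the six sequences are written down and these verifications are carried out --- including the rigidity statement $\mathsf L(A_1U^{2k})=\mathsf L(A_1)+2k+4\cdot[0,k]$ that you correctly flag but do not prove --- the lemma is not established.
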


\begin{proof}
It suffices to show the claim for $y=0$. Let $k \in \N_0$.
We set  $G_0 = \{e_0,e_1, e_2, e_3,e_4,e_5\} \cup \{e_{[1,2]}, e_{[3,4]}\}$ and observe that the atoms of $\mathcal B (G_0)$ of
 length at least three are $U$, $U_{[1,2]}$, $U_{[3,4]}$, $V_{[1,2]}$, $V_{[3,4]}$, and $W  = e_0e_{[1,2]}e_{[3,4]}e_5$.

1. We set  $A = W U^{2k+2}$ and assert that  $\mathsf{L}(A)= 2k + \{3,5,6,7\} + 4 \cdot [0,k]$.
Every factorization $z$ of $A$ can be written as  $z_1z_2$, where $z_1$ is a factorization of $A_1=W U^2$ and $z_2$ is a factorization of $U^{2k}$. Since $\mathsf{L}(U^{2k})= 2k + 4 \cdot [0,k]$, it suffices to show that $\mathsf{L}(W U^{2}) = \{3,5,6,7\}$.

Let $z$ be a factorization of $W U^{2}$.
We do a case analysis depending on the atoms dividing $z$ and  containing the elements $e_{[1,2]}$ and $e_{[3,4]}$.
If $z$ is divisible by $W$, then  $|z|=3$ or $|z|=7$.
If $z$  is divisible by $U_{[1,2]}U_{[3,4]}$, then $z=U_{[1,2]}U_{[3,4]}e_0^2 e_5^2U$, whence $|z|=5$.  If $z$ is divisible by $U_{[1,2]}V_{[3,4]}$, then $z=U_{[1,2]}V_{[3,4]} e_0^2 e_1^2e_2^2 e_5^2$, whence $|z|=6$. Similarly, if $z$ is divisible by $V_{[1,2]}U_{[3,4]}$, then $|z|=6$.

2. We set  $A = U_{[1,2]} U_{[3,4]}U^{2k+2}$ and assert that  $\mathsf{L}(A)= 2k + \{4,5,6,8\} + 4 \cdot [0,k]$.
Every factorization $z$ of $A$ can be written as  $z_1z_2$, where $z_1$ is a factorization of $A_1= U_{[1,2]} U_{[3,4]}U^{2}$ and $z_2$ is a factorization of $U^{2k}$. Thus, it suffices to show that $\mathsf{L}( U_{[1,2]} U_{[3,4]} U^{2}) = \{4,5,6,8\}$.

Let $z$ be a factorization of $U_{[1,2]} U_{[3,4]} U^{2}$.
We do a case analysis depending on the atoms dividing $z$ and containing the elements $e_{[1,2]}$ and $e_{[3,4]}$.
If $z$  is divisible by $ U_{[1,2]} U_{[3,4]}$, then $|z|=4$ or $|z|=8$.
If $z$ is divisible by $U_{[1,2]}V_{[3,4]}$, then $z=U_{[1,2]}V_{[3,4]} e_3^2 e_4^2U$, whence $|z|=5$. Similarly, if $z$ is divisible by $V_{[1,2]}U_{[3,4]}$, then $|z|=5$. If $z$ is divisible by  $V_{[1,2]} V_{[3,4]}$, then $z=V_{[1,2]} V_{[3,4]} e_1^2e_2^2e_3^2e_4^2$, whence $|z|=6$. If $z$ is divisible by $W$, then  $z=WU e_1^2e_2^2e_3^2e_4^2$, whence $|z|=6$.

3. We set  $A = U_{[1,2]} U_{[3,4]}U^{2k+3}$ and assert that $\mathsf{L}(A)= 2k + \{5,6,7,9, 10,11\} + 4 \cdot [0,k]$.
Every factorization $z$ of $A$ can be written as  $z_1z_2$, where $z_1$ is a factorization of $A_1= U_{[1,2]} U_{[3,4]}U^{3}$ and $z_2$ is a factorization of $U^{2k}$. Thus, it suffices to show that $\mathsf{L}( U_{[1,2]} U_{[3,4]} U^{3}) = \{5,6,7,9, 10,11\}$.

Let $z$ be a factorization of $U_{[1,2]} U_{[3,4]} U^{3}$. If $z$ is divisible by $U$, then 2. implies that  $|z| \in 1 +\{4,5,6,8\}  = \{5,6,7,9\}$.
Suppose that $z$ is  not divisible by $U$.
Then $z$ is divisible by $e_0^2$, because all atoms containing $e_0$, other than $e_0^2$ and $U$, contain $e_{[1,2]}$ or $e_{[3,4]}$.
Moreover, $z$  is divisible by exactly one of the atoms $W$, $V_{[1,2]}$, $V_{[3,4]}$.
If  $z$ is divisible by  $W$, then all other atoms dividing $z$ have length $2$, whence   $|z|=11$.
If  $z$ is divisible by  $V_{[1,2]}$, then $z$ is divisible by  $ U_{[3,4]}$,  and all other atoms dividing $z$ have length $2$, whence $|z|=10$.
If  $z$ is divisible by  $V_{[3,4]}$, then the same argument shows that $|z|=10$.

4.  We set  $A = U_{[1,2]} V_{[3,4]}U^{2k+2}$ and assert that $\mathsf{L}(A)= 2k + \{4,5,7,8,9\} + 4 \cdot [0,k]$.
Every factorization $z$ of $A$ can be written as  $z_1z_2$, where $z_1$ is a factorization of $A_1= U_{[1,2]} V_{[3,4]}U^{2}$ and $z_2$ is a factorization of $U^{2k}$. Thus, it suffices to show that $\mathsf{L}( U_{[1,2]} V_{[3,4]} U^{2}) = \{4,5,7,8,9\}$.

Let $z$ be a factorization of $U_{[1,2]} V_{[3,4]} U^{2}$.
We do a case analysis depending on the atoms dividing $z$ and containing the elements $e_{[1,2]}$ and $e_{[3,4]}$.
If $z$ is divisible by $ U_{[1,2]} V_{[3,4]}$, then  $|z|=4$ or $|z|=8$.
If $z$ is divisible by  $W$, then   $z=W (e_1^2)(e_2^2)z'$, where $z'$ is a factorization of $U^2$, whence $|z|=5$ or $|z|=9$.
If $z$ is divisible by $U_{[1,2]}U_{[3,4]}$, then $z= U_{[1,2]}U_{[3,4]} e_0^2 e_1^2 e_2^2 e_5^2 U$, whence $|z|=7$.
If $z$ is divisible by $ V_{[1,2]} V_{[3,4]}$, then $z=V_{[1,2]} V_{[3,4]} e_1^2 e_2^2 U $, whence $|z|=5$.
If $z$ is divisible by $ V_{[1,2]} U_{[3,4]}$, then $z=V_{[1,2]} U_{[3,4]} (e_1^2)^2(e_2^2)^2 e_0^2 e_5^2$, whence $|z|=8$.

5. We set  $A = U_{[1,2]} V_{[3,4]}U^{2k+3}$ and assert that  $\mathsf{L}(A)= 2k + \{5,6,8,9, 10, 12\} + 4 \cdot [0,k]$.
Every factorization $z$ of $A$ can be written as  $z_1z_2$, where $z_1$ is a factorization of $A_1= U_{[1,2]} V_{[3,4]}U^{3}$ and $z_2$ is a factorization of $U^{2k}$. Thus, it suffices to show that $\mathsf{L}( U_{[1,2]} V_{[3,4]} U^{3}) = \{5,6,8,9, 10, 12\}$.

Let $z$ be a factorization of $U_{[1,2]} V_{[3,4]} U^{3}$.
If $z$ is divisible by  $U$, then 4. implies that  $|z| \in 1 +\{4,5,7,8,9\}  = \{5,6,8,9,10\}$.
Suppose that $z$ is not divisible by  $U$.
Then $z$ is divisible by  $e_0^2$, because all atoms containing $e_0$, other than $e_0^2$ and $U$, contain $e_{[1,2]}$ or $e_{[3,4]}$. Since $\mathsf v_{e_0} (A_1) = 4$, it follows that $z$ is either  divisible by $V_{[1,2]}$ and $V_{[3,4]}$  or divisible by $(e_0^2)^2$.
In the former case, all atoms dividing $z$, other than $V_{[1,2]}$ and $V_{[3,4]}$, have length $2$, whence $|z|= 2 + (26 - 10)/2=10$. In the latter case, $z$ is divisible by $U_{[1,2]}$ and $U_{[3,4]}$, and all atoms dividing $z$, other than $U_{[1,2]}$ and $U_{[3,4]}$, have length $2$, whence $|z|= 2 + (26 - 6)/2=12$.

6.  We set  $A = W U^{2k+3}$ and assert that $\mathsf{L}(A)= 2k + \{4,6,7,8,10\} + 4 \cdot [0,k]$.
Every factorization $z$ of $A$ can be written as  $z_1z_2$, where $z_1$ is a factorization of $A_1=W U^3$ and $z_2$ is a factorization of $U^{2k}$. Thus,  it suffices to show that $\mathsf{L}(W U^{3}) =\{4,6,7,8,10\}$.

Let $z$ be a factorization of $WU^3$.
If $z$ is divisible by  $U$, then 1. implies that $|z| \in 1 +\{3,5,6,7\}  = \{4,6,7,8\}$.
Suppose that $z$ is not divisible by $U$. Then $z$ is divisible by  $e_0^2$, because all atoms containing $e_0$, other than $e_0^2$ and $U$, contain $e_{[1,2]}$ or $e_{[3,4]}$.
Since  $\mathsf v_{e_0} (A_1) =4$, it follows that $z$ is either divisible by  $V_{[1,2]}$ and $V_{[3,4]}$  or divisible by $(e_0^2)^2$.
In the former case, we obtain that  $z = V_{[1,2]}V_{[3,4]}e_0^2 \cdot \ldots \cdot e_5^2$, whence $|z|=8$.
In the latter case, $z$ is divisible by  $U_{[1,2]}$ and $U_{[3,4]}$ and all other atoms dividing $z$ have length $2$, whence  $|z|=2 + (22-6)/2 = 10$.
\end{proof}

\begin{lemma} \label{realC25_3}
For all $y,k\in \mathbb{N}_0$, we have $y + 2k + \{4,6,7,8,10,11\} + 4 \cdot [0,k] \in \mathcal{L}(C_2^5)$.
\end{lemma}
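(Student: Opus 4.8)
The plan is to realize each of these sets as $\mathsf{L}(A)$ for a single explicit zero-sum sequence. As in the preceding lemmas it suffices to treat $y=0$, so the task is to show that $2k + \{4,6,7,8,10,11\} + 4 \cdot [0,k] \in \mathcal{L}(C_2^5)$ for every $k \in \N_0$. I would set
\[
A = U^{2k+3}\, V_1, \qquad \text{where } V_1 = e_1e_2e_3e_4e_{\{3,4,5\}}e_{\{1,2,5\}}
\]
is the sequence from \eqref{V1V2}, and claim that $\mathsf{L}(A) = 2k + \{4,6,7,8,10,11\} + 4 \cdot [0,k]$.

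For the inclusion ``$\supseteq$'' I would factor $A = U^{2k+2} \cdot (UV_1)$. By the proof of Lemma \ref{3.2} we have $\mathsf{L}(UV_1) = \{2,4,5\}$, and $\mathsf{L}(U^{2k+2}) = (2k+2) + 4 \cdot [0,k+1]$ (as used in the proof of Lemma \ref{3.2}); since $\mathsf{L}(A) \supseteq \mathsf{L}(U^{2k+2}) + \mathsf{L}(UV_1)$, a short computation shows this sumset is already all of $2k + \{4,6,7,8,10,11\} + 4 \cdot [0,k]$.

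The real work is the reverse inclusion. The elements $e_{\{1,2,5\}}$ and $e_{\{3,4,5\}}$ occur exactly once in $A$, so in any factorization $z$ of $A$ there is a unique atom $T_1 \mid z$ divisible by $e_{\{1,2,5\}}$ and a unique atom $T_2 \mid z$ divisible by $e_{\{3,4,5\}}$. The support of $A$ is $\{e_0,\dots,e_5,e_{\{1,2,5\}},e_{\{3,4,5\}}\}$, which is exactly the support occurring in the analysis of $\mathsf{L}(UV_1)$ in the proof of Lemma \ref{3.2}; hence $T_1 \in \{U_{\{1,2,5\}},\, V_{\{1,2,5\}},\, V_1,\, V_1'\}$ with $V_1' = e_0e_5e_{\{3,4,5\}}e_{\{1,2,5\}}$, and symmetrically $T_2 \in \{U_{\{3,4,5\}},\, V_{\{3,4,5\}},\, V_1,\, V_1'\}$. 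Since only $V_1$ and $V_1'$ contain both special elements, this leaves six cases for the pair $(T_1,T_2)$: the two ``pure'' cases $T_1 = T_2 = V_1$ and $T_1 = T_2 = V_1'$, and the four ``split'' cases $T_1 \in \{U_{\{1,2,5\}}, V_{\{1,2,5\}}\}$, $T_2 \in \{U_{\{3,4,5\}}, V_{\{3,4,5\}}\}$. In each case, removing $T_1$ (and $T_2$, when $T_1 \ne T_2$) from $A$ leaves a sequence supported on $\{e_0,\dots,e_5\}$; over this set the only atoms are $e_i^2$ for $i \in [0,5]$ and $U$, so for any $e_0^{a_0}\cdots e_5^{a_5} \in \mathcal{B}(\{e_0,\dots,e_5\})$ — which forces all $a_i$ to have one common parity $\delta$, since $\sigma = \sum_{i=1}^{5}(a_i+a_0)e_i$ — one has
\[
\mathsf{L}\big(e_0^{a_0}\cdots e_5^{a_5}\big) = \Big\{\, \tfrac{1}{2}\sum_{i=0}^{5} a_i - 2t \;:\; 0 \le t \le \min_i a_i,\ t \equiv \delta \pmod{2} \,\Big\},
\]
an arithmetic progression with difference $4$. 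Substituting the multiplicities of $A$ in each of the six cases and adding back the one or two removed atoms then yields six arithmetic progressions with difference $4$ (the pure $V_1$-case and the ``same-type'' split cases feeding the values $\equiv 2k$, the ``mixed-type'' split cases feeding those $\equiv 2k+2$, and the pure $V_1'$-case those $\equiv 2k+3$ modulo $4$), and I would finish by checking that their union equals $2k + \{4,6,7,8,10,11\} + 4 \cdot [0,k]$.

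The only genuine difficulty is finding the right $A$; once $A = U^{2k+3}V_1$ is chosen, the atom classification through $e_{\{1,2,5\}}$ and $e_{\{3,4,5\}}$ is handed to us by the proof of Lemma \ref{3.2}, and the remaining case-by-case arithmetic is routine, entirely parallel to the bookkeeping in Lemmas \ref{realC25_1} and \ref{realC25_2}. The one point that requires care is ensuring the six progressions together cover the target set exactly — neither missing a value nor producing a length outside it.
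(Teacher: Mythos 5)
Your proposal is correct and takes essentially the same route as the paper: the sequence $A = U^{2k+3}V_1$ is literally the paper's $A = W U^{2k+3}$ (its $W = e_1e_2e_3e_4e_{\{1,2,5\}}e_{\{3,4,5\}}$ is exactly $V_1$), and the paper likewise computes $\mathsf L(A)$ by a case analysis on the atoms containing $e_{\{1,2,5\}}$ and $e_{\{3,4,5\}}$, whose possible values are the same seven atoms you list. The only cosmetic difference is that the paper first splits off $U^{2k}$ and evaluates $\mathsf L(WU^{3}) = \{4,6,7,8,10,11\}$, whereas you treat all of $A$ at once via the explicit arithmetic-progression formula for sets of lengths over $\{e_0,\dots,e_5\}$; both the case bookkeeping and the resulting union check out.
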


\begin{proof}
It suffices to prove the claim for $y=0$. Let $k \in \N_0$.
We  set $G_0 = \{e_0,e_1, e_2, e_3,e_4,e_5\} \cup \{e_{\{1,2,5\}}, e_{\{3,4,5\}}\}$ and observe that the atoms of $\mathcal B (G_0)$ of
 length at least three are $U$, $U_{\{1,2,5\}}$, $U_{\{3,4,5\}}$, $V_{\{1,2,5\}}$, $V_{\{3,4,5\}}$,
$W = e_1e_2e_3e_4 e_{\{1,2,5\}}e_{\{3,4,5\}}$, and $W' = e_0e_5e_{\{1,2,5\}}e_{\{3,4,5\}}$.

We set  $A = W U^{2k+3}$ and assert that $\mathsf{L}(A)= 2k + \{4,6,7,8,10,11\} + 4 \cdot [0,k]$.
Every factorization $z$ of $A$ can be written as  $z_1z_2$, where $z_1$ is a factorization of $A_1=W U^3$ and $z_2$ is a factorization of $U^{2k}$. Since $\mathsf{L}(U^{2k})= 2k + 4 \cdot [0,k]$, it suffices to show that $\mathsf{L}(A_1) = \{4,6,7,8,10,11\}$.

Let $z$ be a factorization of $WU^3$.  We do a case analysis depending on the atoms dividing $z$ and containing the elements  $e_{\{1,2,5\}}$ and $e_{\{3,4,5\}}$.
If $z$  is divisible by $W$, then   $|z|=4$ or $|z|=8$.
If $z$ is divisible by $W'$, then $z=W'e_1^2e_2^2e_3^2e_4^2 y$, where $y$ is a factorization of $U_2$, whence $|z|=7$ or $|z|=11$.
If $z$  is divisible by $U_{\{1,2,5\}}U_{\{3,4,5\}}$, then $z=U_{\{1,2,5\}}U_{\{3,4,5\}}e_0^2 e_1^2 e_2^2 e_3^2 e_4^2 U$, whence $|z|=8$.  If $z$ is divisible by $V_{\{1,2,5\}}V_{\{3,4,5\}}$, then $z=V_{\{1,2,5\}}V_{\{3,4,5\}} e_1^2 e_2^2 e_3^2 e_4^2 e_5^2U$, whence $|z|=8$.
If $z$ is divisible by $U_{\{1,2,5\}}V_{\{3,4,5\}}$, then $z=U_{\{1,2,5\}}V_{\{3,4,5\}} e_3^2 e_3^2y$,
where $y$ is a factorization of $U_2$, whence  $|z|=6$ or $|z|=10$. Similarly, if $z$ is divisible by $V_{\{1,2,5\}}U_{\{3,4,5\}}$, then $|z|=6$ or $|z|=10$.
Thus $\mathsf{L}(A_1) = \{4,6,7,8,10,11\}$.
\end{proof}

\begin{proposition} \label{3.11}
Every $L \in \mathcal{L}(C_6)$, that is an {\rm AMP} with period $\{0,1,2,4\}$, $\{0,1,3,4\}$, or with $\{0,2,3,4\}$, lies in $\mathcal{L}(C_2^5)$.
\end{proposition}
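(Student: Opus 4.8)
The plan is to deduce the proposition by merging two results already established in this subsection: the classification of the relevant arithmetical multiprogressions in $\mathcal L(C_6)$ provided by Proposition~\ref{AMP4_C6}, and the explicit realizations over $C_2^5$ carried out in Lemmas~\ref{realC25_1}, \ref{realC25_2}, and~\ref{realC25_3}. First I would reduce to the case $|L|\ge 4$: each of the three periods $\{0,1,2,4\}$, $\{0,1,3,4\}$, $\{0,2,3,4\}$ has four elements, so a genuine AMP with one of them as period has length at least one and therefore at least four elements, while any remaining $L\in\mathcal L(C_6)$ of cardinality at most three is an interval and hence lies in $\mathcal L(C_2^5)$ by Proposition~\ref{3.3}. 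For $|L|\ge 4$, Proposition~\ref{AMP4_C6} then shows that, for suitable $y,k\in\N_0$, the set $L$ is one of the nine explicitly listed sets (three for each period).

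Second, I would match each of these nine families with a set already known to lie in $\mathcal L(C_2^5)$. Concretely, families~2(a) and~1(b) of Proposition~\ref{AMP4_C6} are exactly the two sets treated in Lemma~\ref{realC25_1}; families~3(a), 1(a), 1(c), 2(b), 2(c), and~3(b) are exactly the six sets treated, in that order, in Lemma~\ref{realC25_2}; and family~3(c) is exactly the set treated in Lemma~\ref{realC25_3}. Since the parametrization $y+2k+\{\dots\}+4\cdot[0,k]$ appearing in Proposition~\ref{AMP4_C6} agrees verbatim with the parametrization in each corresponding realization lemma, no further manipulation is required, and $L\in\mathcal L(C_2^5)$ follows at once. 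Collecting the three cases then proves the proposition.

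The substantive content has already been carried out elsewhere: the delicate case analysis inherited from \cite{Ge-Sc19d} that underlies Proposition~\ref{AMP4_C6}, and the factorization computations over $C_2^5$ behind the three realization lemmas. Consequently Proposition~\ref{3.11} is essentially an assembly step, and the only point deserving care is the bookkeeping — checking that the nine families produced by Proposition~\ref{AMP4_C6} are in bijection with the nine sets realized over $C_2^5$, and that the low-cardinality AMPs are correctly absorbed into Proposition~\ref{3.3}. Beyond this matching I do not anticipate any structural obstacle.
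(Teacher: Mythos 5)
Your handling of the main case $|L|\ge 4$ coincides with the paper's: Proposition~\ref{AMP4_C6} reduces everything to nine explicit families, and your matching of those families with Lemmas~\ref{realC25_1}, \ref{realC25_2}, and \ref{realC25_3} is correct in every instance (including the order you claim within Lemma~\ref{realC25_2}).

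The gap is in your reduction to $|L|\ge 4$. You dismiss the sets of cardinality at most three by asserting that they are intervals and invoking Proposition~\ref{3.3}, on the grounds that a ``genuine'' AMP with a four-element period has at least four elements. But the statement of the proposition (and the definition of an AMP in Section~\ref{2}) imposes no lower bound on the length: an AMP of length $0$ with period $\{0,2,3,4\}$ has shape $\{m\}$, $\{m,m+2\}$ or $\{m,m+2,m+3\}$, and one with period $\{0,1,3,4\}$ may have shape $\{m,m+1,m+3\}$ -- these are not intervals. Such sets genuinely occur in $\mathcal L(C_6)$: for instance $\{2,4,5\}\in\mathcal L(C_6)$ by Lemma~\ref{3.1}, and $\{2,4,5\}=(2+\{0,2,3,4\}+4\Z)\cap[2,5]$ is an AMP with period $\{0,2,3,4\}$, so Proposition~\ref{3.3} says nothing about it. The paper instead enumerates the possible shapes for $|L|=2$ (namely $\{y+2,y+3\}$ and $\{y+2,y+4\}$) and for $|L|=3$ (namely $\{y+2,y+3,y+4\}$, $\{y+2,y+4,y+5\}$, $\{y+3,y+4,y+6\}$, where Lemma~\ref{3.1} is used to exclude $\{2,3,5\}$ from $\mathcal L(C_6)$ and hence to pin down the minimum in the last shape), and realizes each of them over $C_2^5$ via Lemma~\ref{3.2}, which supplies $[2,3]$, $\{2,4\}$, $\{2,4,5\}$, $\{2,3,5\}$, and so on. The repair is routine, but as written your reduction step is false and must be replaced by this case analysis.
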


\begin{proof}
Let $L \in \mathcal{L}(C_6)$ be an {\rm AMP} with period $\{0,1,2,4\}$, or with $\{0,1,3,4\}$, or with $\{0,2,3,4\}$.
If $L$ is a singleton, then the claim is holds. If $|L|=2$, then $L=\{y+2,y+3\}$ or $L=\{y+2,y+4\}$ with $y \in \mathbb{N}_0$.
Both  sets are in $\mathcal{L}(C_2^5)$ by Lemma \ref{3.2}. If $|L|=3$, then $L=\{y+2,y+3,y+4\}$, or
$L=\{y+2,y+4,y+5\}$, or $L=\{y+3,y+4,y+6\}$ with $y \in \mathbb{N}_0$; recall that $\{2,3,5\} \notin \mathcal{L}(C_6)$ by Lemma \ref{3.1}.
All these sets are in $\mathcal{L}(C_2^5)$ by Lemma \ref{3.2}.
If $|L|\ge 4$, then $L$ has one of the forms given in Proposition \ref{AMP4_C6}. All these sets are in $\mathcal{L}(C_2^5)$ by Lemmas \ref{realC25_1}, \ref{realC25_2}, and \ref{realC25_3}.
\end{proof}

\subsection{On $\mathcal L (G_0) \subset \mathcal L (C_2^5)$ for some subsets $G_0 \subset C_6$}  \label{3.d}

The goal of this subsection is to prove that $\mathcal L (G_0) \subset \mathcal L (C_2^5)$ for several subsets $G_0$ of a cyclic group $G$ of order $|G|=6$.
The first lemma is of interest in its own rights. It shows that -- in contrast to the expected affirmative answer to the Characterization Problem -- groups $G$ may have proper subgroups $G_0$ such that $\mathcal L (G_0)$ and further arithmetical invariants are equal to the invariants  of a different group $G'$.

\begin{lemma} \label{7.1}
Let $G$ be a finite abelian group, $g \in G$ with $\ord (g)=6$ and $G_0 = \{0, g, 2g, 3g, 4g \}$. Then $\mathsf c (G_0)=3$, $\rho (G_0)=3/2$, and
\[
\mathcal L (G_0) = \{ y + 2k + [0,k] \colon y, k \in \N_0 \} = \mathcal L (C_3) = \mathcal L (C_2 \oplus C_2) \subset \mathcal L (C_2^5) \,.
\]
\end{lemma}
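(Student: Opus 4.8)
The claim bundles together three things: a structural description of $\mathcal{L}(G_0)$, the values $\mathsf{c}(G_0)=3$ and $\rho(G_0)=3/2$, and the inclusion into $\mathcal{L}(C_2^5)$. The last inclusion is automatic once we know $\mathcal{L}(G_0)=\mathcal{L}(C_3)$, since $\mathcal{L}(C_3)\subset\mathcal{L}(G)$ for every $G$ with $\mathsf{D}(G)\ge 4$ by Theorem A. So the real work is the identification $\mathcal{L}(G_0)=\{y+2k+[0,k]\colon y,k\in\N_0\}$; the values of $\mathsf{c}$ and $\rho$ will fall out of the same analysis, or can be read off at the end.

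First I would set up the combinatorics of the atoms of $\mathcal{B}(G_0)$ with $G_0=\{0,g,2g,3g,4g\}$, $\ord(g)=6$. Writing a sequence over $G_0$ as $0^{a_0}g^{a_1}(2g)^{a_2}(3g)^{a_3}(4g)^{a_4}$, I would use the $g$-norm $\|\cdot\|_g$: for a zero-sum sequence $S$, $\|S\|_g=(a_1+2a_2+3a_3+4a_4)/6\in\N_0$, and I would enumerate the minimal zero-sum sequences. These are $0$, $g^6$, $(2g)^3$, $(3g)^2$, $g^2(2g)(3g)$ (various norm-$1$ atoms), together with $(4g)$-atoms like $(4g)(2g)$, $(4g)g^2$, $(4g)^3(3g)^2$, $(4g)^3(2g)^3$, etc. — i.e. $\mathsf{D}(G_0)$ is small and the norm-$1$ atoms dominate. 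The key point is that apart from $0$ and $g^6$ (norm $1$, length $6$), every atom has length $\le 3$, and I would check $\mathsf{D}(G_0)\le 6$ (in fact equal, witnessed by $g^6$). Using $\|A\|_g/(n-1)\le\min\mathsf{L}(A)\le\max\mathsf{L}(A)\le\|A\|_g$ with $n=6$ gives $\rho(G_0)\le 5$, which is too weak; the sharper bound comes from noting that $0$ and $g^6$ are the only long atoms, so a factorization's length is governed by how the $g$'s are grouped — six copies of $g$ either form $g^6$ (contributing $1$ to the length) or split into shorter atoms.

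The structural statement $\mathcal{L}(G_0)=\mathcal{L}(C_3)$ I would prove by showing $\supp$-wise that $\mathcal{B}(G_0)$ behaves arithmetically like $\mathcal{B}(C_3)$ or $\mathcal{B}(C_2^2)$: the standard way is to exhibit a transfer homomorphism, or — more hands-on — to prove directly that every $L\in\mathcal{L}(G_0)$ has the form $y+2k+[0,k]$ and conversely that all such sets occur. For the ``$\supset$'' direction, $\{2k+[0,k]\}$ is realized by $(g^6)^?$-type products: e.g.\ $\mathsf{L}\big((g^6)^k\cdot(\text{something of length }k\text{ with full factorization spread})\big)$; concretely the sequences $\big(g^2(2g)(3g)\big)^k\cdot(3g)^{?}\cdots$ should give intervals, and tensoring with $0^y$ shifts by $y$. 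For ``$\subset$'', given $B\in\mathcal{B}(G_0)$, peel off the $0$'s (they only shift $\mathsf{L}$ by $\mathsf{v}_0(B)$), then analyze the $\langle g\rangle$-part; here the only subtlety is the interaction of $g^6$ with the other atoms, and I would argue that $\Delta(\mathsf{L}(B))\subset\{1\}$ and $\rho(\mathsf{L}(B))\le 3/2$, which pins $\mathsf{L}(B)$ to be an interval $[\min,\max]$ with $\max\le\tfrac32\min$, i.e.\ of the stated form. The catenary degree $\mathsf{c}(G_0)=3$ then follows since $\mathsf{c}(G_0)\le\mathsf{D}(G_0)$ is too weak but $2+\max\Delta(G_0)\le\mathsf{c}(G_0)$ gives $\mathsf{c}\ge 3$ (as $\max\Delta(G_0)=1$ would give $3$; one must check $\mathsf{c}\le 3$ by noting any two factorizations differ by swapping $g^6$ with a length-$3$ regrouping or similar, each step of distance $\le 3$).

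\textbf{Main obstacle.} The delicate part is the exhaustive-but-clean enumeration of atoms of $\mathcal{B}(G_0)$ and verifying that no atom other than $0$ and $g^6$ has length $>3$ — in particular controlling atoms involving $4g=-2g$ and $3g=-3g$, since $4g$ together with $2g$ or with $g^2$ gives short atoms, but longer combinations like $(4g)^a(3g)^b(2g)^c g^d$ need to be ruled out as atoms when their length exceeds $3$. Once the atom list is nailed down, the identification with $\mathcal{L}(C_3)$ and the numerical invariants are routine, and the inclusion $\mathcal{L}(G_0)=\mathcal{L}(C_3)\subset\mathcal{L}(C_2^5)$ is immediate from Theorem A since $\mathsf{D}(C_2^5)=6\ge 4$.
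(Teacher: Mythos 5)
Your plan has a genuine gap, and it sits exactly where you locate the ``main obstacle''. The atom enumeration you propose to verify is false: besides $0$ and $g^6$, the monoid $\mathcal B(G_0)$ contains atoms of length $4$ and $5$, for instance $g^4(2g)$ (length $5$; no proper subsum of $\{1,1,1,1,2\}$ is $\equiv 0 \bmod 6$), $g^3(3g)$, $g^2(2g)^2$, and $g(3g)(4g)^2$ (all of length $4$). Conversely, some of the sequences you list as atoms, such as $(4g)^3(3g)^2$ and $(4g)^3(2g)^3$, are not minimal (they contain $(3g)^2$, resp.\ $(2g)(4g)$). Since your route to the two key bounds $\max\Delta(\mathsf L(B))\le 1$ and $\rho(\mathsf L(B))\le 3/2$ rests on ``only $0$ and $g^6$ are long'', that route collapses. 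Moreover, even with the correct atom list these two bounds are the entire content of the lemma and your proposal only asserts them. The $g$-norm estimate you invoke gives $\max\mathsf L(A)\le\|A\|_g$ and, since every atom of $\mathcal B(G_0)$ has $g$-norm at most $2$, $\min\mathsf L(A)\ge\|A\|_g/2$; this yields only $\rho\le 2$, not $3/2$. The improvement to $3/2$ requires a genuinely finer argument: one must use that the only atoms of $g$-norm $2$ are $(4g)^3$ and $g(3g)(4g)^2$ and track, along a chain of factorizations, how many norm-$2$ atoms are traded for pairs of norm-$1$ atoms; this is what forces $m_2/m_1\le 3/2$. Your ``$\supset$'' direction is also left vague, though it is the easy half.

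For comparison, the paper avoids a global atom analysis by a two-step reduction. First, over $G_1=\{0,g,2g,4g\}$ every zero-sum sequence has even $\mathsf v_g$, so replacing $g^{\mathsf v_g(B)}$ by $(2g)^{\mathsf v_g(B)/2}$ is a transfer homomorphism onto $\mathcal B(\{0,2g,4g\})\cong\mathcal B(C_3)$; this gives $\mathcal L(G_1)=\mathcal L(C_3)$, $\mathsf c(G_1)=3$, $\rho(G_1)=3/2$ at no combinatorial cost, and in particular settles the ``$\supset$'' inclusion since $\mathcal B(G_1)$ is divisor-closed in $\mathcal B(G_0)$. Second, only the four atoms containing $3g$ need to be listed, namely $(3g)^2$, $g^3(3g)$, $g(2g)(3g)$, $(4g)^2(3g)g$, and the proofs of $\mathsf c(A)\le 3$ and $\rho(\mathsf L(A))\le 3/2$ reduce to normalizing the occurrences of $3g$ and then running the norm-counting argument described above. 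If you want to salvage your more direct approach, you would need to (i) correct the atom list, and (ii) supply an actual proof of the $3/2$ bound and of $\mathsf c(G_0)\le 3$; as written, both are missing. The final step, $\mathcal L(G_0)=\mathcal L(C_3)\subset\mathcal L(C_2^5)$, is indeed immediate from Theorem A, as you say.
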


\begin{proof}
Theorem {\bf A} (stated in the Introduction) implies that $\mathcal L (C_3)= \mathcal L (C_2 \oplus C_2)$ has the given form and, clearly, $\mathcal L (C_2 \oplus C_2) \subset \mathcal L (C_2^5)$. Thus it remains to show that $\mathcal L (G_0)$ has the indicated form.
We set $G_1 = \{0, g, 2g, 4g\}$ and proceed in two steps.

1. Since $G_2 = \{0, 2g, 4g\}$ is a cyclic group of order three, we obtain that $\mathsf c (G_2) = 3$, $\rho (G_2)=3/2$, and
\[
\mathcal L (G_2) = \{ y + 2k + [0,k] \colon y, k \in \N_0 \} = \mathcal L (C_3) = \mathcal L (C_2 \oplus C_2) \subset \mathcal L (C_2^5) \,.
\]
For every $B \in \mathcal B (G_1)$ the multiplicity $\mathsf v_g (B)$ is even. Thus the homomorphism $\theta\colon \mathcal B (G_1) \to \mathcal B (G_2)$, defined by $\theta (B) = g^{-\mathsf v_g (B)}(2g)^{\mathsf v_g (B)/2}B$, is a transfer homomorphism. This implies that $\mathsf c (G_1)=\mathsf c (G_2)$, $\rho (G_1)=\rho (G_2)=3/2$, and $\mathcal L (G_1) = \mathcal L (G_2)$ (for background on transfer homomorphism we refer to \cite[Section 3.2]{Ge-HK06a}).

2. Since $\mathcal B (G_1)$ is a divisor-closed submonoid of $\mathcal B (G_0)$, it follows that $3 = \mathsf c (G_1) \le \mathsf c (G_0)$, $3/2 = \rho (G_1) \le \rho (G_0)$, and $\mathcal L (G_1) \subset \mathcal L (G_0)$. There are precisely four atoms containing $3g$, namely
\[
U_0 = (3g)^2, \ U_1 = g^3(3g), \ U_2 = g(2g)(3g), \ \text{and} \ U_3 = (4g)^2(3g)g \,.
\]
We continue with the following two assertions. Let $A \in \mathcal B (G_0)$.
\begin{enumerate}
\item[{\bf A1.}\,] $\mathsf c (A) \le 3$

\item[{\bf A2.}\,] $\rho ( \mathsf L (A)) \le 3/2$.
\end{enumerate}
Suppose that {\bf A1} and {\bf A2} hold. Then $\mathsf c (G_0)=3$, $\rho (G_0)=3/2$, and $\mathcal L (G_0) = \mathcal L (G_1)$.

\smallskip
{\it Proof of \,{\bf A1}}.\, There is a $3$-chain of factorizations from any factorization $z \in \mathsf Z (A)$ to a factorization $z^* \in \mathsf Z (A)$ where $\mathsf v_{U_0} (z^*)$ is maximal, say $z^* = z_1 U_0^m$ and $A = A_1U_0^m$. First, we suppose that $\mathsf v_{3g}(A)$ is even. Then $m = \mathsf v_{3g}(A)/2$, and $A_1 \in \mathcal B (G_1)$. Since $\mathsf c (A_1) \le 3$, any two factorizations of $A$, which are divisible by $U_0^m$, can be concatenated by a $3$-chain of factorizations, whence $\mathsf c (A) \le 3$. If $\mathsf v_{3g}(A)$ is odd, then there is an $i \in [1,3]$ with $\mathsf v_{U_i} (z_1)=1$ and $\mathsf v_{U_j} (z_1)=0$ for $j \in [1,3] \setminus \{i\}$. Arguing as above we infer that $\mathsf c (A) \le 3$.

\smallskip
{\it Proof of \,{\bf A2}}.\, In order  to verify that $\rho ( \mathsf L (A)) \le 3/2$, we have to show that, for any two lengths $m_1, m_2 \in \mathsf L (A)$, we have $m_2/m_1 \le 3/2$. There are precisely two atoms with $g$-norm greater than one. These are $U_3$ and $U_4 = (4g)^3$ and we have $\| U_3 \|_g = \| U_4 \|_g = 2$. When in a $3$-chain of factorizations the length increases, then the number of atoms with $g$-norm two decreases by one and the number of atoms with $g$-norm one increases by two. Let $z_1$ and $z_2$ be factorizations of length $m_1$ and $m_2$ and consider a $3$-chain of factorizations from $z_1$ to $z_2$. Suppose there are $k$ atoms with $g$-norm one and $\ell$ atoms with $g$-norm two in the factorization $z_1$. It follows that in the factorization $z_2$ there are $k-s$ atoms with $g$-norm one and $\ell + 2s$ atoms with $g$-norm two for some $s \in [0, \min \{k, \ell \}]$. Then
\[
\frac{m_2}{m_1} = \frac{k+\ell+s}{k+\ell} \le \frac{3}{2} \,. \qedhere
\]
\end{proof}

\begin{lemma} \label{7.2}
Let $G$ be a finite abelian group, $g \in G$ with $\ord (g)=6$ and $G_0 = \{0, g,  -g \}$. Then
\[
\mathcal L (G_0) = \{ y + 2k + 3 \cdot [0,k] \colon y, k \in \N_0 \} \subset \mathcal L (C_2^5) \,.
\]
\end{lemma}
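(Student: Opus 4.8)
The plan is to determine the factorization structure of $\mathcal B(G_0)$ explicitly and then to match the resulting sets of lengths with sets already known to lie in $\mathcal L(C_2^5)$.

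First I would list the atoms of $\mathcal B(G_0)$. Writing $-g = 5g$, a sequence $0^a g^b (-g)^c$ is a zero-sum sequence exactly when $b \equiv c \pmod 6$, and a short check shows that the only minimal zero-sum sequences over $G_0$ are the length-$1$ atom $0$, the length-$2$ atom $g(-g)$, and the two length-$6$ atoms $g^6$ and $(-g)^6$ (any other zero-sum sequence contains one of these three as a proper subsequence). Since $0$ is itself an atom, every factorization of $0^a g^b (-g)^c$ simply splits off the $a$ copies of $0$, so $\mathsf L(0^a g^b(-g)^c) = a + \mathsf L(g^b(-g)^c)$. This reduces the whole problem to the sequences $g^b(-g)^c$ with $b \equiv c \pmod 6$, and the summand $a$ accounts for the free shift $y$ in the statement.

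Second, for a fixed such sequence (say $b \ge c$) I would count its factorizations by the numbers $p, q, r$ of copies of $g^6$, $(-g)^6$, and $g(-g)$, subject to $6p + r = b$ and $6q + r = c$. Eliminating $p$ and $r$ leaves a one-parameter family indexed by $q \in [0, \lfloor c/6 \rfloor]$, so $\mathsf L(g^b(-g)^c)$ is a single arithmetic progression whose maximum is attained by using as many copies of $g(-g)$ as possible and whose number of terms is $1 + \lfloor c/6 \rfloor$. To certify the two endpoints cleanly I would use the $g$-norm bound $\|A\|_g/(n-1) \le \min \mathsf L(A) \le \max \mathsf L(A) \le \|A\|_g$ with $n = 6$. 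Re-indexing by the minimum and the number of steps $k$ then yields the family of arithmetic progressions displayed in the statement.

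Third, for the inclusion $\mathcal L(G_0) \subset \mathcal L(C_2^5)$ I would realise each of these progressions over $C_2^5$ in the style of Section \ref{3}. The key building block is the atom $U = U_{[1,5]}$, for which $\mathsf L(U^{2k}) = 2k + 4\cdot[0,k]$; multiplying $U^{2k}$ by a suitable fixed zero-sum sequence (supplying the base progression) and by squares $e_i^2$ (supplying the shift $y$) produces the required set as a sumset of sets of lengths. Matching the base progressions to concrete products of the $U_I$ and $V_I$, together with a separate check of the singleton case $k=0$ and of the smallest values of $k$, completes the argument, exactly as in Lemmas \ref{3.2} and \ref{3.4}. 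The main obstacle is the second step: proving that the factorization lengths of $g^b(-g)^c$ fill a complete arithmetic progression with no gaps and no additional terms, uniformly in $b$ and $c$, and identifying its two endpoints precisely rather than merely up to the norm inequalities. Once the progression is pinned down, both the closed-form description and the embedding into $\mathcal L(C_2^5)$ follow from the routine constructions above.
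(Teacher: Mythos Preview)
Your plan matches the paper's argument step for step: list the four atoms $0$, $g(-g)$, $g^{6}$, $(-g)^{6}$, strip off the copies of $0$, and parametrise the remaining factorizations of $g^{b}(-g)^{c}$ by a single integer. The paper's proof does not spell out the embedding into $\mathcal L(C_2^5)$ beyond ``whence the claim follows'', so your third paragraph is already more detailed than the original.

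There is, however, a concrete slip you would hit when actually carrying out the count. From $6p+r=b$ and $6q+r=c$ the factorization length is $p+q+r=(b-c)/6+c-4q$, so consecutive values of $q$ change the length by $4$, not $3$: trading one copy each of $g^{6}$ and $(-g)^{6}$ for six copies of $g(-g)$ increases the length by $6-2=4$. Hence $\mathcal L(G_0)=\{\,y+2k+4\cdot[0,k]\colon y,k\in\N_0\,\}$, and your sentence ``re-indexing \ldots\ yields the family of arithmetic progressions displayed in the statement'' is exactly where the argument would break---the ``$3$'' in the displayed formula (and the $5$'s in the paper's own proof) are misprints for $\ord(g)=6$. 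The $g$-norm bounds you invoke are also too loose to pin down the endpoints here; the parametrisation itself already gives them exactly. With the corrected difference $4$ the inclusion into $\mathcal L(C_2^5)$ becomes a one-liner, since $\mathsf L\bigl(0^{y}\,U^{2k}\bigr)=y+2k+4\cdot[0,k]$ for $U=e_{0}e_{1}\cdots e_{5}$; the extra apparatus you sketch (a separate base progression, sumsets, small-$k$ checks) is then unnecessary.
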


\begin{proof}
Let $A \in \mathcal B (G_0)$. Without restriction we may suppose that $\mathsf v_g (A) \ge \mathsf v_{-g}(A)$. Thus there are $j \in [0,4]$, $k, \ell, m \in \N_0$ such that
\[
A = g^{5k+j}(-g)^{5k+j}0^m g^{5\ell} \,.
\]
Thus
\[
\mathsf L (A) = j+\ell+m + \mathsf L ( g^{5k}(-g)^{5k} = j + \ell + m + 2k + 3 \cdot [0,k] \,,
\]
whence the claim follows.
\end{proof}

\begin{lemma} \label{7.3}
Let $G$ be a finite abelian group, $g \in G$ with $\ord (g) = 6$, and   $G_0 = \{0, g, 3g, -g\}$. Then $\mathcal L (G_0) \subset \mathcal L (C_2^5)$.
\end{lemma}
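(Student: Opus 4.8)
The plan is to determine $\mathcal L (G_0)$ explicitly and then to verify, set by set, that each of its members already lies in $\mathcal L (C_2^5)$. First I would record the atoms of $\mathcal B (G_0)$: apart from $0$, these are $g (-g)$ and $(3g)^2$ of length two, $g^3 (3g)$ and $(-g)^3 (3g)$ of length four, and $g^6$ and $(-g)^6$ of length six (so $\mathsf D (G_0) = 6$). Since the map $x \mapsto -x$ maps $G_0$ onto itself and since a multiplicity of $0$ merely shifts every set of lengths, it suffices to analyze $\mathsf L (B)$ for $B = g^a (-g)^b (3g)^c$ with $a \ge b$. In any factorization of such a $B$ the occurrences of $3g$ are used in $x$ atoms $g^3 (3g)$, in $y$ atoms $(-g)^3 (3g)$, and in $(c-x-y)/2$ atoms $(3g)^2$, subject to $3x \le a$, $3y \le b$, $x + y \le c$ and $x + y \equiv c \pmod 2$, while the remaining part $g^{a-3x}(-g)^{b-3y}$ is a zero-sum sequence over $\{0, g, -g\}$, all of whose sets of lengths are, as one checks directly, arithmetic progressions with difference $4$.

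Running over the admissible pairs $(x,y)$ then writes $\mathsf L (B)$ as a finite union of translates of difference-$4$ progressions, and a short analysis should show that each such union is — in every case that occurs — a singleton, an arithmetic progression with difference $4$, or an arithmetic progression with difference $2$ (this last being an {\rm AMP} with period $\{0,2\}$). The inclusion $\mathcal L (G_0) \subseteq \mathcal L (C_2^5)$ then splits into three parts. Singletons are trivial. A difference-$4$ progression $L = y + 4 \cdot [0,k]$ arising here satisfies $y \ge 2k$, because $\rho (L) \le \rho (C_6) = 3$, and hence $L = \mathsf L \big( U^{2k} e_1^{2(y-2k)} \big)$, using $\mathsf L (U^{2k}) = 2k + 4 \cdot [0,k]$ as in Lemma \ref{3.2}. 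The difference-$2$ progressions are the genuinely new case; I would realize each of them over $C_2^5$ by an explicit construction in the spirit of Lemmas \ref{3.2} and \ref{3.4}, taking a suitable product of powers of $U$ with short zero-sum sequences built from $U_{[1,2]}, V_{[1,2]}, U_{[3,4]}, V_{[3,4]}$, and verifying that the union of the partial sets of lengths is exactly the required progression.

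The main obstacle is twofold. In the classification step one must control which triples $\big( x, y, (c-x-y)/2 \big)$ are admissible: the bounds $3x \le a$, $3y \le b$, $x + y \le c$ and the parity condition interact, and the number of atoms $g^6$, $(-g)^6$ that can be formed from what is left depends on these choices, so a case analysis on the residues of $a, b, c$ and on which of the bounds are active is unavoidable — and one must confirm that no set of lengths outside the three listed shapes ever occurs. In the realization step the difficulty is that over $C_2^5$ there are exchange moves changing factorization length by $1$ as well as by $2$, so the sequences chosen to realize the difference-$2$ progressions must be arranged so that the unwanted odd-length factorizations simply cannot arise; this is the same type of careful bookkeeping carried out in Subsections \ref{3.b} and \ref{3.c}, and once it is done the appeal to the earlier results is immediate.
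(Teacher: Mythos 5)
Your strategy is viable but genuinely different from the paper's, and its two decisive steps are only announced, not carried out. The paper never classifies $\mathcal L (G_0)$: after reducing (via Lemmas \ref{7.1} and \ref{7.2}) to the case where $g$, $-g$ and $3g$ all occur, it writes $A = \big( (-g)g \big)^{6r+s} \big( (3g)^2 \big)^t \big( (3g)g^3 \big)^u \big( g^6 \big)^v$, uses the $g$-norm to see that this is a factorization of maximal length, peels off the summand $u+v$, and then exhibits a sequence $A_2 = U^{2r} \big( (e_1+e_2+e_3)(e_4+e_5+e_{[1,5]}) \big)^t \prod_{i=1}^s e_i^2$ over $C_2^5$ with $\mathsf L (A_1) = \mathsf L (A_2)$ directly, atom species by atom species. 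Your route (compute $\mathcal L (G_0)$ explicitly, then realize each shape) replaces this single correspondence by two separate verifications. The classification you predict (singletons, difference-$4$ progressions, difference-$2$ progressions) is in fact correct -- all atoms of $\mathcal B (G_0 \setminus \{0\})$ have even length, so each $\mathsf L (B)$ sits in one parity class, and the two elementary relations (six pairs $g(-g)$ merging into $g^6(-g)^6$, three pairs plus $(3g)^2$ merging into $(g^3(3g))((-g)^3(3g))$) produce exactly those shapes -- but this needs to be proved, and your sketch correctly identifies that the interaction of the constraints is the work.

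The concrete gap is in the realization step. The difference-$2$ progressions that actually occur include sets such as $\{3,5,7\} = \mathsf L \big( g^6(-g)^6(3g)^2 \big)$, i.e.\ sets $y + 2 \cdot [0,k]$ with $y < 2k$; these cannot be obtained by padding a product of independent difference-$2$ gadgets with units, and they also cannot be built from the atoms $U_{[1,2]}, V_{[1,2]}, U_{[3,4]}, V_{[3,4]}$ you name, because those have odd length and force odd-step length changes (they are the gadgets of Subsections \ref{3.b} and \ref{3.c}, designed for periods containing both odd and even residues). What is needed is a \emph{square} gadget interlocking with $U^2$: with $h = e_1+e_2+e_3 = e_4+e_5+e_0$ one has the relations $h^2 \cdot e_1^2e_2^2e_3^2 = (he_1e_2e_3)^2$ and $h^2 \cdot U^2 = (he_1e_2e_3)(he_4e_5e_0)\,U \cdot \ldots$, each changing the length by $2$, and one checks that $\mathsf L \big( U^{2r} (h^2)^t \prod_{i=1}^s e_i^2 \big)$ is exactly the required difference-$2$ progression (e.g.\ $\mathsf L (U^2 h^2) = \{3,5,7\}$). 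This is precisely the paper's $A_2$; without producing it (or an equivalent), your plan does not close.
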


\begin{proof}
Let $A \in \mathcal B (G_0)$. By Lemmas \ref{7.1} and \ref{7.2}, we may suppose that $\mathsf v_g (A)$, $\mathsf v_{-g}(A)$, and $\mathsf v_{3g} (A)$ are positive.
Without restriction we may suppose that $\mathsf v_0 (A)=0$ and $\mathsf v_g (A) \ge \mathsf v_{-g} (A)$. Then $A$ can be written in the form
\[
A = \big( (-g)g \big)^{6r+s} \big( (3g)^2 \big)^t \big( (3g)g^3 \big)^u \big( g^6 \big)^v \,,
\]
where $r,t, v \in \N_0$, $s \in [0,5]$, and $u \in [0,1]$. Since all atoms in the decomposition have $g$-norm one, it follows that $\max \mathsf L (A) = 6r+s+t+u+v$. Thus, we obtain that
\[
\mathsf L (A) = u+v+\mathsf L (A_1), \quad \text{where} \quad A_1 = \big( (-g)g \big)^{6r+s} \big( (3g)^2 \big)^t \,.
\]
Defining
\[
A_2 = U^{2r} \big( (e_1+e_2+e_3)(e_4+e_5+e_{[1,5]}) \big)^t \prod_{i=1}^s (e_i^2) \quad \text{where} \quad U = e_{[1,5]} e_1 \cdot \ldots \cdot e_5  \,,
\]
we infer that  $\mathsf L (A_1) = \mathsf L (A_2) \in \mathcal L (C_2^5)$.
\end{proof}

\subsection{Proof of Theorem \ref{1.1}.1} \label{3.e}
We have to consider finite abelian groups $G$ with $\mathsf D (G) \in [4,6]$. Let $m \in [4,6]$. The claims that $\mathcal L (C_m)$ is minimal in $\Omega_m$ and that $\mathcal L (C_2^{m-1})$ is maximal in $\Omega_m$ follow from \cite[Theorem 3.5]{Ge-Sc-Zh17b}. It remains to verify that $\mathcal L (C_m) \subsetneq \mathcal L (C_2^{m-1})$. Since $\mathcal L (C_m) \ne \mathcal L (C_2^{m-1})$ by Proposition \ref{2.2}.3, it suffices to verify inclusion.

1. By \cite[Theorem 7.3.2]{Ge-HK06a}, we have
\begin{itemize}
\item $\mathcal L (C_4) = \bigl\{ y + k+1 + [0,k] \, \colon \, y,
      \,k \in \N_0 \bigr\} \,\cup\,  \bigl\{ y + 2k + 2 \cdot [0,k] \, \colon
      \, y,\, k \in \N_0 \bigr\} $,

\item $\mathcal L (C_2^3)  =  \bigl\{ y + (k+1) + [0,k] \, \colon \, y \in \N_0, \ k \in [0,2] \bigr\}$ \newline
      $\quad \text{\, } \ \qquad$ \quad $\cup \ \bigl\{ y + k + [0,k] \, \colon \, y \in \N_0, \ k \ge 3 \bigr\}
      \cup \bigl\{ y + 2k
      + 2 \cdot [0,k] \, \colon \, y ,\, k \in \N_0 \bigr\}$,
\end{itemize}
whence $\mathcal L (C_4) \subset \mathcal L (C_2^3)$.

\smallskip
2. Theorems 4.3 and 4.8 in \cite{Ge-Sc-Zh17b} provide explicit descriptions of $\mathcal L (C_5)$ and of $\mathcal L (C_2^4)$. These descriptions show that $\mathcal L (C_5) \subset \mathcal L (C_2^4)$.

\smallskip
3.  Let $G$ be a cyclic group of order $|G|=6$ and let $g \in G$ with $\ord (g)=6$. Let $A' \in \mathcal B (G)$. If $A'= 0^kA$, with $k \in \N_0$ and $A \in \mathcal B (G\setminus \{0\})$, then $\mathsf L (A') = k+\mathsf L (A)$ and it suffices to verify that $\mathsf L (A) \in \mathcal L (C_2^5)$.  If $\{g, -g\} \not\subset \supp (A)$, say $-g \not\in G_0$, then $\supp (A) \subset \{g,2g,3g,4g \}$, whence Lemma \ref{7.1} implies that $\mathsf L (A) \in \mathcal L (C_2^5)$. Thus from now on we suppose that $\{g, -g \} \subset \supp (A)$.  If $\supp (A) \subset \{g,3g,-g\}$, then $\mathsf L (A) \in \mathcal L (C_2^5)$ by Lemma \ref{7.3}.  Thus it remains to consider the following four cases.

\smallskip
\noindent
CASE 1: $\supp (A) = \{g,2g,-g\}$ or $\supp (A) = \{g, 4g, -g\}$.

Then \cite[Lemma 3.6]{Ge-Sc19d} implies that $\mathsf L (A)$ is either an interval or an AMP with periods $\{0,1,4\}$, $\{0,3,4\}$, $\{0,1,2,4\}$, $\{0,1,3,4\}$, or $\{0,2,3,4\}$,  and all these cases actually occur. Thus $\mathsf L (A) \in \mathcal L (C_2^5)$ by Propositions \ref{3.3}, \ref{3.5}, \ref{3.6}, and Proposition \ref{3.11}.

\smallskip
\noindent
CASE 2: $\supp (A) = \{g,2g,4g, -g\}$.

Then \cite[Lemma 3.7]{Ge-Sc19d} implies that $\mathsf L (A)$ is either an interval or an AMP with periods  $\{0,1,2,4\}$, $\{0,1,3,4\}$, or $\{0,2,3,4\}$, and all these cases actually occur. Thus $\mathsf L (A) \in \mathcal L (C_2^5)$ by Proposition \ref{3.3} and Proposition \ref{3.c}.

\smallskip
\noindent
CASE 3: $\supp (A) = \{g,2g,3g,-g\}$ or $\supp (A) = \{g, 3g, 4g, -g\}$.

Then \cite[Lemma 3.3]{Ge-Sc19d} shows that $\mathsf L (A)$ is  an interval, whence $\mathsf L (A) \in \mathcal L (C_2^5)$ by Proposition \ref{3.3}.

\smallskip
\noindent
CASE 4: $\supp (A) = G \setminus \{0\}$.

Then $\mathsf L (A)$ is an interval by \cite[Theorem 7.6.9]{Ge-HK06a}, whence $\mathsf L (A) \in \mathcal L (C_2^5)$ by Proposition \ref{3.3}. \qed

\section{Proof of Theorem \ref{1.1}.2} \label{4}

The goal in this section is to prove Theorem \ref{1.1}.2.
We start with three lemmas.

\begin{lemma} \label{4.1}
Let $G$ be a finite abelian group with $|G| \ge 3$.
\begin{enumerate}
\item The following statements are equivalent.
      \begin{enumerate}
      \item[(a)] Every $L \in \mathcal L (G)$ with $\{2, \mathsf D (G)\} \subset L$ satisfies \ $L = \{2, \mathsf D (G) \}$.

      \item[(b)] $\{ 2, \mathsf  D (G) \} \in \mathcal L (G)$.

      \item[(c)] $G$ is either cyclic or an elementary $2$-group.
      \end{enumerate}

\item We have $\daleth (G) \le  2+ \max \Delta (G) \le \mathsf c (G) \le \mathsf D (G)$,  and $\daleth (G) = \mathsf D (G)$  if and only if $G$ is either cyclic or an elementary $2$-group.
\end{enumerate}
\end{lemma}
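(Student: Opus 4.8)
The inequalities $\daleth(G)\le 2+\max\Delta(G)\le\mathsf c(G)\le\mathsf D(G)$ in Statement~2 are precisely \eqref{basic inequalities} for $G_0=G$, so the whole lemma reduces to the circle (a)~$\Leftrightarrow$~(b)~$\Leftrightarrow$~(c), together with $\daleth(G)=\mathsf D(G)$ being equivalent to (b). The last point is the cheapest: by Proposition~\ref{2.2} we have $\rho_2(G)=\mathsf D(G)$, hence every $L\in\mathcal L(G)$ with $2\in L$ satisfies $L\subseteq[2,\mathsf D(G)]$; so if $\{2,\mathsf D(G)\}\in\mathcal L(G)$ then $\daleth(G)\ge\mathsf D(G)$, whence $\daleth(G)=\mathsf D(G)$ by the chain, while conversely any $L$ witnessing $\daleth(G)=\mathsf D(G)$ has $\min(L\setminus\{2\})=\mathsf D(G)$ and lies in $[2,\mathsf D(G)]$, so $L=\{2,\mathsf D(G)\}$.

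The next step I would isolate is a normal form underlying all of (a), (b), (c). Let $L\in\mathcal L(G)$ with $\{2,\mathsf D(G)\}\subseteq L$ and choose $B\in\mathcal B(G)$ with $\mathsf L(B)=L$. Since $2\in L$, $B=V_1V_2$ with $V_1,V_2\in\mathcal A(G)$; since $B$ also factors into $\mathsf D(G)$ atoms of length $\ge 2$ while $|B|=|V_1|+|V_2|\le 2\mathsf D(G)$, we are forced to have $|V_1|=|V_2|=\mathsf D(G)$ and that long factorization to consist of $\mathsf D(G)$ atoms of length exactly $2$; thus $B=T\cdot(-T)$ for some $T$ with $|T|=\mathsf D(G)$, and a short case distinction on $B=0^kB'$ shows $0\nmid B$. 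A first consequence is (a)~$\Rightarrow$~(b): for a maximal atom $U$ (so $|U|=\mathsf D(G)$, $0\nmid U$) the sequence $B=U(-U)$ has $\{2,\mathsf D(G)\}\subseteq\mathsf L(B)$, so (a) yields $\mathsf L(B)=\{2,\mathsf D(G)\}\in\mathcal L(G)$.

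For (c)~$\Rightarrow$~(a) I would combine the normal form with the classification of atoms of maximal length. If $G$ is cyclic of order $n=\mathsf D(G)$ with generator $g$, then $V_i=(a_ig)^n$ with $\gcd(a_i,n)=1$, and $B=V_1V_2=T(-T)$ forces $a_2\equiv-a_1$, so $B=h^n(-h)^n$ with $h=a_1g$; a direct computation over $\supp(B)=\{h,-h\}$ gives $\mathsf L(B)=\{2,n\}$. If $G\cong C_2^{\,r}$ with $r=\mathsf D(G)-1$, then $-x=x$, so $B=T^2$; maximal atoms over $C_2^{\,r}$ are squarefree, so $T^2=V_1V_2$ forces $\mathsf v_x(T)\le 1$ for all $x$ and then $V_1=V_2=T$, and since $\supp(T)$ is a zero-sum set of size $r+1$ without proper nonempty zero-sum subset one computes $\mathsf L(T^2)=\{2,r+1\}$. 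In both cases $\mathsf L(B)=\{2,\mathsf D(G)\}$, which is (a); in particular (c)~$\Rightarrow$~(b), and with the above links this gives (c)~$\Rightarrow$~(a)~$\Rightarrow$~(b)~$\Leftrightarrow$~($\daleth(G)=\mathsf D(G)$).

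It remains to prove (b)~$\Rightarrow$~(c), and this is the hard part. Starting again from $B=T(-T)$ with $|T|=\mathsf D(G)=\mathsf d(G)+1$, the sequence $T$ is not zero-sum free; pick a minimal zero-sum subsequence $S_1\mid T$ with $|S_1|<\mathsf D(G)$ (if no such one exists, $T$ is already a maximal atom) and write $T=S_1S_2$. For every $\ell\in\mathsf L\bigl(S_2(-S_2)\bigr)$ the element $B$ has a factorization of length $2+\ell$; since $S_2(-S_2)$ admits the factorization $\prod_{x\mid S_2}x(-x)$ of length $|S_2|$, comparison with $\mathsf L(B)=\{2,\mathsf D(G)\}$ (the case $\mathsf D(G)\le 3$, i.e.\ $G\in\{C_3,C_2\oplus C_2\}$, being trivial) forces $\mathsf L\bigl(S_2(-S_2)\bigr)=\{\mathsf D(G)-2\}$ and $|S_1|=2$, say $S_1=y(-y)$ with $0\ne y\in G$. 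Applying this to every minimal zero-sum subsequence of $T$ of length $<\mathsf D(G)$, one concludes that either $T$ is a maximal atom, or every minimal zero-sum subsequence of $T$ has length $2$. One is thereby reduced to two rigid alternatives for $T$, and the main obstacle is to deduce from this rigidity that $\mathsf r(G)\le 1$ or $\exp(G)\le 2$. I expect to settle this by a finite case analysis on $\mathsf r(G)$ and $\exp(G)$, using $\mathsf D^*(G)\le\mathsf D(G)$ and the value $\max\Delta^*(G)=\max\{\exp(G)-2,\mathsf r(G)-1\}$ from Proposition~\ref{2.2} to exclude the groups that are neither cyclic nor elementary $2$-groups; alternatively one may quote the corresponding characterization from the literature on systems of sets of lengths (see \cite{Sc16a}). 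Once (b)~$\Rightarrow$~(c) is established the circle closes, and (b)~$\Rightarrow$~(a) follows from (c)~$\Rightarrow$~(a).
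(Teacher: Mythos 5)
Much of your proposal is sound and in fact more explicit than the paper, whose entire proof of Statement~1 is a citation of \cite[Theorem 6.6.3]{Ge-HK06a} and whose proof of the reverse implication in Statement~2 cites \cite[Theorem 6.4.7]{Ge-HK06a}. Your observation that $\daleth(G)=\mathsf D(G)$ is equivalent to (b) (via $\rho_2(G)=\mathsf D(G)$), the normal form $B=T(-T)$ with $0\nmid B$ and $|T|=|V_1|=|V_2|=\mathsf D(G)$, the implication (a)$\Rightarrow$(b), and the verification of (c)$\Rightarrow$(a) through the classification of maximal atoms over $C_n$ and over $C_2^r$ are all correct.

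The genuine gap is the implication (b)$\Rightarrow$(c), which is the entire substance of the lemma, and your proposal does not close it. You correctly reduce to the dichotomy that $T$ is either a minimal zero-sum sequence of length $\mathsf D(G)$ or a sequence all of whose minimal zero-sum subsequences have length $2$, but you never derive from either alternative that $G$ must be cyclic or an elementary $2$-group. The fallback route you sketch cannot work as stated: a ``finite case analysis on $\mathsf r(G)$ and $\exp(G)$'' must exclude infinitely many groups, and the invariant $\max\Delta^*(G)=\max\{\exp(G)-2,\mathsf r(G)-1\}$ gives no leverage here --- knowing that $\{2,\mathsf D(G)\}\in\mathcal L(G)$ only yields $\mathsf D(G)-2\in\Delta(\supp(B))$, not that $\mathsf D(G)-2$ is the \emph{minimum} of $\Delta(G_0)$ for some $G_0$, so no element of $\Delta^*(G)$ is produced. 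What is actually required is to show that for an \emph{arbitrary} sequence $T$ of one of the two rigid types over a group that is neither cyclic nor an elementary $2$-group (and the structure of maximal atoms is unknown for general $G$), the product $T(-T)$ admits a factorization of length in $[3,\mathsf D(G)-1]$; compare Lemma~\ref{5.1} and Proposition~\ref{5.3}, where already for $G=C_2\oplus C_{2n}$ this takes a full structural classification of the maximal atoms. That step is precisely the content of \cite[Theorem 6.6.3]{Ge-HK06a}; your parenthetical ``alternatively one may quote the corresponding characterization from the literature'' is in effect the paper's proof, but as a standalone argument your primary route is incomplete at its decisive point.
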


\begin{proof}
1. See \cite[Theorem 6.6.3]{Ge-HK06a}.

2. The chain of inequalities was already observed in \eqref{basic inequalities}. If $G$ is cyclic or an elementary $2$-group, then 1. implies that $\daleth (G) = \mathsf D (G)$. The reverse implication follows from \cite[Theorem 6.4.7]{Ge-HK06a}.
\end{proof}

\begin{lemma} \label{4.2}
Let $G$ be a cyclic group of order $|G|=n \ge 4$. Then $\{2, n-2, n-1\} \in \mathcal L (G)$.
\end{lemma}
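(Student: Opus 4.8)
The plan is to exhibit an explicit zero-sum sequence $B$ over $G = \langle g \rangle$ with $\ord(g) = n$ whose set of lengths is exactly $\{2, n-2, n-1\}$. The natural candidate is a product of two long atoms that forces only a handful of factorizations. I would take $B = g^{n} \cdot (-g)^{n}$ rewritten using a mixture of $g$, $2g$, and $-g$, or more concretely work with $B = (2g)\, g^{n-2} \cdot (-g)^{n-1}$ or a similar sequence of total length $2n-1$; the point is that $\sigma(B) = 0$ and $B$ factors as a product of two atoms (so $2 \in \mathsf L(B)$), while any other factorization must use many short atoms $g\cdot(-g)$ and hence has length close to $n$. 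The $g$-norm is the tool to control the maximal length: since $\|B\|_g$ will be $n-1$ (or $n$), the bound $\max \mathsf L(B) \le \|B\|_g$ caps the length, and the inequality $\|A\|_g/(n-1) \le \min \mathsf L(A)$ caps the minimal length from below, pinning it to $2$.

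First I would fix the precise sequence and compute $\sigma(B) = 0$ and $\|B\|_g$. Then I would enumerate the atoms that can appear in a factorization of $B$: these are essentially $g^n$, $(-g)^n$, $g\cdot(-g)$, and the few atoms of $g$-norm $\le 1$ built from the support of $B$ (if I include a $2g$, also an atom like $(2g)\cdot g^{n-2}$ or $(2g)\cdot (-g)\cdot g^{n-3}\cdots$ — I'd keep the support minimal, ideally $\{g,-g\}$ with one extra element to break half-factoriality). Next, using that $\|\cdot\|_g$ is a homomorphism to $\N_0$ with $\|g^n\|_g = \|(-g)^n\|_g = 1$ wait — one must be careful: I want $B$ itself to have small $g$-norm, so the decomposition $B = g^n\cdot(-g)^n$ (norm $2$) is the right picture, giving $\max\mathsf L(B)\le 2(n-1)$, which is too weak. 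So instead I would use $B$ with $\|B\|_g = n-1$ exactly, e.g. something like $B = g^{n-1}\cdot(-g)\cdot(\text{an atom of norm } n-2)$, and then argue directly by counting how the two long atoms $g^n$, $(-g)^n$ (each of norm $1$ here after rescaling) interact with the $n-2$ copies of $g\cdot(-g)$. Concretely the cleanest choice is probably $B = \big(g^{n-1}(-g)^{n-1}\big)\cdot\big(g\cdot(-g)\big) \cdot$ nothing — that's just $g^n(-g)^n$ again. I think the correct sequence is $B = (n-1)g \cdot g \cdot (-g)^{n-1}$, where $(n-1)g = -g$... this needs the extra element $2g$. I would therefore settle on $B = (2g)\cdot g^{2n-4} \cdot (-g)^{?}$ chosen so that $\sigma = 0$, and identify its factorizations explicitly.

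The main obstacle, and the part requiring genuine care, is choosing $B$ so that the set of lengths is \emph{exactly} $\{2,n-2,n-1\}$ — neither more (no intermediate lengths, no length-$2$-vs-large gap filled in) nor fewer. Getting the three values $2$, $n-2$, $n-1$ to all appear while excluding $3,4,\dots,n-3$ forces the support and exponents to be quite rigid: $2$ comes from a two-atom factorization, $n-1$ from the "all short atoms" factorization $(g\cdot(-g))^{n-1}$ together with a leftover atom, and $n-2$ from trading one short pair for a single longer atom. I expect the verification that no length in $[3,n-3]$ occurs to be the delicate combinatorial step, handled by the norm inequality $\min\mathsf L(A)\ge \lceil \|A\|_g/(n-1)\rceil$ forcing $\min = 2$, plus a direct argument that once a factorization is not one of the two "extreme" ones it must already be the all-short one. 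I would present the final sequence and then dispatch the three containments ($\{2,n-2,n-1\}\subseteq\mathsf L(B)$) by exhibiting factorizations, and the reverse containment by the $g$-norm bounds together with a short case analysis on which long atoms occur.
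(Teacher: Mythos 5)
Your overall strategy---realizing $\{2,n-2,n-1\}$ as $\mathsf L\big(U(-U)\big)$ for a long atom $U$ supported on $\{g,-g\}$ plus one extra element $2g$, with the three lengths coming from the two-atom factorization, the all-short-pairs factorization, and a single ``trade''---is exactly the right one and is what the paper does. But the proposal never arrives at a proof: the one piece of data on which everything hinges, the sequence $B$ itself, is left undetermined, and each concrete candidate you write down is defective. The sequence $(2g)\,g^{n-2}(-g)^{n-1}$ has sum $g\neq 0$, so it is not even in $\mathcal B(G)$; the sequence $g^n(-g)^n$ has set of lengths $\{2,n\}$, not the required set; and $(2g)\,g^{2n-4}(-g)^{?}$ is never pinned down. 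The correct choice is $U=(2g)g^{n-2}\in\mathcal A(G)$ and $B=U(-U)=(2g)(-2g)\,g^{n-2}(-g)^{n-2}$.

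The verification you defer as ``the delicate combinatorial step'' is short once $B$ is fixed, but it is not delivered by the $g$-norm inequalities you lean on: here $\|B\|_g=n-1$, so the upper bound gives $\max\mathsf L(B)\le n-1$, while $\|B\|_g/(n-1)$ only gives $\min\mathsf L(B)\ge 1$, and neither bound excludes the lengths in $[3,n-3]$. What does the work is the observation that in any factorization of $B$ the atom containing $2g$ must be one of $U$, $(2g)(-2g)$, or $(2g)(-g)^2$ (any other candidate $(2g)g^{k}(-g)^{k+2}$ with $k\ge 1$ contains the proper zero-sum subsequence $g(-g)$, and the analogous statement holds for $-2g$). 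The three cases force the factorization to be $U\cdot(-U)$, or $\big((2g)(-2g)\big)\big(g(-g)\big)^{n-2}$, or $\big((2g)(-g)^2\big)\big((-2g)g^2\big)\big(g(-g)\big)^{n-4}$, of lengths $2$, $n-1$, and $n-2$ respectively. You correctly described all three target factorizations in words; what is missing is the sequence that admits exactly these and nothing else, together with this exclusion argument.
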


\begin{proof}
Let $g \in G$ with $\ord (g)=n$. Then $U = g^{n-2}(2g) \in \mathcal A (G)$ and $\mathsf L \big( U(-U) \big) = \{2, n-2, n-1 \}$.
\end{proof}

\begin{lemma} \label{4.3}
Let $G = C_2^r$ with $r \ge 3$. Then $\{2, r-1, r\} \in \mathcal L (G)$ if and only if $r \in [3,5]$.
\end{lemma}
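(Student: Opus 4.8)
We argue the two implications separately; sufficiency is a short explicit matter, while necessity is a structural analysis of factorizations in an elementary $2$-group and is where the work lies.

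\smallskip
\emph{Sufficiency.} If $r=3$ then $\{2,r-1,r\}=\{2,3\}$, and $\{2,3\}\in\mathcal L(C_2^3)$ by the explicit description of $\mathcal L(C_2^3)$ recalled in Subsection~\ref{3.e} (take $y=0$, $k=1$ in the first family there). If $r=4$ then $\{2,r-1,r\}=[2,4]\in\mathcal L(C_2^4)$ by \cite[Theorem 4.8]{Ge-Sc-Zh17b} (see also Lemma~\ref{3.2}.1). If $r=5$ then $\{2,r-1,r\}=\{2,4,5\}\in\mathcal L(C_2^5)$ by Lemma~\ref{3.2}.1. (In each case one may instead exhibit a maximal atom $U$ and a second atom $V$ of length $\mathsf D(C_2^r)$ with $\mathsf L(UV)=\{2,r-1,r\}$; for $r=4$ one may take $U=e_0e_1e_2e_3e_4$ with $e_0=e_1+e_2+e_3+e_4$ and $V=e_1e_2e_3(e_2+e_3+e_4)(e_1+e_4)$, the upper bound $\max\mathsf L(UV)\le4$ coming from the fact that $UV$ is not a square.)

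\smallskip
\emph{Necessity --- reduction.} Let $r\ge6$ and suppose, for contradiction, that $\mathsf L(B)=\{2,r-1,r\}$ for some $B\in\mathcal B(C_2^r)$. From $\min\mathsf L(B)=2$ write $B=A_1A_2$ with $A_1,A_2\in\mathcal A(C_2^r)$. One has $0\nmid B$ (otherwise $\mathsf L(B)=\mathsf v_0(B)+\mathsf L(0^{-\mathsf v_0(B)}B)$ together with $2\in\mathsf L(B)$ forces $\mathsf L(B)=\{2\}$), so all atom divisors of $B$ have length $\ge2$, and --- a feature of elementary $2$-groups --- every atom of length $\ge3$ is squarefree (a repeated $g$ in such an atom $A$ yields the proper zero-sum divisor $Ag^{-2}$). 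As $\max\mathsf L(B)=r$ forces $2r\le|B|=|A_1|+|A_2|\le2\mathsf D(C_2^r)=2r+2$ and hence $|A_i|\ge r-1\ge5$, both $A_1,A_2$ are squarefree. Put $S=\gcd(A_1,A_2)$, $A_1=SA_1'$, $A_2=SA_2'$; then $S,A_1',A_2'$ have pairwise disjoint supports, $S$ is squarefree, and $S$ is zero-sum free (a proper divisor of the atom $A_1$), so $\supp S$ is linearly independent. With $B=S^2W$, $W=A_1'A_2'$, the only elements of $B$ of multiplicity $>1$ are those of $S$ (multiplicity $2$), and $\mathsf L(S^2)=\{s\}$, where $s:=|S|$ (the atom divisors of $S^2$ are just the $g^2$, $g\in\supp S$). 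The key estimate: a factorization of $B$ with $a$ length-$2$ atoms (necessarily of the form $g^2$, $g\in\supp S$, so $a\le s$) and the rest of length $\ge3$ has length at most $(|B|+a)/3\le(|B|+s)/3$; setting this equal to $r$ gives $s\ge3r-|B|\ge r-2$. On the other hand $s\le\min(|A_1|,|A_2|)$, with equality forcing $A_1=A_2$, whence $B=A_1^2$ and $\mathsf L(B)=\{2,|A_1|\}$, a two-element set --- impossible. A brief check (using $|A_1|+|A_2|=|B|\le2r+2$, $|A_i|\le r+1$, and that $A_1=Sx$, $A_2=Sy$ would give $x=\sigma(S)=y$) rules out $|B|=2r$ and $s=r$, so $|B|\in\{2r+1,2r+2\}$ and $s\in\{r-2,r-1\}$. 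Finally, some $A_i$ has length $\mathsf D(C_2^r)$, hence cannot lie in a proper subgroup $C_2^k$ ($k<r$, as $\mathsf D(C_2^k)=k+1\le r$), so $\langle\supp B\rangle=C_2^r$.

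\smallskip
\emph{Necessity --- the main case and the rest.} Take first $|B|=2r+2$, $s=r-1$. Then $A_1'=g_1h_1$, $A_2'=g_2h_2$ with $\sigma(A_1')=\sigma(A_2')=\sigma(S)=:c$, so $h_1=g_1+c$, $h_2=g_2+c$; and since $A_1$ cannot lie in $\langle S\rangle$ (there $|A_1|=r+1>r=\mathsf D(C_2^{r-1})$), neither $g_1$ nor $g_2$ is in $\langle S\rangle$, so $g_1,h_1,g_2,h_2$ all lie in the unique nontrivial coset of $\langle S\rangle$ and every pairwise sum among them lies in $\langle S\rangle$. As $\supp S$ is independent, each $v\in\langle S\rangle$ has a unique divisor $R_0(v)\mid S$ with sum $v$; put $P=R_0(g_1+g_2)$, $Q=R_0(g_1+h_2)$. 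Then $\sigma(P)+\sigma(Q)=(g_1+g_2)+(g_1+h_2)=g_2+h_2=c=\sigma(S)$, so $\supp P\bigtriangleup\supp Q=\supp S$, i.e.\ $\{\supp P,\supp Q\}$ partitions $\supp S$, and $|P|,|Q|\ge1$. Using $h_1+h_2=g_1+g_2$ and $h_1+g_2=g_1+h_2$, one verifies that $g_1g_2P$, $h_1h_2P$, $g_1h_2Q$, $h_1g_2Q$, $A_1=Sg_1h_1$, $A_2=Sg_2h_2$, and $W=g_1h_1g_2h_2$ (atom because $c\ne0$) all divide $B$, which gives the factorizations $(g_1g_2P)(h_1h_2P)\prod_{g\in\supp Q}g^2$, $(g_1h_2Q)(h_1g_2Q)\prod_{g\in\supp P}g^2$, $W\prod_{g\in\supp S}g^2$, and $A_1A_2$; hence $\{\,2,\ 2+|P|,\ r+1-|P|,\ r\,\}\subseteq\mathsf L(B)$. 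For this to be contained in $\{2,r-1,r\}$ one would need $2+|P|\in\{r-1,r\}$ and $r+1-|P|\in\{r-1,r\}$, i.e.\ $|P|\in\{r-3,r-2\}\cap\{1,2\}=\emptyset$ for $r\ge6$ --- contradiction. The case $|B|=2r+1$ (so $\{|A_1|,|A_2|\}=\{r,r+1\}$, $s=r-1$, $|W|=3$) yields $\mathsf L(B)=\{2,r\}$ by a direct enumeration of atom divisors, again impossible; and the case $|B|=2r+2$, $s=r-2$ (so $|W|=6$, and $r-1\in\mathsf L(B)$ via $W\prod g^2$ when $W$ is an atom) is treated by the same ``cross'' idea, splitting according to the images of the six extra elements in $C_2^r/\langle S\rangle\cong C_2^2$, and again producing for $r\ge6$ a factorization of length in the nonempty interval $[3,r-2]$.

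\smallskip
\emph{Main obstacle.} The crux is this last subcase $s=r-2$: one must organize the finitely many ways $A_1',A_2'$ can sit modulo $\langle S\rangle$, list the corresponding ``mixed'' atom divisors $T'\cdot R_0(\sigma(T'))$ of $B=S^2W$ for $T'\mid W$ with $\sigma(T')\in\langle S\rangle$, check that the claimed sequences really are atoms, and show uniformly that the resulting factorization lengths escape $\{2,r-1,r\}$ precisely when $r\ge6$ --- exactly the point at which the window $\{r-1,r\}$ becomes too narrow to absorb the extra lengths forced by the cross-structure. Keeping this case analysis finite and transparent is where the care is needed.
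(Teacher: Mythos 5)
Your sufficiency argument and your reduction are fine, and the case you do carry out in full ($|B|=2r+2$, $s=|\gcd(A_1,A_2)|=r-1$) is correct; it is in substance the paper's Case 3.2, reorganized around the gcd $S$ instead of around the atom structure of a length-$r$ factorization (your $P$ plays the role of the paper's $e_I$, and the two factorizations of lengths $2+|P|$ and $r+1-|P|$ are exactly the two the paper exhibits). However, the proof is not complete: two of your cases are only asserted, and you say so yourself. The case $|B|=2r+1$ (i.e.\ $\{|A_1|,|A_2|\}=\{r,r+1\}$) is dismissed with ``$\mathsf L(B)=\{2,r\}$ by a direct enumeration of atom divisors'' with no enumeration given, and the case $|B|=2r+2$, $s=r-2$ is explicitly labelled the ``main obstacle'' and left as a programme (``organize the finitely many ways $A_1',A_2'$ can sit modulo $\langle S\rangle$\,\dots''). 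Since that last case is, on your own account, where the real work lies, the argument as written does not establish necessity.

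For what it is worth, the missing case does not require the full ``cross'' analysis over $C_2^r/\langle S\rangle\cong C_2^2$ that you anticipate. In the paper's treatment (its Case 3.1), one takes a factorization of length $r$, which in this situation consists of $r-2$ atoms $e_i^2$ and two atoms of length $3$, writes $U=e_1\cdot\ldots\cdot e_re_0$ and $V=e_1\cdot\ldots\cdot e_{r-2}(e_{r-1}+e_r)g(e_0+g)$, and simply swaps $e_0$ against $g(e_0+g)$: the sequence $V'=e_1\cdot\ldots\cdot e_{r-2}(e_{r-1}+e_r)e_0$ is an atom, while $U'=e_1\cdot\ldots\cdot e_rg(e_0+g)$ is a product of exactly two atoms because $e_1\cdot\ldots\cdot e_r$ is zero-sum free. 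This gives $3\in\mathsf L(B)$, which is impossible for $r\ge 6$ since $3\notin\{2,r-1,r\}$. A single explicit rearrangement therefore closes the case; similarly, the paper's Case 2 disposes of $|B|=2r+1$ by showing the unique length-$3$ atom in a length-$r$ factorization forces $\mathsf L(UV)=\{2,r\}$. You should either import such arguments or actually carry out the enumeration you promise; as it stands the proof has a genuine gap precisely at the point you identify as the crux.
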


\begin{proof}
We have $\{2,3\} \in \mathcal L (C_2^2) \subset \mathcal L (C_2^3)$ by Theorem {\bf A}. We have $\{2,3,4\} \in \mathcal L (C_2^4)$ by \cite[Theorem 4.8]{Ge-Sc-Zh17b}, and $\{2,4,5\} \in \mathcal L (C_2^5)$ by Lemma \ref{3.2}.1.

Suppose that $r \ge 6$ and
assume to the contrary that there are $U,V \in \mathcal A (G)$ such that $\mathsf L (UV) = \{2, r-1, r\}$. Without restriction we may suppose that $|U| \ge |V|$. Clearly, we have $|V| \ge r$. We distinguish three  cases.

\smallskip
\noindent
CASE 1: \ $|U|=|V|=r$.

Then $V = -U$ and $\langle \supp (U) \rangle \cong C_2^{r-1}$. Since $\mathsf D (C_2^{r-1}) = r$ and $\{2, r \} \subset \mathsf L (UV)$, Lemma \ref{4.1}.1 implies that $\mathsf L (UV)= \{2, r\}$, a contradiction.

\smallskip
\noindent
CASE 2: \ $|U|=r+1$ and $|V|=r$.

Then there are $W_0, \ldots, W_{r-1} \in \mathcal A (G)$ such that $UV = W_0W_1 \cdot \ldots \cdot W_{r-1}$, where $V = e_0 \cdot \ldots \cdot e_{r-1}$ and   $W_i = e_i^2$ for all $i \in [1, r-1]$, $|W_0|=3$, and $e_0 \mid W_0$. Since $\langle \supp (U) \rangle \cong C_2^r$, there is $e_r \in G$ such that $(e_1, \ldots, e_r)$ is a basis of $G$, $U = (e_0+e_r)e_1 \cdot \ldots \cdot e_r$, and $W = (e_0+e_r)e_0e_r$. This implies that  $\mathsf L (UV)= \{2, r\}$, a contradiction.

\smallskip
\noindent
CASE 3: \ $|U|=|V|=r+1$.

We set $UV = W_1 \cdot \ldots \cdot W_r$, where $W_1, \ldots, W_r \in \mathcal A (G)$ with $2\le |W_1| \le \ldots \le |W_r|$. There are the following  two cases.

\smallskip
\noindent
CASE 3.1: \ $|W_1|= \ldots = |W_{r-2}|=2$ and $|W_{r-1}|=|W_r|=3$.

We set $W_i = e_i^2$ for $i \in [1, r-2]$,   $W_{r-1} = e_{r-1}e_r(e_{r-1}+e_r)$,  and  $U = e_1 \cdot \ldots \cdot e_r e_0$. Then $V$ has the form
\[
V = e_1 \cdot \ldots \cdot e_{r-2} (e_{r-1}+e_r) g (e_0+g) \quad \text{for some} \ g \in G \,.
\]
Clearly, $V' = \Big( e_1 \cdot \ldots \cdot e_{r-2} (e_{r-1}+e_r) e_0 \Big) \in \mathcal A (G)$. Since $e_1 \cdot \ldots \cdot e_r$ is zero-sum free, $U' = e_1 \cdot \ldots \cdot e_r g (e_0+g)$ is a product of two atoms. Thus $UV = U'V'$ has a factorization of length three, a contradiction because $3 < r-1$.

\smallskip
\noindent
CASE 3.2: \ $|W_1|= \ldots = |W_{r-1}|=2$ and $|W_r|=4$.

We set $W_i = e_i^2$ for $i \in [1, r-1]$,  $U = e_1 \cdot \ldots \cdot e_{r-1}e_r e_0$, and $W_r = e_re_0 e_r'e_0'$. Then
\[
V = e_1 \cdot \ldots \cdot e_{r-1}e_r' e_0' \,.
\]
Then $e_0'+e_r'=e_1+ \ldots + e_{r-1}$, whence $e_r' = e_r+g$ and $e_0' = e_0+g$ for some $g \in G$. Since $W_r \in \mathcal A (G)$, it follows that $g \ne 0$. Since $(e_1, \ldots, e_r)$ is a basis of $G$ and $e_r' \notin \langle e_1, \ldots , e_{r-1} \rangle$, it follows that $g= e_r'+e_r \in \langle e_1, \ldots , e_{r-1} \rangle$. Thus $g= e_I = \sum_{i \in I} e_i$ with $\emptyset \ne I \subset [1, r-1]$. If $I = [1,r-1]$, then $e_r' = e_0$, a contradiction to $W_r \in \mathcal A (G)$. Thus $UV$ has a factorization
\[
UV = \Big( e_r (e_r+e_I)\prod_{i \in I} e_i \Big) \Big( e_0 (e_0+e_I)\prod_{i \in I} e_i \Big) \prod_{i \in [1, r-1]\setminus I} e_i^2
\]
of length $2+(r-1-|I|)=r+1-|I|$ and a factorization
\[
UV = \Big( e_0(e_r+e_I)\prod_{i \in [1, r-1]\setminus I} e_i \Big) \Big( e_r(e_0+e_I)\prod_{i \in [1, r-1]\setminus I} e_i \Big) \prod_{i \in I} e_i^2
\]
of length $2 + |I|$. If $2+|I|=r$, then $r+1-|I|=3 < r-1$, a contradiction. If $2+|I|=r-1$, then $r+1-|I|= 4 < r-1$, a contradiction.
\end{proof}

\medskip
\begin{proof}[Proof of Theorem \ref{1.1}.2]
Let $m \ge 7$.
By \cite[Theorem 3.5]{Ge-Sc-Zh17b},  $\mathcal L (C_2^{m-1})$ is a maximal element of $\Omega_m$,  $\mathcal L (C_m)$ is a minimal element of $\Omega_m$, and if $G$ is an abelian group with $\mathsf D (G)=m$ and $\mathcal L (G) \subset \mathcal L (C_2^{m-1})$, then $G$ is either cyclic or an elementary $2$-group. Thus it remains to prove the following two assertions.

\begin{enumerate}
\item[{\bf A1.}\,] $\mathcal L (C_m)$ is not contained in $\mathcal L (C_2^{m-1})$.

\item[{\bf A2.}\,] If $G$ is a finite abelian group with $\mathsf D (G)=m$ and $\mathcal L (C_m) \subset \mathcal L (G)$, then $G$ is cyclic of order $m$.
\end{enumerate}

\smallskip
Since $m \ge 7$, {\bf A1}  follows from Lemmas \ref{4.2} and \ref{4.3}. To verify {\bf A2}, let $G$ be a finite abelian group with $\mathsf D (G)=m$ such that $\mathcal L (C_m) \subset \mathcal L (G)$. Then Lemma \ref{4.1}.1 implies that $\{2, \mathsf D (G)\} \in \mathcal L (C_m) \subset \mathcal L (G)$. Now, again Lemma \ref{4.1}.1 implies that $G$ is either cyclic or an elementary $2$-group. Finally, {\bf A1} implies that $G$ is not an elementary $2$-group, whence $G$ is cyclic of order $m$.
\end{proof}

\section{Proof of Theorem \ref{1.1}.3, \ref{1.1}.4, and of Corollary \ref{1.2}} \label{5}

The goal in this section is to prove Statements $3$ and $4$ of Theorem \ref{1.1}  and Corollary \ref{1.2}.
We need some lemmas.

\begin{lemma} \label{5.1}
Let $G$ be a finite abelian group with $\mathsf D (G) \ge 5$.
\begin{enumerate}
\item The following statements are equivalent.
      \begin{enumerate}
      \item $G$ is isomorphic to $C_2 \oplus C_{2n}$ with $n \ge 2$.

      \item $\{2, \mathsf D (G)-1, \mathsf D (G)\} \in \mathcal L (G)$.
      \end{enumerate}

\item  The following statements are equivalent.
      \begin{enumerate}
      \item $\daleth (G) = \mathsf D (G)-1$.

      \item $G$ is isomorphic either to $C_2^{r-1} \oplus C_4$ for some $r \ge 2$ or to $C_2 \oplus C_{2n}$ for some $n \ge 2$.
      \end{enumerate}
\end{enumerate}
\end{lemma}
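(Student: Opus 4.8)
\emph{Overall plan.} I would prove Part 1 first and then deduce Part 2 from Part 1 together with Lemma \ref{4.1}.2. In each part one implication is a \emph{realization} statement — produce a zero-sum sequence whose set of lengths has the prescribed shape — and the other is an \emph{inverse} statement — recover the group from the shape of a single set of lengths. The realization directions are explicit computations in a fixed basis; the inverse directions carry the weight of the lemma.

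\emph{Part 1, realization} (a)$\Rightarrow$(b). Assume $G\cong C_2\oplus C_{2n}$ with $n\ge 2$, fix a basis $(e_1,e_2)$ with $\ord(e_1)=2$ and $\ord(e_2)=2n$, and put $\mathsf D=\mathsf D(G)=2n+1$. I take the atom $U=e_1(e_1+e_2)e_2^{2n-1}$ of length $\mathsf D$ and set $B=U(-U)$. The three factorizations $U\cdot(-U)$,\ $e_1^2\cdot\bigl((e_1+e_2)(e_1-e_2)\bigr)\cdot(e_2(-e_2))^{2n-1}$, and $\bigl(e_1(e_1+e_2)(-e_2)\bigr)\cdot\bigl(e_1(e_1-e_2)e_2\bigr)\cdot(e_2(-e_2))^{2n-2}$ show $\{2,\mathsf D-1,\mathsf D\}\subset\mathsf L(B)$, while $|B|=2\mathsf D$ and $0\nmid B$ give $\mathsf L(B)\subset[2,\mathsf D]$. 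For the converse inclusion the key observation is that \emph{every atom of length at least $3$ dividing $B$ contains exactly one copy of $e_1$}; since $\mathsf v_{e_1}(B)=2$, a factorization of $B$ either uses the atom $e_1^2$ — which forces length $\mathsf D$ — or uses precisely two atoms of length at least $3$, and these must jointly contain the single $e_1+e_2$ and the single $e_1-e_2$ in $B$; the short enumeration of such atoms then forces length $2$ or $\mathsf D-1$. Hence $\mathsf L(B)=\{2,\mathsf D-1,\mathsf D\}$.

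\emph{Part 1, inverse} (b)$\Rightarrow$(a). Suppose $\mathsf L(B)=\{2,\mathsf D-1,\mathsf D\}$ with $\mathsf D=\mathsf D(G)\ge 5$. From $\min\mathsf L(B)=2$ one gets $|B|\le 2\mathsf D$; and $0\nmid B$ (otherwise cancelling $0$ would put $1$ into a set of lengths), so $\max\mathsf L(B)\le|B|/2$ forces $|B|=2\mathsf D$. Therefore $B=UV$ with $U,V$ atoms of length $\mathsf D$; the length-$\mathsf D$ factorization consists of $\mathsf D$ atoms of length $2$, so $-B=B$; and a length-$(\mathsf D-1)$ factorization of $B$ has atom-length multiset $(4,2^{\mathsf D-2})$ or $(3,3,2^{\mathsf D-3})$. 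It then remains to show that, among all finite abelian groups with $\mathsf D(G)\ge 5$, only $G\cong C_2\oplus C_{2n}$ admits a $B$ of this form having no factorization of length in $[3,\mathsf D-2]$. I would obtain this from the structure of zero-sum sequences of maximal length $\mathsf D(G)$ — available in complete form for groups of rank at most two, where $\mathsf D=\mathsf D^*$ — combined with a case distinction on the isomorphism type of $G$ (or by invoking the corresponding known inverse result). \emph{This step is the main obstacle of the lemma.}

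\emph{Part 2.} For (b)$\Rightarrow$(a): if $G\cong C_2\oplus C_{2n}$ then Part 1 gives $\{2,\mathsf D-1,\mathsf D\}\in\mathcal L(G)$, hence $\daleth(G)\ge\mathsf D(G)-1$, while $\daleth(G)\le\mathsf D(G)$ always and $\daleth(G)\ne\mathsf D(G)$ by Lemma \ref{4.1}.2 since $G$ is neither cyclic nor an elementary $2$-group; so $\daleth(G)=\mathsf D(G)-1$. If $G\cong C_2^{r-1}\oplus C_4$ with $r\ge 2$, the same upper bound applies, and for the lower bound I would exhibit a zero-sum sequence $B=UV$ — a product of two suitable atoms of length $\mathsf D(G)$ chosen so that $B$ is not fully symmetric — with $\mathsf L(B)=\{2,\mathsf D(G)-1\}$, computing its set of lengths by the same atom-bookkeeping as in Part 1. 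For (a)$\Rightarrow$(b): if $\daleth(G)=\mathsf D(G)-1$, pick $B$ with $2\in\mathsf L(B)$ and $\min(\mathsf L(B)\setminus\{2\})=\mathsf D(G)-1$; since $\max\mathsf L(B)\le\mathsf D(G)$, we have $\mathsf L(B)=\{2,\mathsf D-1,\mathsf D\}$ or $\mathsf L(B)=\{2,\mathsf D-1\}$. In the first case Part 1 gives $G\cong C_2\oplus C_{2n}$. In the second case $\{2,\mathsf D\}\notin\mathcal L(G)$, so Lemma \ref{4.1}.1 rules out $G$ cyclic or elementary $2$, Part 1 rules out $G\cong C_2\oplus C_{2n}$, and analysing the two near-maximal atoms composing $B$ exactly as in the inverse step of Part 1 leaves only $G\cong C_2^{r-1}\oplus C_4$. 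As in Part 1, the inverse steps are the hard part, which I expect to settle with the classification of zero-sum sequences of (near-)maximal length.
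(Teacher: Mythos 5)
The paper does not actually prove this lemma: both parts are quoted from \cite[Theorem 1.1 and Proposition 3.5]{Ge-Zh15b}, where they are substantial standalone theorems. Within your proposal, the realization direction 1.(a)$\Rightarrow$(b) is correct and essentially complete: with $U=e_1(e_1+e_2)e_2^{2n-1}$, the bookkeeping observation that every atom of length at least $3$ dividing $U(-U)$ contains exactly one copy of $e_1$ and exactly one of $e_1\pm e_2$ does pin $\mathsf L(U(-U))$ down to $\{2,\mathsf D-1,\mathsf D\}$, and your reduction of 2.(a)$\Rightarrow$(b) to the two cases $\mathsf L(B)=\{2,\mathsf D-1,\mathsf D\}$ and $\mathsf L(B)=\{2,\mathsf D-1\}$ is sound.

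The genuine gap lies in the inverse directions, which you flag yourself but whose proposed resolution would not go through. You want to settle 1.(b)$\Rightarrow$(a) via ``the structure of zero-sum sequences of maximal length $\mathsf D(G)$ \dots\ combined with a case distinction on the isomorphism type of $G$''. That structure is known only for groups of rank at most two and a few further special cases; for a general finite abelian group neither $\mathsf D(G)$ nor the shape of its longest minimal zero-sum sequences is known, and the quantifier here runs over \emph{all} $G$ with $\mathsf D(G)\ge 5$, so an isomorphism-type case distinction is an infinite task requiring new ideas rather than a routine check. The situation in 2.(a)$\Rightarrow$(b) is worse: when the $\daleth$-witness satisfies $\mathsf L(B)=\{2,\mathsf D-1\}$, the two atoms composing $B$ need only have length $\mathsf D(G)-1$ or $\mathsf D(G)$ and need not be negatives of each other, so one is dealing with near-maximal atoms for which no inverse result is available even in rank two. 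Finally, the realization $\{2,\mathsf D(G)-1\}\in\mathcal L(C_2^{r-1}\oplus C_4)$ needed for 2.(b)$\Rightarrow$(a) is asserted but not constructed; note that any $B=U(-U)$ with $|U|=\mathsf D(G)$ automatically admits the length-$\mathsf D(G)$ factorization into pairs $g(-g)$, so by Part 1 such symmetric candidates are useless for $r\ge 3$ and a genuinely asymmetric product of two atoms must be produced and analysed. The fallback you mention, ``invoking the corresponding known inverse result'', is precisely the citation the paper makes; without it, the characterization halves of both parts remain unproved.
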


\begin{proof}
See \cite[Theorem 1.1 and Proposition 3.5]{Ge-Zh15b}.
\end{proof}

\begin{lemma} \label{5.2}
Let $G = C_2 \oplus C_{2n}$ with $n \ge 2$.
A sequence $S$ over $G$ of length $\mathsf D (G) = 2n+1$ is a minimal zero-sum sequence if and only if it has one of the following two forms.
      \begin{itemize}
      \item[(a)] $S = g^{2n-1}h (g-h)$ for some $g \in G$ with $\ord (g) = 2n$ and some $h \in G \setminus \langle g \rangle$.

      \item[(b)] $S = e g^v  (g+e)^{2n-v}$ for some $g \in G$ with $\ord (g)=2n$, $e \in G \setminus \langle g \rangle$ with $\ord (e)=2$, and $v \in [3, 2n-3]$ odd.
      \end{itemize}
\end{lemma}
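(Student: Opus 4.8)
\emph{The ``if'' direction} is a direct verification. For $S=g^{2n-1}h(g-h)$ one has $\sigma(S)=2ng=0$ and $|S|=2n+1$, while every proper nonempty subsequence has sum $jg$, $jg+h$ or $jg+(g-h)$ with $0\le j\le 2n-1$, none of which vanishes since $h\notin\langle g\rangle$; hence $S\in\mathcal A(G)$. For $S=eg^v(g+e)^{2n-v}$ one uses $\ord(e)=2$, $\ord(g)=2n$ and $\langle e\rangle\cap\langle g\rangle=\{0\}$: a subsequence $e^ag^b(g+e)^c$ has sum $(b+c)g+(a+c)e$, which vanishes only for $b+c\in\{0,2n\}$ and then forces the empty sequence or, because $v$ is odd, all of $S$; the same computation shows $v$ must be odd for $S$ to be zero-sum. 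I would also record that the values $v\in\{1,2n-1\}$ in (b) only reproduce sequences already present in (a), e.g.\ $eg(g+e)^{2n-1}=(g+e)^{2n-1}\,g\,e$ with $e=(g+e)-g$, so the two families need not be disjoint.

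\emph{The ``only if'' direction.} Let $S$ be a minimal zero-sum sequence with $|S|=\mathsf D(G)=2n+1$. Recall that $G$ has exactly three subgroups of index $2$, at least two of them cyclic of order $2n$. For an index-$2$ subgroup $H$ with projection $\psi\colon G\to G/H\cong C_2$, the number $2t_H$ of terms of $S$ outside $H$ is even (as $\psi(\sigma(S))=0$), and the subsequence $S_H$ of the terms lying in $H$ is zero-sum free, being a proper subsequence of a minimal zero-sum sequence (proper because $|S|>\mathsf D(H)$); hence $2n+1-2t_H=|S_H|\le\mathsf d(H)\le 2n-1$, so $t_H\ge 1$. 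Put $t^*=\min_H t_H$.

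Suppose $t^*=1$ and choose a cyclic $H=\langle g\rangle$ of order $2n$ attaining the minimum. Then $S_H$ is zero-sum free over $H\cong C_{2n}$ of maximal length $2n-1$, so by the classical structure theorem for extremal zero-sum free sequences over cyclic groups $S_H=g^{2n-1}$ with $\ord(g)=2n$. If $h,h'$ are the two terms of $S$ outside $H$, then $h+h'=-\sigma(S_H)=g$, so $h'=g-h$ and $h\notin\langle g\rangle$: this is form (a).

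Suppose $t^*\ge 2$; this is the crux, and I expect it to be the main obstacle. Now every index-$2$ subgroup contains at least four terms of $S$ outside it. The plan is to combine two sources of rigidity. Fixing a cyclic $H=\langle g\rangle$ of order $2n$ and an element $e\in G\setminus H$ of order $2$, project $S$ modulo $\langle e\rangle$ to get $\overline{S}$ over $G/\langle e\rangle\cong C_{2n}$ of length $2n+1$. By minimality of $S$, every proper nonempty zero-sum subsequence of $\overline{S}$ lifts to a subsequence of $S$ with sum in $\langle e\rangle\setminus\{0\}=\{e\}$; applying this to the blocks of any factorization of $\overline{S}$ into $C_{2n}$-atoms and using $\sum\sigma(\cdot)=\sigma(S)=0$ forces every such factorization to consist of an \emph{even} number $\ge 2$ of atoms. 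Independently, examining all three index-$2$ subgroups $H_1,H_2,H_3$ at once — they share the common index-$4$ subgroup $H_1\cap H_2=H_1\cap H_3=H_2\cap H_3$ and cover $G$ — one obtains tight restrictions on the multiplicities of the terms of $S$ from the bounds $|S_{H_i}|\le 2n-3$, and combining these with the inverse/structure results for long zero-sum free sequences over $C_{2n}$ applied inside each $H_i$ (handling the few small $n$ by hand) should pin down $\overline{S}$ up to the choice of generator, and hence $S=eg^v(g+e)^{2n-v}$ with $v$ odd; the range $v\in[3,2n-3]$ then follows, the excluded values falling under form (a). The delicate part — where essentially all the work lies — is excluding the many a priori possible configurations of $\overline{S}$, which requires a careful case analysis; a viable alternative is an inductive reduction, splitting off a pair of terms outside $H$ whose sum lies in the index-$4$ subgroup so as to pass to $C_2\oplus C_{2(n-1)}$, once one checks that such a pair can always be removed without destroying minimality.
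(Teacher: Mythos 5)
There is a genuine gap. The paper does not prove this lemma at all: it is quoted from the literature (Gao--Geroldinger, \emph{On the order of elements in long minimal zero-sum sequences}, Theorem 3.3 of \cite{Ga-Ge02}), so the only question is whether your argument is self-contained and complete. It is not. Your ``if'' direction is fine (both sum computations and the minimality checks are correct, and the observation that $v\in\{1,2n-1\}$ in (b) collapses into family (a) is a nice sanity check), and the $t^*=1$ branch of the ``only if'' direction is a genuine proof: the inverse theorem for $\mathsf d(C_{2n})$ forces $S_H=g^{2n-1}$ and the two remaining terms must sum to $g$, giving form (a). One small point there worth recording explicitly: when $n$ is even the third index-$2$ subgroup is $C_2\oplus C_n$ with $\mathsf d=n$, so $t_H\ge (n+1)/2\ge 2$ for that subgroup, and a subgroup attaining $t_H=1$ is automatically cyclic of order $2n$; when $n$ is odd all three index-$2$ subgroups are cyclic of order $2n$ anyway.

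The problem is the case $t^*\ge 2$, which is precisely where form (b) has to emerge, i.e.\ the substantive content of the lemma. What you offer there is a plan, not a proof: the parity constraint on factorizations of the projected sequence $\overline{S}$ over $C_{2n}$ and the simultaneous bounds $|S_{H_i}|\le 2n-3$ are true and relevant, but you never carry out the elimination of the ``many a priori possible configurations,'' and you say so yourself (``should pin down,'' ``the delicate part --- where essentially all the work lies''). Nothing in the sketch yet rules out, for instance, a sequence whose projection $\overline{S}$ is $\bar g^{\,a}(2\bar g)^{b}\cdots$ with three or more distinct values in $\langle \bar g\rangle$, nor does it show that exactly one term of $S$ lies in the order-$2$ coset component (the single ``$e$'' in form (b)). The proposed inductive reduction to $C_2\oplus C_{2(n-1)}$ is also unverified: splitting off a pair of terms need not preserve minimality of what remains, and you flag this yourself. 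As it stands the lemma is established only in the $t^*=1$ case; either complete the case analysis or do what the paper does and cite \cite{Ga-Ge02}.
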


\begin{proof}
See \cite[Theorem 3.3]{Ga-Ge02}.
\end{proof}

Let $G$ be a finite abelian group. Then every element $g \in G$ with $\ord (g)=\exp (G)$ can be extended to a basis of $G$. Thus in Case (a) of Lemma \ref{5.2} the element $g$ can be extended to a basis. In Case (b), $(g,e)$ and $(g+e, e)$ are bases of $G$. Let $G = C_2 \oplus C_{2n}$ with $n \ge 2$. Next we completely determine all sets $L \in \mathcal L (G)$ with $\{2, \mathsf D (G)\} \subset L$. This was done before in \cite[Lemma 3.2]{B-G-G-P13a} but,  unfortunately, that result is not correct.

\begin{proposition}\label{5.3}
Let $G = C_2\oplus C_{2n}$ with $n\ge 2$. Then
\[
\begin{aligned}
\big\{L\in \mathcal L(G)\colon \{2, \mathsf D(G)\}\subset L\}\big\} = & \ \big\{\{2, 2m, 2n-2m+2, 2n, 2n+1\}\colon m \in [1, n]\big\}  \ \cup \\
 & \ \big\{ \{ 2, 2n-2i, 2n+1-2i \colon i \in [0, (v-1)/2] \} \colon v \in [3, 2n-3] \ \text{\rm odd} \big\} \,.
\end{aligned}
\]
\end{proposition}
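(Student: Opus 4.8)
The plan is to reduce the assertion to the explicit computation of $\mathsf L(U(-U))$ for all minimal zero-sum sequences $U$ of maximal length $\mathsf D(G) = 2n+1$, and then to carry out this computation in the two cases furnished by Lemma \ref{5.2}.

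\emph{Step 1 (Reduction).} Let $B \in \mathcal B(G)$ with $\{2, \mathsf D(G)\} \subset \mathsf L(B)$. Writing $B = 0^k B'$ with $0 \nmid B'$ we get $\mathsf L(B) = k + \mathsf L(B')$, and since $\mathsf D(G) \ge 5$ this forces $k = 0$; thus every atom dividing $B$ has length $\ge 2$. Since $2 \in \mathsf L(B)$, write $B = UV$ with $U, V \in \mathcal A(G)$, so $|B| = |U| + |V| \le 2\,\mathsf D(G)$; on the other hand a factorization witnessing $\mathsf D(G) \in \mathsf L(B)$ gives $|B| \ge 2\,\mathsf D(G)$. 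Hence $|U| = |V| = \mathsf D(G)$, and every length-$\mathsf D(G)$ factorization of $B$ uses only atoms $x(-x)$; equivalently $\mathsf v_x(B) = \mathsf v_{-x}(B)$ whenever $2x \neq 0$ and $\mathsf v_x(B)$ is even whenever $2x = 0$. By Lemma \ref{5.2}, $U$ and $V$ are each of form (a) or (b). I would then use the multiplicity condition, together with the fact that $U$ carries one element with multiplicity $2n-1$ (form (a)) or two elements with multiplicities $v, 2n-v \ge 3$ (form (b)), to rule out the mixed pair and to force $V = -U$ otherwise; since $U(-U)$ always satisfies the multiplicity condition, one obtains
\[
\{\, L \in \mathcal L(G) : \{2, \mathsf D(G)\} \subset L \,\} \;=\; \{\, \mathsf L(U(-U)) : U \in \mathcal A(G),\ |U| = \mathsf D(G) \,\}.
\]

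\emph{Step 2 (Form (a)).} By the remark after Lemma \ref{5.2} and automorphism-invariance of $\mathcal L(G)$ we may assume $U = g^{2n-1} h (g-h)$ with $\ord(g) = 2n$, $h \notin \langle g \rangle$; set $a = h$, $b = g-h$ (so $a+b = g$) and $A = U(-U) = g^{2n-1}(-g)^{2n-1} a(-a) b(-b)$. The heart of this case is the complete list of atoms dividing $A$: the length-$2$ atoms $g(-g), a(-a), b(-b)$; the length-$3$ atoms $(-g)ab$ and $g(-a)(-b)$; for the unique odd $c \in [1,2n-1]$ with $cg = b-a$ the atoms $g^c a(-b)$, $(-g)^c(-a)b$ and the two of complementary length $2n-c+2$; and $U, -U$. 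A factorization of $A$ is then determined by which atom contains $a$, and running through these cases yields $\mathsf L(A) = \{2,\, c+1,\, 2n+1-c,\, 2n,\, 2n+1\}$, i.e.\ the first family with $m = (c+1)/2 \in [1,n]$; as $U$ varies, $c$ attains all odd values in $[1,2n-1]$, hence $m$ all of $[1,n]$.

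\emph{Step 3 (Form (b)).} Similarly assume $U = e g^v (g+e)^{2n-v}$ with $v \in [3,2n-3]$ odd; put $f = g+e$, so $A = U(-U) = e^2 g^v(-g)^v f^{2n-v}(-f)^{2n-v}$. Here the atoms dividing $A$ are $e^2, g(-g), f(-f)$ (length $2$), $g(-f)e$ and $(-g)fe$ (length $3$), $g^2(-f)^2$ and $(-g)^2 f^2$ (length $4$), and $U, -U$ (length $2n+1$). Encoding a factorization by the multiplicities of these atoms yields a small linear system whose solutions list all attainable lengths; comparing with the second family finishes the proof.

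\emph{Main obstacle.} The substantive work is in Steps 2 and 3: proving that the atom lists are \emph{complete} and then extracting both inclusions from the factorization counts. Overlooking an intermediate-length atom dividing $U(-U)$ is exactly the gap in \cite[Lemma 3.2]{B-G-G-P13a} that the present proposition fixes, so this is where essentially all the care lies; Step 1 is comparatively routine.
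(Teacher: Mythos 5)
Your proposal follows essentially the same route as the paper's proof: reduce to computing $\mathsf L\big(U(-U)\big)$ for the two types of maximal-length atoms from Lemma \ref{5.2}, list all atoms dividing $U(-U)$ (your lists agree with the paper's and are complete), and read off the attainable lengths by tracking which atoms absorb the distinguished elements, yielding exactly the two families in the statement. Your Step 1, which the paper leaves implicit in the assertion that $L=\mathsf L(U(-U))$ for some atom $U$ of length $\mathsf D(G)$, is a correct and worthwhile justification of that reduction.
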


\begin{proof}
Let $L\in \mathcal L(G)$ with $\{2, \mathsf D(G)\}\subset L$. Then there exists an atom $U\in \mathcal A(G)$ with $|U|=\mathsf D(G)$ such that $\mathsf L(U(-U))=L$.  According to the structure  of $U$, as given in  Lemma \ref{5.2}, we distinguish two cases.

\smallskip
\noindent
CASE 1: There exists a basis $(e_1, e_2)$ of $G$ with $\ord(e_1)=2n$ and $\ord(e_2)=2$ such that
\[
U=e_1^{2n-1}(x_1e_1+e_2)(x_2e_1+e_2), \quad \text{where} \quad x_1, x_2\in [0, 2n-1] \quad \text{ with} \quad x_1+x_2\equiv 1 \mod {2n} \,.
\]
Note that the congruence condition on $x_1$ and $x_2$ implies that $x_1 \ne x_2$ and that $|x_1-x_2|$ is odd. By symmetry, we may suppose that $x_1 > x_2$. We consider a factorization $z \in \mathsf Z \big( U(-U) \big)$ of length $|z| \in [2, 2n]$.
Let $W \in \mathcal A (G)$  be an atom occurring in the factorization $z$ and with  $x_1e_1+e_2\in \supp(W)$. Since $|W|=2$ would imply that $|z|=2n+1$, it follows that $|W| \in [3, \mathsf D (G)]$.
If $x_2e_1+e_2\in \supp(W)$, then
\[
W=U \ \text{ and } \ |z|=2 \qquad \text{ or } \qquad W=(-e_1)(x_1e_1+e_2)(x_2e_1+e_2) \ \text{ and } \ |z|=2n \,.
\]
Suppose $-x_2e_1+e_2\in \supp(W)$.  Then
\[
W=(-e_1)^{x_1-x_2}(x_1e_1+e_2)(-x_2e_1+e_2) \quad \text{and} \quad z = W (-W) \big( e_1(-e_1) \big)^{2n-1-(x_1-x_2)}
\]
or
\[
W=e_1^{2n-(x_1-x_2)}(x_1e_1+e_2)(-x_2e_1+e_2) \quad \text{and} \quad z = W (-W) \big( e_1(-e_1) \big)^{(x_1-x_2)-1} \,.
\]
This implies that
\[
\mathsf L(U(-U))=\{ 2, (x_1-x_2)+1, 2n +1 -(x_1-x_2), 2n, 2n+1\}\,.
\]
We set $x_1-x_2 = 2m-1$ and note that all values $m \in [1,n]$ can occur.

\smallskip
\noindent
CASE 2: There exists a basis $(e_1, e_2)$ of $G$ with $\ord(e_1)=2$ and $\ord(e_2)=2n$ such that
\[
U=e_1e_2^v(e_1+e_2)^{2n-v} , \quad \text{ where } \quad v\in [3, 2n-3] \quad  \text{ odd}  \,.
\]
Without restriction we may assume that $v \le  2n-v$. We list the atoms of $\mathcal A (G)$ which divide $U(-U)$:
\begin{itemize}
\item $U, -U$, \ $e_1^2, e_2(-e_2)$, and $(e_1+e_2)(e_1-e_2)$.

\item $(e_1+e_2)(-e_2)e_1$ and $(e_1-e_2)e_2e_1$.

\item $(e_1+e_2)^2(-e_2)^2$ and $(e_1-e_2)^2e_2^2$.
\end{itemize}
We set $W = (e_1+e_2)(-e_2)e_1$ and consider a factorization $z \in \mathsf Z \big( U(-U) \big)$ of length $|z| > 2$. There are precisely the following three types of factorizations.

\smallskip
\noindent
CASE 2.1: The atom $e_1^2$ divides $z$.

Then
\[
z = (e_1^2) \Big( e_2^2(e_1-e_2)^2 \Big)^i \Big( (-e_2)^2 (e_1+e_2)^2 \Big)^i \Big( e_2(-e_2) \Big)^{v-2i} \Big( (e_1+e_2)(e_1-e_2) \Big)^{2n-v-2i}
\]
with $i \in [0, (v-1)/2]$, whence $|z| \in \{ 2n+1 -2i \colon i \in [0, (v-1)/2] \}$.

\smallskip
\noindent
CASE 2.2: $W(-W)$ divides $z$.

Then
\[
z = W (-W) \Big( e_2^2(e_1-e_2)^2 \Big)^i \Big( (-e_2)^2 (e_1+e_2)^2 \Big)^i \Big( e_2(-e_2) \Big)^{v-1-2i} \Big( (e_1+e_2)(e_1-e_2) \Big)^{2n-1-v-2i}
\]
with $i \in [0, (v-1)/2]$, whence $|z| \in \{ 2n -2i \colon i \in [0, (v-1)/2] \}$.

\smallskip
\noindent
CASE 2.3: $W^2$ divides $z$ or $(-W)^2$ divides $z$.

We may assume without restriction that $W^2$ divides $z$. Then
\[
z = W^2  \Big( e_2^2(e_1-e_2)^2 \Big)^{i+1} \Big( (-e_2)^2 (e_1+e_2)^2 \Big)^i \Big( e_2(-e_2) \Big)^{v-2-2i} \Big( (e_1+e_2)(e_1-e_2) \Big)^{2n-2-v-2i}
\]
with $i \in [0, (v-3)/2]$, whence $|z| \in \{ 2n -1-2i \colon i \in [0, (v-3)/2] \}$. Putting all together we infer that
\[
\mathsf L \big( U(-U) \big) = \{ 2, 2n-2i, 2n+1-2i \colon i \in [0, (v-1)/2] \} \,. \qedhere
\]
\end{proof}

\begin{lemma} \label{5.4}
Let $G = C_3^3$ and let $(e_1,e_2,e_3)$ be a basis of $G$. If $e_0=e_1+e_2+e_3$ and $U = e_1^2e_2^2e_3^2e_0$, then
\[
\mathsf L (U^{3k}) = 3k + 2 \cdot [0, 2k] \quad \text{for every $k \in \N$} \,.
\]
In particular, $\rho \big( \mathsf L (U^{3k}) \big) = 7/3$ for every $k \in \N$.
\end{lemma}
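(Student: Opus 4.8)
The plan is to turn the computation of $\mathsf L(U^{3k})$ into a finite linear bookkeeping problem. First I would record the two structural facts that make this possible. Since $\ord(e_i)=3$ and $e_0=e_1+e_2+e_3$, we have $\sigma(U)=2(e_1+e_2+e_3)+e_0=3(e_1+e_2+e_3)=0$, so $U$ is a zero-sum sequence, and $|U|=7=\mathsf D(C_3^3)$ shows that it is an atom of maximal length; moreover, again using $3e_i=0$,
\[
U^{3k}=e_1^{6k}\,e_2^{6k}\,e_3^{6k}\,e_0^{3k}\,.
\]
In particular, every atom occurring in a factorization of $U^{3k}$ is supported in $G_0:=\{e_1,e_2,e_3,e_0\}$.

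Second, I would classify $\mathcal A(G_0)$. A sequence $e_1^{a_1}e_2^{a_2}e_3^{a_3}e_0^{a_0}$ is zero-sum if and only if $a_1\equiv a_2\equiv a_3\equiv -a_0\pmod 3$. Combining this congruence with $\mathsf D(C_3^3)=7$ and with the observation that $e_i^3$ (respectively $e_0^3$) divides every longer zero-sum sequence having $3\mid a_i$ (respectively $3\mid a_0$), one obtains that $\mathcal A(G_0)$ consists precisely of the six atoms $e_1^3$, $e_2^3$, $e_3^3$, $e_0^3$, $U=e_1^2e_2^2e_3^2e_0$, and $U':=e_1e_2e_3e_0^2$.

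Third, I would parametrize a factorization of $U^{3k}$ by the multiplicities of these six atoms: say $a_i$ copies of $e_i^3$ for $i\in[1,3]$, $d$ copies of $e_0^3$, $x$ copies of $U$, and $y$ copies of $U'$, all non-negative integers. Comparing the $e_1$-, $e_2$-, $e_3$-exponents forces $a_1=a_2=a_3=:a$ with $3a+2x+y=6k$, while comparing the $e_0$-exponent gives $3d+x+2y=3k$. The length of the factorization is $3a+d+x+y$, and eliminating $a$ and $d$ yields $L=7k-2s$ with $s:=(2x+y)/3\in\N_0$ (the congruences guarantee the divisibility by $3$). The conditions $a\ge 0$ and $d\ge 0$ become $s\le 2k$ and $x+2y\le 3k$. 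Conversely, for each $s\in[0,2k]$ one exhibits a non-negative integral pair $(x,y)$ with $2x+y=3s$ and $x+2y\le 3k$ — taking $x$ maximal subject to $2x+y=3s$ and distinguishing the two parities of $s$ — so that every value $7k-2s$ with $s\in[0,2k]$ is attained. This gives $\mathsf L(U^{3k})=\{7k-2s\colon s\in[0,2k]\}=3k+2\cdot[0,2k]$, and hence $\rho\bigl(\mathsf L(U^{3k})\bigr)=7k/(3k)=7/3$.

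The structural heart of the argument is the classification of $\mathcal A(G_0)$, which I would carry out carefully; the only other mildly delicate point is the final feasibility check that $x+2y\le 3k$ can be met for \emph{every} $s$ in the range, which requires a short case distinction on the parity of $s$. Everything else is elementary linear arithmetic.
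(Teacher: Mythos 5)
Your proposal is correct and follows essentially the same route as the paper: both hinge on restricting to $G_0=\{e_0,e_1,e_2,e_3\}$ and determining that $\mathcal A(G_0)$ consists exactly of the six atoms $e_0^3,e_1^3,e_2^3,e_3^3$, $U$, and $e_1e_2e_3e_0^2$. The paper then concludes tersely by noting $\Delta(G_0)=\{2\}$, whereas you make the final step explicit via the linear parametrization of factorizations; your bookkeeping (length $7k-2s$ with $s=(2x+y)/3\in[0,2k]$ and the feasibility check) is accurate.
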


\begin{proof}
We set $G_0 = \{e_0,e_1,e_2,e_3\}$, $W = e_1e_2e_3e_0^2$, and $V_i=e_i^3$ for each  $i \in [0,3]$ Then $\mathcal A (G_0) = \{V_0,V_1,V_2,V_3, U, W \}$ and $\Delta (G_0) = \{2\}$, whence the assertion follows.
\end{proof}

A subset $G_0 \subset G$ is called an {\rm LCN}-set if for every $A = g_1 \cdot \ldots \cdot g_{\ell} \in \mathcal A (G_0)$ the cross number $\mathsf k (A) = \sum_{i=1}^{\ell} \frac{1}{\ord (g_i)} \ge 1$ holds. We set
\[
\mathsf m (G) = \max \{\min \Delta (G_0) \colon G_0 \subset G \ \text{is a non-half-factorial LCN-set} \} \,.
\]
For $d \in \N$, $M \in \N_0$, and $\{0,d\} \subset \mathcal D \subset [0,d]$, let $\mathcal P_M ( \mathcal D, G)$ denote the set of all $B \in \mathcal B (G)$ with $\mathsf L (B)$ is an AAMP with period $\mathcal D$ and bound $M$.

\begin{lemma} \label{5.5}
Let $G = C_4 \oplus C_4$. Then, for every sufficiently large $M$,
\[
\limsup_{B \in \mathcal P_M ( \{0,2 \}, G), \min \mathsf L (B) \to \infty} \rho (\mathsf L (B)) = 2 \,.
\]
\end{lemma}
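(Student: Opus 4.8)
The plan is to prove $\limsup \ge 2$ and $\limsup \le 2$ separately. For the lower bound, fix $g \in G$ with $\ord(g) = 4$ and, for $n \in \N$, set $B_n = g^{4n}(3g)^{4n}$. Since $\mathcal B(\{g, 3g\})$ is a divisor-closed submonoid of $\mathcal B(G)$ and $\langle g \rangle \cong C_4$, one computes $\mathsf L(B_n)$ inside $\mathcal B(C_4)$: the only atoms dividing $B_n$ are $g^4$, $(3g)^4$ and $g(3g)$, the factorizations of $B_n$ are precisely $(g^4)^a((3g)^4)^a(g(3g))^{4(n-a)}$ with $a \in [0, n]$, and so $\mathsf L(B_n) = \{2n, 2n+2, \dots, 4n\}$. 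Thus $\mathsf L(B_n)$ is an AMP with period $\{0, 2\}$ and bound $0$, so $B_n \in \mathcal P_M(\{0,2\}, G)$ for every $M \in \N_0$, while $\min \mathsf L(B_n) = 2n \to \infty$ and $\rho(\mathsf L(B_n)) = 2$. Hence $\limsup \ge 2$. (The hypothesis $G = C_4 \oplus C_4$ is essential here: for $C_3^3$, which also has Davenport constant $7$, Lemma \ref{5.4} exhibits sequences whose set of lengths is an AMP of period $\{0,2\}$ but of elasticity $7/3 > 2$, so the analogous $\limsup$ is at least $7/3$.)

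For the upper bound, let $M_0$ be the bound of Proposition \ref{2.1} for $G = C_4 \oplus C_4$ and take $M \ge M_0$. Let $B \in \mathcal P_M(\{0,2\}, G)$; dividing off all copies of $0$ (which only translates $\mathsf L(B)$) we may assume $0 \nmid B$, so every atom of $B$ has length $\ge 2$. Write $\mathsf L(B) = y + (L' \cup L^* \cup L'')$ as in the definition of an AAMP, with $L^* = 2 \cdot [0, \ell]$, $L' \subseteq [-M, -1]$ and $L'' \subseteq 2\ell + [1, M]$. Then $y - M \le \min \mathsf L(B)$, $\max \mathsf L(B) \le y + 2\ell + M$, and $\min \mathsf L(B) \to \infty$ forces $y \to \infty$; so it suffices to show $\limsup_{y \to \infty} 2\ell/y \le 1$. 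Assume not: there are $\varepsilon > 0$ and a family of such $B$ with $y \to \infty$ and $2\ell \ge (1 + \varepsilon)y$. On the interval $[y, y + 2\ell]$ the set $\mathsf L(B)$ equals $\{y, y+2, \dots, y+2\ell\}$, since $y + L'$ and $y + L''$ lie outside it. From a factorization of $B$ of length $y + 2\ell \in \mathsf L(B)$ we get $|B| \ge 2(y + 2\ell) \ge (4 + 2\varepsilon)y$, whereas a factorization $z$ of $B$ of length $y \in \mathsf L(B)$ satisfies $\sum_{V \mid z}(|V| - 4) = |B| - 4y \ge 2\varepsilon y$; since every atom has length $\le 7$, the factorization $z$ contains at least $\tfrac{2\varepsilon}{3}y$ atoms of length $\ge 5$.

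The goal is now to produce a factorization of $B$ of length $y + 1$, which is absurd: as $\ell \ge 1$, such a length lies in $[y, y+2\ell]$ but has parity different from $y$. Since $C_4 \oplus C_4$ has only finitely many atoms of length $\ge 5$, for $y$ large the factorization $z$ contains at least $k_0$ atoms of length $\ge 5$, for any prescribed constant $k_0$. One then shows -- by running through those finitely many atoms and the possible factorizations of their pairwise products -- that once $k_0$ is large enough this block must contain two atoms $U_1, U_2$ (not necessarily distinct, i.e. possibly the same atom used twice) with $3 \in \mathsf L(U_1 U_2)$; replacing this pair in $z$ by a length-three factorization of $U_1 U_2$ gives the desired factorization of $B$ of length $y + 1$. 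This combinatorial step is the main obstacle, and it is delicate because certain long atoms are "rigid" -- for instance $U = e_1^3 e_2(e_1 - e_2)$ in a basis $(e_1, e_2)$ satisfies $\mathsf L(U^j) = \{j\}$ for all $j$ -- so one cannot always take $U_1 = U_2$. What must be established is that a block of atoms of length $\ge 5$ avoiding every such exchange cannot be arbitrarily long, equivalently that no subset of $C_4 \oplus C_4$ using atoms of length $\ge 5$ carries sets of lengths that are arbitrarily long arithmetical progressions of difference $2$ with elasticity bounded away from $2$. Granting this, $\limsup \le 2$, and together with the lower bound we conclude $\limsup = 2$.
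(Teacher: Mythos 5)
Your lower bound is correct and complete: $B_n = g^{4n}(-g)^{4n}$ has $\mathsf L(B_n) = 2n + 2\cdot[0,n]$, which is an AMP with period $\{0,2\}$ and bound $0$, elasticity $2$, and $\min \mathsf L(B_n) \to \infty$. (The paper does not even spell this direction out, since only the upper bound is used later, in the proof of Corollary \ref{1.2}.)

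The upper bound, however, is where all the content of the lemma lives, and your argument for it has a genuine gap that you yourself flag with ``Granting this''. The reduction is fine up to the point where you have a minimal-length factorization $z$ of $B$ containing at least $\tfrac{2\varepsilon}{3}y$ atoms of length $\ge 5$, and you want to produce a factorization of length $y+1$. But the claim that a sufficiently long block of length-$\ge 5$ atoms must contain a pair $U_1, U_2$ with $3 \in \mathsf L(U_1U_2)$ is precisely the hard combinatorial statement, and it is not established by ``running through the finitely many atoms and their pairwise products'': as your own example $U = e_1^3e_2(e_1-e_2)$ shows, $\supp(U)$ is half-factorial, so arbitrarily many copies of $U$ admit no such exchange, and one must rule out that the long atoms of $z$ organize themselves into half-factorial blocks plus blocks of elasticity exactly $2$ --- which is exactly the conclusion you are trying to prove. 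Moreover, it is not clear a priori that an obstruction to length $y+1$ must be visible in a \emph{pairwise} product; one may need to consider products of many atoms. So the proof as written is circular at its core step.

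The paper closes this gap by quoting two substantial external results rather than by a direct combinatorial analysis. First, using $\max\Delta^*(C_4\oplus C_4) = 2$ and $\mathsf m(C_4\oplus C_4) = 1$, it invokes \cite[Proposition 8.7]{Sc09b} to bound the $\limsup$ by $\max\{\rho(G_0) \colon G_0 \subset G,\ 2 \mid \min\Delta(G_0)\}$; this is the passage from ``$\mathsf L(B)$ is an AAMP with period $\{0,2\}$ and large minimum'' to a statement about the support, which your argument attempts to redo by hand. Second, since $\max\Delta(C_4\oplus C_4) = 3$ forces $\min\Delta(G_0) = 2 = \max\Delta^*(G)$ for any such $G_0$, \cite[Theorem 7.7]{Sc09b} gives a structural decomposition $G_0 = \bigcup_{i=1}^s G_i$ into half-factorial sets and sets of the form $\{g_i,-g_i\}$ with $\ord(g_i)=4$, whence $\rho(G_0) = 2$. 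If you want a self-contained proof along your lines, you would essentially have to reprove (a special case of) these two results; as it stands, the decisive step is asserted, not proved.
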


\begin{proof}
We have $\max \Delta^* (G) =2$ by Proposition \ref{2.2}.2 and $\mathsf m (G) = 1$ by \cite[Proposition 3.6]{Sc09c}.  For a sufficiently large $M$, we consider $\mathcal P = \mathcal P_M ( \{0,2\}, G)$  and \cite[Proposition 8.7]{Sc09b} implies that
\[
\limsup_{B \in \mathcal P, \min \mathsf L (B) \to \infty} \rho (\mathsf L (B)) \le \max \{ \rho (G_0) \colon G_0 \subset G, 2 \mid \min \Delta (G_0) \} \,.
\]
Let $G_0 \subset G$ with $2 \mid \min \Delta (G_0)$. Since $\max \Delta (G)=3$ by \cite[Lemma 3.3]{Ge-Zh15b}, it follows that $\min \Delta (G_0)=2$, whence $\min \Delta (G_0)= \max \Delta^* (G)$. Now \cite[Theorem 7.7]{Sc09b} implies that $G_0 = \cup_{i=1}^s G_i$, where $\langle G_0 \rangle = \sum_{i=1}^s \langle G_i \rangle$ and each $G_i$ is either half-factorial or  equal to $\{g_i, -g_i \}$ for some $g_i$ with $\ord (g_i)=4$. Thus $\rho (G_0)= 2$.
\end{proof}

\begin{proof}[Proof of Theorem \ref{1.1}.3]
Let $m \ge 5$. We have to show that $\mathcal L (C_2^{m-4} \oplus C_4)$ is a maximal element in $\Omega_m$.  Let $G$ be a finite abelian group with $\mathsf D (G) = m$ and suppose that $\mathcal L (C_2^r \oplus C_4) \subset \mathcal L (G)$, where  $r = m-4$. We distinguish two cases and  use Proposition \ref{2.3} without further mention.

\smallskip
\noindent
CASE 1: \, $m \in [5,6]$.

First suppose that $m=5$. Then $r=1$ and $G$ is isomorphic to one of the following groups:
\[
C_3 \oplus C_3, \ C_5, \ C_2 \oplus C_4, \ C_2^4 \,.
\]
Since $\mathcal L (C_m)$ is minimal in $\Omega_m$ by Theorem \ref{1.1}.1, it follows that $G$ is not cyclic.
Since, by Proposition \ref{2.2}, $\max \Delta^* (C_3 \oplus C_3) = 1 < 2 = \max \Delta^* (C_2 \oplus C_4)$, it follows that $G$ is not isomorphic to $C_3 \oplus C_3$. Theorems 4.5 and 4.8 in \cite{Ge-Sc-Zh17b} show that $G$ is not isomorphic to $C_2^4$.

Next suppose that $m=6$. Then $r=2$ and $G$ is isomorphic to one of the following groups:
\[
C_6, \ C_2^2 \oplus C_4, \ C_2^5 \,.
\]
Since $G$ is not cyclic, it remains to show that $\mathcal L (C_2^2 \oplus C_4) \not\subset \mathcal L (C_2^5)$. Lemma \ref{5.1}.1 implies that $\{2, \mathsf D (G)-1, \mathsf D (G)\} \in \mathcal L (C_2^2 \oplus C_4)$. Since $\{2, \mathsf D (G)-1, \mathsf D (G)\} \not\in \mathcal L (C_2^5)$ by Lemma \ref{4.1}.1, it follows that $\mathcal L (C_2^2 \oplus C_4) \not\subset \mathcal L (C_2^5)$.

\smallskip
\noindent
CASE 2: \, $m \ge 7$.

Then Theorem \ref{1.1}.2 implies that $G$ is neither cyclic nor an elementary $2$-group. Thus Lemma \ref{4.1}.2 implies that $\daleth (G) < \mathsf D (G)$. Since $\mathsf D (G)-1 = \daleth (C_2^r \oplus C_4) \le \daleth (G)$ by Lemma \ref{5.1}.2, it follows that $\daleth (G)=\mathsf D (G)-1$. It is again Lemma \ref{5.1}.2 that  implies that $G$ is either isomorphic to $C_2 \oplus C_{2n}$ or isomorphic to $C_2^s \oplus C_4$, where $n \in \N$ with $m=\mathsf D (G)=2n+1$ and $s \in \N$ with $m=\mathsf D (G)=s+4$.

Thus it remains to consider the case where $m$ is odd and to prove the following  assertion.

\begin{enumerate}
\item[{\bf A.}\,]  $\mathcal L (C_2^r \oplus C_4) \not\subset \mathcal L (C_2 \oplus C_{2n})$, with $n= (m-1)/2$.
\end{enumerate}

\smallskip

{\it Proof of \,{\bf A}}.\, Assume to the contrary that $\mathcal L (C_2^r \oplus C_4) \subset \mathcal L (C_2 \oplus C_{2n})$. Then $\rho_3 (C_2^r \oplus C_4) \le \rho_3 (C_2 \oplus C_{2n})$.
Since $n \ge 3$, \cite[Theorem 5.1]{Ge-Gr-Yu15} implies that $\rho_3 (C_2 \oplus C_{2n}) < \mathsf D (G) + \lfloor \mathsf D (G)/2 \rfloor$. We claim that $\rho_3 (C_2^r \oplus C_4) = \mathsf D (G) + \lfloor \mathsf D (G)/2 \rfloor$, which yields a contradiction and ends the proof.
Since $2n+1=m=r+4 \ge 7$, there is $s \in \N_0$ such that $r \in \{2s+2, 2s+3\}$. We distinguish two cases.

Suppose that $r=2s+2$. We set $G = C_2^r \oplus C_4$, $G_1=C_2^{s+2}$, and $G_2=C_2^s \oplus C_4$. Then $\mathsf d (G) = \mathsf d (G_1)+ \mathsf d (G_2)$ and $\mathsf d (G_2)= s+3=\mathsf d (G_1)+1$. Thus \cite[Theorem 6.3.4.1]{Ge-HK06a} implies that $\rho_3 (G) = \mathsf D (G) + \lfloor \mathsf D (G)/2 \rfloor$.

Suppose that $r=2s+3$. We set $G = C_2^r \oplus C_4$, $G_1=C_2^{s+3}$, and $G_2=C_2^s \oplus C_4$. Then $\mathsf d (G) = \mathsf d (G_1)+ \mathsf d (G_2)$ and $\mathsf d (G_2)= s+3=\mathsf d (G_1)$. Thus \cite[Theorem 6.3.4.1]{Ge-HK06a} implies that $\rho_3 (G) = \mathsf D (G) + \lfloor \mathsf D (G)/2 \rfloor$.
\end{proof}

\begin{proof}[Proof of Theorem \ref{1.1}.4: First part]
Let $n \ge 2$.  We show that $\mathcal L ( C_2 \oplus C_{2n})$ is a maximal element of $\Omega_{2n+1}$.
Let $G$ be a finite abelian group with $\mathsf D (G) = 2n+1$ and suppose that $\mathcal L (C_2 \oplus C_{2n}) \subset \mathcal L (G)$.
Then Lemma \ref{5.1}.1 implies that $\{2, \mathsf D (G)-1, \mathsf D (G)\} \in \mathcal L (C_2 \oplus C_{2n}) \subset \mathcal L (G)$. Now, again Lemma \ref{5.1}.1 implies that $G \cong C_2 \oplus C_{2n}$.
\end{proof}

Let $G = C_{n_1 } \oplus \ldots \oplus C_{n_r}$, with $1 < n_1 \mid \ldots \mid n_r$ and $r \ge 3$, be a finite abelian group. If $n_{r-1} \ge 3$, then \cite[Theorem 4.2]{B-G-G-P13a} implies that
\[
[2, \mathsf D^* (G)] \ \subset \ \bigcup_{L \in \mathcal L (G), \{2, \mathsf D^* (G)\} \subset L} \ L \,.
\]
The next lemma, which is needed in the proof of Corollary \ref{1.2}, shows that the above result does not hold without the assumption that $n_{r-1} \ge 3$.

\begin{lemma} \label{5.6}
Let $G = C_2^3 \oplus C_4$ and $U \in \mathcal A (G)$ with $|U|=\mathsf D (G)=7$. Then $3 \notin \mathsf L \big( U(-U) \big)$.
\end{lemma}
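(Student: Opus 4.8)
The plan is to argue by contradiction: suppose $U(-U) = W_1 W_2 W_3$ with $W_1, W_2, W_3 \in \mathcal A(G)$, and fix a decomposition $G = C_2^3 \oplus \langle e \rangle$ with $\ord(e) = 4$. First I would set up a bookkeeping decomposition of this factorization. Splitting the terms of $U(-U)$ according to whether they originate from $U$ or from $-U$ and then distributing them to $W_1, W_2, W_3$, one writes $W_j = T_j (-S_j)$ with $T_j, S_j \in \mathcal F(G)$, $T_j \mid U$, $S_j \mid U$, and $T_1 T_2 T_3 = U = S_1 S_2 S_3$. From $\sigma (W_j) = 0$ one gets $\sigma (T_j) = \sigma (S_j) =: h_j$ and $h_1 + h_2 + h_3 = \sigma(U) = 0$. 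Since $U$ is a minimal zero-sum sequence with $0 \nmid U$, a short argument shows every $T_j$ and $S_j$ is a nonempty proper subsequence of $U$; hence all $h_j \ne 0$, every $|W_j| \ge 2$, and no $W_j$ equals $U$ or $-U$. Together with $|W_j| \le \mathsf D(G) = 7$, this leaves only the size patterns $(|W_1|,|W_2|,|W_3|) \in \{(2,5,7),(3,4,7),(2,6,6),(3,5,6),(4,4,6),(4,5,5)\}$ up to order.

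Next I would feed in two structural facts. First, the subsequence of terms of $U$ of order at most $2$ is zero-sum free inside a copy of $C_2^4$, hence has at most $\mathsf d (C_2^4) = 4$ terms; pushing $U$ through the homomorphism $G \to C_2$ that reduces the $\langle e \rangle$-component modulo $2$ shows moreover that the number of order-$4$ terms of $U$ is even, so $U$ has exactly $4$ or exactly $6$ terms of order $4$. Second, if $|W_j| \ge 3$ then the minimal zero-sum sequence $W_j$ contains neither $y^2$ with $\ord (y) = 2$ nor $x(-x)$ with $\ord (x) = 4$; since $U$, being minimal, contains at most one of $\{x,-x\}$ for every order-$4$ element, this forces $\gcd (T_j, S_j) = 1$. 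Consequently $T_j S_j \mid U$, and whenever all three factors have length $\ge 3$ the two decompositions $T_1T_2T_3 = U = S_1S_2S_3$ must be severely compatible; in particular, after peeling off a length-$2$ block $W_1 = h_1(-h_1)$ (so $T_1 = S_1 = (h_1)$) in the patterns $(2,5,7)$ and $(2,6,6)$, one is reduced to $W_2 W_3 = V(-V)$ with $V := U/h_1$ zero-sum free of length $6 = \mathsf d(G)$, and then (again by $\gcd (T_j, S_j)=1$) to $W_3 = -W_2$, $W_2 = V_2(-V_3)$ with $V_2 V_3 = V$, $\sigma (V_2) = \sigma (V_3)$ and each term of $V$ wholly inside $V_2$ or $V_3$; in particular $2\sigma (V_2) = \sigma (V) = -h_1$. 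Analogous (more involved) divisibility and sum constraints pin down the patterns with all $|W_j| \ge 3$, yielding relations of the shape $2h_j = \sigma(\text{a subsequence of }U)$. Since doubling kills the $C_2^3$-component, $2G = \{0, 2e\}$, so any such identity is impossible as soon as the element being doubled has nonzero $C_2^3$-part, which cuts the surviving configurations down dramatically (for instance it forces $h_1 = 2e$ in the length-$2$-block patterns).

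Finally I would finish the case analysis. All configurations in which a nonzero $C_2^3$-component is forced to be a double are eliminated at once; what survives are the order-$4$-heavy cases, essentially those in which $U$ (or $V$) is built almost entirely out of order-$4$ terms, so that the relevant sum lies in $2G$ and the shortcut fails. This is where I expect the real work to lie: one must show that $6$ order-$4$ terms together with their negatives cannot be partitioned into two minimal zero-sum sequences $V_2(-V_3)$ and $V_3(-V_2)$ with $\sigma (V_2) = \sigma (V_3)$ and term-integrity, by running through the admissible split sizes $(|V_2|,|V_3|) \in \{(1,5),(3,3)\}$ (both parts odd, by a parity count on the $\langle e \rangle$-components) and invoking the precise shape of a length-$6$ zero-sum free sequence consisting of order-$4$ terms over $C_2^3 \oplus C_4$ — in particular that each such term lies in the subsum set of the remaining ones, or conversely fails to — to contradict minimality of the putative $W_2$ in every case. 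Making this order-$4$ sub-case airtight, possibly with a small inverse-Davenport-type classification of these length-$\mathsf D(G)$ atoms, is the main obstacle; the rest of the argument is routine once the above reductions are in place.
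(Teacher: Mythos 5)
Your setup is sound and overlaps with the paper's: the parity observation (every atom contains an even number of order-$4$ elements, so $U$ has exactly $4$ or $6$ of them), the bound $\mathsf d(C_2^4)=4$ on the order-$2$ part, the decomposition $W_j=T_j(-S_j)$ with $T_jT_2T_3=U=S_1S_2S_3$, and the observation that $|W_j|\ge 3$ forces $\gcd(T_j,S_j)=1$ (which, incidentally, immediately kills the patterns with $|W_3|=7$: then $T_3S_3=U$ and $W_1W_2=-W_3$ would be an atom -- you should make that explicit instead of carrying $(2,5,7)$ and $(3,4,7)$ forward and later asserting $W_3=-W_2$ for $(2,5,7)$, which is impossible on length grounds). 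But the proposal is a plan, not a proof: you yourself flag that "making this order-$4$ sub-case airtight \dots is the main obstacle," and that obstacle is exactly where the paper's proof lives. In the paper, the case of six order-$4$ terms occupies assertions {\bf A1}--{\bf A3} (the order-$2$ term of $U$ is not $2e_4$, it can be taken into a basis, and no $W_i$ has length $2$) plus two further sub-cases tracking which $g_i$ and $-g_j$ land in which atom; none of this is routine, and none of it is supplied by your reductions.

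Two further points where the sketch would not survive being written out. First, your "doubling" shortcut is misstated: $2h$ lies in $2G=\{0,2e\}$ for \emph{every} $h\in G$, so an identity $2h_j=\sigma(T)$ constrains the subsequence sum $\sigma(T)$, not the "$C_2^3$-part of the element being doubled"; what it actually yields in the length-$2$-block patterns is $h_1=2e$, i.e.\ $2e\mid U$, and ruling \emph{that} out is precisely the content of the paper's {\bf A1} (a genuine argument via the projection to $C_2^3$). Second, the case where $U$ has three order-$2$ terms and four order-$4$ terms is not "eliminated at once" by any of your shortcuts; the paper handles it by a separate device (pairing the order-$4$ terms to produce a minimal zero-sum sequence of length $5=\mathsf D(C_2^4)$ over an elementary $2$-group of rank $4$, extracting a basis, and showing some $W_i$ would be forced to have length $7$). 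Until both of these case analyses are actually carried out, the lemma is not proved.
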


\begin{proof}
We start with the following simple observations.
\begin{itemize}
\item The sum of any two elements of $G$ of order four has order two.

\item If $W \in \mathcal A (G)$, the number of elements of order four in $W$ (counted with multiplicity) is even.

\item Since $\mathsf D (C_2^4)=5$, $U$ cannot have five elements (counted with multiplicity) of order two.
\end{itemize}
Thus, the number of elements of order four in $U$ is equal to four or six. We set $U = g_1 \cdot \ldots \cdot g_7$ with $\ord (g_1) \le \ldots \le \ord (g_7)$. Suppose for a contradiction that there are $W_1, W_2, W_3 \in \mathcal A (G)$ with $|W_1| \le |W_2| \le |W_3|$ such that
\[
U(-U) = W_1W_2W_3 \,.
\]
Assume to the contrary that $|W_3|=7$. This would mean that $W_3$ arises from $U$ by replacing some of the elements from $U$ by their inverses. Thus there is a subsequence $T$ of $U$ such that $W_3 = U T^{-1} (-T)$. But this implies that $W_1W_2 = (-U)(-T)^{-1}T = (-W_3)$ is an atom, a contradiction. Thus $|W_1|, |W_2|, |W_3| \in [2,6]$.
We distinguish two cases.

\smallskip
\noindent
CASE 1: \, $\ord (g_3)=2$ and $\ord (g_4) = 4$.

Then $U' = g_1g_2g_3(g_4+g_5)(g_6+g_7)$ is a minimal zero-sum sequence over an elementary $2$-group of rank four of length $5= \mathsf D (C_2^4)$. Thus there is a basis $(e_1, \ldots, e_4)$ of $G$ with $\ord (e_1) = \ord (e_2)=\ord (e_3)=2$ and $\ord (e_4)=4$ such that
$g_1=e_1, g_2=e_2, g_3 = e_3$, and $g_i = f_i + a_i e_i$ with $f_i \in \langle e_1, e_2, e_3 \rangle$ and $a_i \in \{1,3\}$ for all $i \in [1,4]$. This implies that $f_1+f_2+f_3+f_4=e_1+e_2+e_3$. Since $U$ is a minimal zero-sum sequence, it follows that $a_1= \ldots = a_4$. Without restriction we may suppose that $a_i=1$ for all $i \in [1,4]$.

Since the number of elements of order four in each atom $W_i$ is even, there is an $i \in [1,3]$ such that $W_i$ has four elements of order four, whence $(f_1 \pm e_4)(f_2 \pm e_4)(f_3 \pm e_4)(f_4 \pm e_4)$ is a subsequence of $W_1$. The only way to extend this to a zero-sum sequence is to use the elements $e_1, e_2$ and $e_3$, whence $W_i = e_1e_2e_3 (f_1 \pm e_4)(f_2 \pm e_4)(f_3 \pm e_4)(f_4 \pm e_4)$, a contradiction to $|W_i| \in [2,6]$.

\smallskip
\noindent
CASE 2: \, $\ord (g_1)=2$ and $\ord (g_2) = 4$.

We choose a basis $(e_1, \ldots, e_4)$ of $G$ with $\ord (e_1)=\ord (e_2)=\ord (e_3)=2$ and $\ord (e_4)=4$. Then $g_i = f_i + a_ie_4$ with $a_i \in \{1,3\}$ and $f_i \in \langle e_1, e_2, e_3 \rangle$ for all $i \in [2,7]$. We continue with three assertions.

\begin{enumerate}
\item[{\bf A1.}\,] $g_1 \ne 2e_4$.

\item[{\bf A2.}\,] Without restriction we may suppose that $g_1 = e_1$.

\item[{\bf A3.}\,] $|W_1|>2$.
\end{enumerate}

\smallskip
{\it Proof of \,{\bf A1}}.\, Assume to the contrary that $g_1=2e_4$. Consider the sequence $S = f_2 \cdot \ldots \cdot f_7$. If $f_2=f_3$, then $g_1g_2g_3$ is a proper zero-sum subsequence of $U$, a contradiction. Thus all elements of $S$ are pairwise distinct, whence $S$ has no zero-sum subsequence of length two.   Further,  $S$ is a zero-sum sequence over a group isomorphic to $C_2^3$. If one of the $f_i$s is equal to zero, then $f_i^{-1}S$ is still a zero-sum sequence, which is not minimal. Thus $f_i^{-1}S$ is a product of a zero-sum sequence of length two and of length three, a contradiction. Thus $S$ is a product of two minimal zero-sum sequences $S_1$ and $S_2$, and both have  length three. After renumbering if necessary, we may suppose that $S_1 = f_2f_3f_4$ and $S_2 = f_5f_6f_7$.
Since all elements of $S$ are pairwise distinct, none of the $f_i$s is equal to zero.
Since $f_4=f_2+f_3$, $f_7=f_5+f_6$ and $f_2, \ldots, f_7 \in \langle e_1, e_2, e_3 \rangle $, it follows that not all these six elements can be pairwise distinct, a contradiction.

{\it Proof of \,{\bf A2}}.\, By {\bf A1}, we have $g_1 = f$ or $g_1=f+2e_4$ with $0 \ne f \in \langle e_1, e_2, e_3 \rangle$. After renumbering if necessary, we may suppose that $f = e_1 + f'$ with $f' \in \langle e_2, e_3 \rangle$. The map $f \colon G \to G$, defined by $(e_1, e_2, e_3, e_4) \mapsto (g_1, e_2, e_3, e_4)$, is a group isomorphism. Thus there exists a basis of $G$ containing the element of $S$ having order two.

{\it Proof of \,{\bf A3}}.\, Assume to the contrary that $|W_1|=2$. Then $W_3 = -W_2$. If $\supp (W_1)$ consists of two elements of order four, then $W_2$ consists of one element of order two and five elements of order four, a contradiction to $W_2$ being a zero-sum sequence. Thus $W_1=g_1^2$. Thus $W_2$ arises from $g_1^{-1}U$ by exchanging $f_i+a_ie_4$ by $f_i-a_ie_4$ for some $i \in [2,7]$. Thus the sum of the first three coordinates of $g_1^{-1}U$ equals the  sum of the first three coordinates of $W_2$ and this is zero. Since $U$ is a zero-sum sequence and $\ord (g_1)=2$, it follows that $g_1=2e_4$, a contradiction to {\bf A1}.

By {\bf A1}, {\bf A2}, and {\bf A3},  it remains to handle the following two cases.

\smallskip
\noindent
CASE 2.1: \, $W_1 = e_1W_1'$, $W_2=e_1W_2'$, with $|W_1'|=2$ and $|W_2'|=4$, and $W_3$ consists of six elements of order four.

Since $U(-U) = W_1W_2W_3 = W_3 \Big(e_1^2 \Big) (-W_3)$, $U(-U)$ has a factorization of length three, where one atom has length two, a contradiction to {\bf A3}.

\smallskip
\noindent
CASE 2.2: \, $|W_1|=4$ and $W_i = e_1W_i'$ with $|W_i'|=4$ for $i \in [2,3]$.

Thus $W_1 \mid g_2 \cdot \ldots \cdot g_7 (-g_1) \cdot \ldots \cdot (-g_7)$. After renumbering if necessary, we infer that either
\[
W_1 = g_2g_3(-g_4)(-g_5) \quad \text{or} \quad W_1 = g_2g_3g_4(-g_5) \,.
\]
If $W_1 = g_2g_3(-g_4)(-g_5)$, then $g_2+g_3=g_4+g_5$. Since $\ord (g_2+g_3)=2$, $g_2g_3g_4g_5$ is a zero-sum subsequence of $U$, a contradiction.

Suppose that $W_1 = g_2g_3g_4(-g_5)$. After renumbering if necessary, we may assume that $\gcd (U, W_2') = g_5g_6$ and $\gcd (U, W_3')= g_7$. Thus there are $i, j \in [2,7] \setminus \{5,6\}$ such  that
\[
W_2' = (-g_i)(-g_j)g_5g_6  \,.
\]
Then $0 = \sigma (W_2) = e_1-g_i-g_j+g_5+g_6$ and thus $g_5+g_6+e_1=g_i+g_j$. Since $\ord (g_i+g_j)=2$, it follows that $e_1g_5g_6g_ig_j$ is a zero-sum subsequence of $U$, a contradiction.
\end{proof}

\begin{proof}[Proof of Corollary \ref{1.2}]
Let $G_1$ and $G_2$ be non-isomorphic finite abelian groups with $\mathsf D (G_1)=\mathsf D (G_2)=m \in [4,7]$. We need the following results.  Theorem \ref{1.1}.1 implies that for $m \in [4,6]$, $\mathcal L (C_m)$ is minimal in $\Omega_m$, $\mathcal L (C_2^{m-1})$ is maximal in $\Omega_m$, and $\mathcal L (C_m) \subsetneq \mathcal L (C_2^{m-1})$. By Theorem \ref{1.1}.2, $\mathcal L (C_7)$ and $\mathcal L (C_2^6)$ are each incomparable in $\Omega_m$. Moreover, if $G$ is an abelian group with $\mathsf D (G)=m$ and $\mathcal L (G) \subset \mathcal L (C_2^{m-1})$ for $m \ge 4$, then $G$ is either cyclic or an elementary $2$-group (\cite[Theorem 3.5]{Ge-Sc-Zh17b}).
We use Proposition \ref{2.3} without further mention.

\smallskip
\noindent
CASE 1: \, $m = 4$.

Since the only groups with Davenport constant four are the cyclic group of order four and the elementary $2$-group of rank three, the claim follows immediately from the above mentioned results.

\smallskip
\noindent
CASE 2: \, $m = 5$.

Every finite abelian group $G$ with $\mathsf D (G)=5$ is isomorphic to one of the following groups:
\[
C_5, \ C_2 \oplus C_4, \ C_3 \oplus C_3, \  C_2^4 \,.
\]
Note that $\mathcal L (C_5) \subsetneq \mathcal L (C_2^4)$,  that $\mathcal L (C_5)$ is a minimal element in $\Omega_5$, that $\mathcal L (C_2^4)$ is a maximal element in $\Omega_5$, and that the only group $G$ with $\mathsf D (G)=5$ and $\mathcal L (G) \subset \mathcal L (C_2^4)$ is cyclic of order five.

Since $\max \Delta (C_5)=3$ by Lemma \ref{4.1}.2, $\max \Delta (C_3 \oplus C_3) = 1$ by \cite[Corollary 6.4.9]{Ge-HK06a}, and $\max \Delta (C_2 \oplus C_4)=2$ by Lemma \ref{5.1}.2, it follows that $\mathcal L (C_5) \not\subset \mathcal L (C_2 \oplus C_4) \not\subset \mathcal L (C_3 \oplus C_3)$ and $\mathcal L (C_5) \not\subset \mathcal L (C_3 \oplus C_3)$. Theorems 4.1 and 4.5 in  \cite{Ge-Sc-Zh17b} show that $[2,5] \in \mathcal L (C_3 \oplus C_3) \setminus \mathcal L (C_2 \oplus C_4)$. Thus the claim follows.

\smallskip
\noindent
CASE 3: \, $m = 6$.

Every finite abelian group $G$ with $\mathsf D (G)=6$ is isomorphic to one of the following groups:
\[
C_6, \ C_2^2 \oplus C_4, \ C_2^5 \,.
\]
Again, we note that $\mathcal L (C_6) \subsetneq \mathcal L (C_2^5)$.
Since  $\mathcal L (C_6)$ is  minimal  in $\Omega_6$, and $\mathcal L (C_2^5)$ and $\mathcal L (C_2^2 \oplus C_4)$ are both maximal in $\Omega_6$,
it remains to show that $\mathcal L (C_6) \not\subset \mathcal L (C_2^2 \oplus C_4)$. Since
$\max \Delta^* (C_6)=4$ and $\max \Delta^* (C_2^2 \oplus C_4)=2$ by Proposition \ref{2.2}, it follows that $\mathcal L (C_6) \not\subset \mathcal L (C_2^2 \oplus C_4)$.

\smallskip
\noindent
CASE 4: \, $m = 7$.

Every finite abelian group $G$ with Davenport constant $\mathsf D (G)=7$ is isomorphic to one of the following groups:
\[
C_7, \ C_2 \oplus C_6, \ C_4 \oplus C_4, \ C_2^3 \oplus C_4, \  C_3^3, \ C_2^6 \,.
\]
By Theorem \ref{1.1}.2, $\mathcal L (C_7)$ and $\mathcal L (C_2^6)$ are incomparable in $\Omega_7$. Since all groups $G$ in the above list satisfy $\mathsf D (G)=\mathsf D^* (G)=7$, Theorem  \ref{1.1}.4 implies that $\mathcal L (C_2 \oplus C_6)$ is incomparable in $\Omega_7$. Next we show that $\mathcal L ( C_2^3 \oplus C_4)$ is incomparable in $\Omega_7$. By Theorem \ref{1.1}.3, it is maximal in $\Omega_7$. Thus we have to verify that
\[
\mathcal L (C_3^3) \not\subset \mathcal L ( C_2^3 \oplus C_4) \quad \text{and} \quad \mathcal L (C_4 \oplus C_4) \not\subset \mathcal L ( C_2^3 \oplus C_4) \,.
\]
If $(e_1,e_2,e_3)$ is a basis of $C_3^3$ and $U = e_1^2e_2^2e_3^2(e_1+e_2+e_3)$, then $\mathsf L \big( U (-U) \big) = \{2,3,4,5,7\}$ and  Lemma \ref{5.6} shows  that $\{2,3,4,5,7\} \notin \mathcal L (C_2^3 \oplus C_4)$.

Let $(e_1, e_2)$ be a basis of $C_4 \oplus C_4$ with $\ord (e_1)=\ord (e_2)=4$ and let $U = e_2^3e_1 (e_1+e_2)(e_1+2e_2)^2$. Then
\[
\begin{aligned}
U(-U) & = \Big( e_2^3 (e_1+2e_2)(-e_1-e_2) \Big) \Big( (-e_2)^2(e_1+2e_2)(-e_1) \Big) \big( (-e_2)e_1(e_1+e_2)(-e_1+2e_2)^2 \Big) \\
      & = \Big( (-e_2)e_1(e_1+e_2)(e_1+2e_2)^2 \Big) \Big( e_2(-e_1)(-e_1-e_2)(-e_1+2e_2) \Big) \Big( (e_2(-e_2) \Big)^2 \\
      & = \Big( e_2^2(e_1+2e_2)(-e_1) \Big) \Big( (-e_2)^2(-e_1+2e_2)e_1 \Big) \Big( e_2(-e_2) \Big) \Big( (e_1+e_2)(-e_1-e_2) \Big) \Big( (e_1+2e_2)(-e_1+2e_2) \Big) \\
      & = \Big( e_2(e_1+e_2)(-e_1+2e_2) \Big)   \Big( (-e_2)(-e_1-e_2)(e_1+2e_2) \Big) \Big( e_2(-e_2) \Big)^2 \Big( (e_1+2e_2)(-e_1+2e_2) \Big) \Big( e_1(-e_1) \Big)   \,,
\end{aligned}
\]
whence $\mathsf L \big( U(-U) \big) = [2,7]$. By Lemma \ref{5.6}, we infer that $[2,7] \notin \mathcal L (C_2^3 \oplus C_4)$.

Finally, it remains to show that $\mathcal L (C_4 \oplus C_4)$ and $\mathcal L (C_3^3)$ are incomparable. Since $2 + \max \Delta (C_3^3)  
= 4$ by \cite[Proposition 5.1]{Ge-Gr-Sc11a} and $2 + \max \Delta (C_4 \oplus C_4) = 5$ by \cite[Lemma 3.3]{Ge-Zh15b}, it follows that $\mathcal L (C_4 \oplus C_4) \not\subset \mathcal L (C_3^3)$.
Lemmas \ref{5.4} and \ref{5.5} imply  that $\mathcal L (C_3^3) \not\subset \mathcal L (C_4 \oplus C_4)$.
\end{proof}

\begin{proof}[Proof of Theorem \ref{1.1}.4: Second part]
It remains to show the moreover statement. Let $n \ge 2$ and let $G$ be a finite abelian group with $\mathsf D^* (G) = \mathsf D (G) = 2n+1$ such  that $\mathcal L (G) \subset \mathcal L (C_2 \oplus C_{2n})$. We assert that $G \cong C_2 \oplus C_{2n}$.

If $n = 2$, then $\mathsf D (G)=5$ and the claim follows from Corollary \ref{1.2}. Suppose that $n \ge 3$.
We set $G\cong C_{n_1}\oplus \ldots\oplus C_{n_r}$ with $1<n_1 \t\ldots\t n_r$ and we choose a basis $(e_1, \ldots, e_r)$  of $G$ with $\ord(e_i)=n_i$ for $i \in [1,r]$. By Theorem \ref{1.1}.2, $G$ is neither cyclic nor an elementary $2$-group, whence $r \ge 2$ and $n_r \ge 3$.  We consider the atom $U=e_1^{n_1-1} \cdot \ldots \cdot e_r^{n_r-1}(e_1+\ldots+e_r) \in \mathcal A (G)$ and observe that  $\{2, \mathsf D(G)\} \subset \mathsf L(U(-U)) \in \mathcal L (G) \subset \mathcal L (C_2 \oplus C_{2n})$.
	
If $r\ge 3$, then $U(-U)$ has no minimal zero-sum subsequence of length $3$, which implies that $2n=\mathsf D(G)-1\not\in \mathsf L(U(-U))$, a contradiction to Proposition \ref{5.3}. Thus $r=2$ and we distinguish several cases.

By \cite[Lemma 6.6.4]{Ge-HK06a}, $\mathsf L(U(-U)) = \{2, n_1, n_2, n_1+n_2-2, n_1+n_2-1\} \in \mathcal L (C_{n_1} \oplus C_{n_2})$. Thus, if $n_1$ is odd, then the second largest number of this set is odd, a contradiction to Proposition \ref{5.3}.
	
If  $n_1=2$, then $\mathsf D (G) = n_1+n_2-1=2n+1$ implies that $n_2=2n$, whence $G\cong C_2\oplus C_{2n}$.

If  $n_1\ge 4$  is even and $n_1=n_2$, then  $\rho_3(G)=\mathsf D(G)+\lfloor \frac{\mathsf D(G)}{2}\rfloor>\rho_3(C_2\oplus C_{2n})$ (the first equation follows from \cite[Theorem 6.3.4]{Ge-HK06a} and the inequality follows from \cite[Theorem 5.1]{Ge-Gr-Yu15}), a contradiction to $\mathcal L(G)\subset \mathcal L(C_2 \oplus C_{2n})$.

Suppose that $n_1=4$. We consider the atom $V = e_2^{4n-1}e_1 (e_1+e_2)(e_1-2e_2)(e_1+2e_2)$. Then
\[
V(-V) = \Big( e_2^{4n-1}(e_1+2e_2)(-e_1-e_2) \Big) \Big( (-e_2)^{4n-2}(e_1-2e_2)(-e_1) \Big) \Big( e_1(-e_2)(e_1+e_2)(-e_1+2e_2)(-e_1-2e_2) \Big),
\]
whence $\{2,3,\mathsf D (G)\} \subset \mathsf L \big( V(-V) \big)$, a contradiction to Proposition \ref{5.3}.

Suppose that $n_1 \ge 6$ is even and $n_1 \ne n_2$. 	We set $m= n_2/2$ and consider the atom
\[
V = e_2^{n_2-1} e_1^{n_1-3}(e_1 + m e_2)^2 (e_1+e_2)
\]
and assert that
\[
\mathsf L \big( V(-V) \big) = \{2, n_1+m-2, n_1+m-1,n_1+m,n_2, n_1+n_2-3,n_1+n_2-2,n_1+n_2-1\} \,.
\]
Clearly, $\{2, \mathsf D (G)\} \subset \mathsf L \big( V(-V) \big)$. If $W = (e_1+e_2)(-e_1)(-e_2)$, then $W(-W)$ divides $V(-V)$ and gives rise to a factorization of length $\mathsf D (G)-1$. Now we consider a factorization $z \in \mathsf Z \big( V(-V) \big)$ with $|z| \notin \{2, \mathsf D (G)-1, \mathsf D (G)\}$. Then there is an atom $W \in \mathcal A (G)$ with $W$ divides $z$, $|W| \ge 3$, and with $(e_1+me_2) \mid W$.

If $W = (-e_2)e_1^{n_1-3}(e_1 + m e_2)^2 (e_1+e_2)$, then $W(-W)$ divides $V(-V)$ and gives rise to a factorization of length $n_2$. If $W = (e_1 + m e_2)^2 (-e_1)^2$, then
\[
V(-V) = W(-W) \big( (e_1+e_2)(-e_1-e_2) \big) \big( e_1 (-e_1) \big)^{n_1-5} \big( e_2(-e_2) \big)^{n_2-1}
\]
is a factorization of length $n_1+n_2-3$. If $W = (e_1+me_2)(-e_1-e_2)(-e_2)^{m-1}$, then
\[
V(-V) = W(-W) \big( (e_1+me_2)(-e_1+me_2) \big) \big( e_2(-e_2) \big)^{m} \big( e_1(-e_1) \big)^{n_1-3}
\]
is a factorization of length $n_1+m$. If $W = (e_1+me_2)(-e_1)e_2^m$, then
\[
V(-V) = W(-W) \big( (e_1+me_2)(-e_1+me_2) \big) \big( (e_1+e_2)(-e_1-e_2) \big) \big( e_1(-e_1) \big)^{n_1-4} \big( e_2(-e_2) \big)^{m-1}
\]
is a factorization of length $n_1+m-1$. If $W = (e_1+me_2)(-e_1)(-e_2)^m$, then we obtain again a factorization of length $n_1+m-1$. If $W = (e_1+me_2)(-e_1-e_2)e_2^{m+1}$, then
\[
V(-V) = W(-W) \big( (e_1+me_2)(-e_1+me_2) \big) \big( e_1(-e_1) \big)^{n_1-3} \big( e_2(-e_2) \big)^{m-2}
\]
is a factorization of length $n_1+m-2$.

Since $n_1 \ge 6$, $\mathsf L \big( V(-V) \big) \setminus \{2\}$ consists of seven elements but it is not an interval. Thus Proposition \ref{5.3} implies that $\mathsf L \big( V(-V) \big) \notin \mathcal L (C_2 \oplus C_{2n})$, a contradiction to $\mathcal L(G)\subset \mathcal L(C_2 \oplus C_{2n})$.
\end{proof}

\noindent
{\bf Acknowledgement.} We thank Qinghai Zhong for many helpful discussions and the anonymous reviewers for their careful reading.

\providecommand{\bysame}{\leavevmode\hbox to3em{\hrulefill}\thinspace}
\providecommand{\MR}{\relax\ifhmode\unskip\space\fi MR }
\providecommand{\MRhref}[2]{%
  \href{http://www.ams.org/mathscinet-getitem?mr=#1}{#2}
}
\providecommand{\href}[2]{#2}

\end{document}